\newtheorem{thm}{Theorem}
\newtheorem{conj}[thm]{Conjecture}
\newtheorem{lem}[thm]{Lemma}
\newtheorem{prop}[thm]{Proposition}
\newtheorem{cor}[thm]{Corollary}
\theoremstyle{remark}
\newtheorem{rem}[thm]{Remark}
\newtheorem{ex}[thm]{Example}
\numberwithin{thm}{section}
\newcommand{\ba}{\mathbf{a}}
\newcommand{\bF}{\mathbf{F}}
\newcommand{\bi}{\mathbf{i}}
\newcommand{\bV}{\mathbf{V}}
\newcommand{\bZ}{\mathbb{Z}}
\newcommand{\cB}{\mathcal{B}}
\newcommand{\hmu}{{\hat{\mu}}}
\newcommand{\la}{\lambda}
\newcommand{\lad}{{\lambda^\bullet}}
\newcommand{\La}{\Lambda}
\newcommand{\mud}{{\mu^{\bullet}}}
\newcommand{\nud}{{\nu^{\bullet}}}
\newcommand{\rhod}{{\rho^\bullet}}
\newcommand{\Spr}{\mathrm{Spr}}
\newcommand{\Xd}{{X^\bullet}}
\newcommand{\wbul}{{w^\bullet}}
\newcommand{\bY}{\mathbb{Y}}
\newcommand{\bC}{\mathbb{C}}
\newcommand{\bQ}{\mathbb{Q}}
\newcommand{\cK}{\mathcal{K}}
\newcommand{\cL}{\mathcal{L}}
\newcommand{\cW}{\mathcal{W}}
\newcommand{\cG}{\mathcal{G}}
\newcommand{\cF}{\mathcal{F}}
\newcommand{\cO}{\mathcal{O}}
\newcommand{\gl}{\mathfrak{gl}}
\newcommand{\bW}{\mathbf{W}}
\newcommand{\wt}{\mathrm{wt}}
\newcommand{\End}{\mathrm{End}}
\newcommand{\Fl}{\mathrm{Fl}}
\newcommand{\Hom}{\mathrm{Hom}}
\newcommand{\Sym}{\mathrm{Sym}}
\newcommand{\gd}{\gamma^\bullet}
\newcommand{\pair}[2]{\langle #1\,,\,#2\rangle}
\newcommand{\pairaux}[2]{\left(#1\,,\,#2\right)}
\newcommand{\fixit}[1]{\texttt{\color{red}{*** #1 ***}}}
\newcommand{\dom}{\trianglerighteq}
\newcommand{\pr}{\mathrm{pr}}
\newcommand{\Z}{\bZ}
\newcommand{\C}{\bC}
\newcommand{\al}{\alpha}
\newcommand{\alb}{\overline{\alpha}}
\newcommand{\Pb}{\overline{P}}
\newcommand{\Lambdab}{\overline{\Lambda}}
\newcommand{\fsl}{\mathfrak{sl}}
\newcommand{\del}{\partial}
\newcommand{\K}{\mathbb{K}}
\newcommand{\Lb}{\overline{L}}
\newcommand{\Utor}{\mathbf{\ddot U}_{q,t}(\fsl_{r})}
\newcommand{\gch}{\mathrm{ch}}
\newcommand{\cS}{\mathcal{S}}
\newcommand{\hcS}{\widehat{\mathcal{S}}}
\newcommand{\hR}{\widehat{R}}
\newcommand{\hbi}{\hat{\bi}}
\newcommand{\hba}{\hat{\ba}}
\newcommand{\Out}{\mathrm{Out}}
\newcommand{\cat}{\mathrm{cat}}
\newcommand{\red}{\mathrm{red}}
\newcommand{\id}{\mathrm{id}}
\renewcommand{\comment}[1]{}
\title[Quiver Hall-Littlewood functions]{Quiver Hall-Littlewood functions and Kostka-Shoji polynomials}
\date{\today}
\author{Daniel Orr}
\address{
Department of Mathematics (MC 0123),
460 McBryde Hall, Virginia Tech,
225 Stanger St.,
Blacksburg, VA 24061 USA}
\email{dorr@vt.edu}
\author{Mark Shimozono}
\address{
Department of Mathematics (MC 0123),
460 McBryde Hall, Virginia Tech,
225 Stanger St.,
Blacksburg, VA 24061 USA}
\email{mshimo@math.vt.edu}
\begin{document}

\maketitle

\begin{abstract}
For any triple $(i,a,\mu)$ consisting of a vertex $i$ in a quiver $Q$, a positive integer $a$, and a dominant $GL_a$-weight $\mu$, we define a quiver current $H^{(i,a)}_\mu$ acting on the tensor power $\Lambda^Q$ of symmetric functions over the vertices of $Q$. These provide a quiver generalization of parabolic Garsia-Jing creation operators in the theory of Hall-Littlewood symmetric functions. For a triple $(\bi,\ba,\mu(\bullet))$ of sequences of such data, we define the quiver Hall-Littlewood function $H^{\bi,\ba}_{\mu(\bullet)}$ as the result of acting on $1\in\Lambda^Q$ by the corresponding sequence of quiver currents. The quiver Kostka-Shoji polynomials are the expansion coefficients of $H^{\bi,\ba}_{\mu(\bullet)}$ in the tensor Schur basis. These polynomials include the Kostka-Foulkes polynomials and parabolic Kostka polynomials (Jordan quiver) and the Kostka-Shoji polynomials (cyclic quiver) as special cases.

We show that the quiver Kostka-Shoji polynomials are graded multiplicities in the equivariant Euler characteristic of a vector bundle (determined by $\mu(\bullet)$) on Lusztig's convolution diagram determined by the sequences $\bi,\ba$. For certain compositions of currents we conjecture higher cohomology vanishing of the associated vector bundle on Lusztig's convolution diagram. For quivers with no branching we propose an explicit positive formula for the quiver Kostka-Shoji polynomials in terms of catabolizable multitableaux.

We also relate our constructions to $K$-theoretic Hall algebras, by realizing the quiver Kostka-Shoji polynomials as natural structure constants and showing that the quiver currents provide a symmetric function lifting of the corresponding shuffle product. In the case of a cyclic quiver, we explain how the quiver currents arise in Saito's vertex representation of the quantum toroidal algebra of type $\fsl_r$.
\end{abstract}


\section{Introduction}
Consider the space $\La^Q = \otimes_{i\in Q_0} \La^{(i)}$ of \textit{quiver symmetric functions}, the tensor power of symmetric functions $\La$ with one factor for each vertex $i\in Q_0$ of a quiver $Q$, over a coefficient ring with a parameter $t_b$ for each arrow $b\in Q_1$ in the quiver. 
Given a triple $(i,a,\mu)$ where $i\in Q_0$ is a vertex, $a\in \Z_{>0}$, and $\mu\in X_+(GL_a)$ is a dominant integral $GL_a$-weight, we introduce the \textit{quiver current} $H^{(i,a)}_\mu\in \mathrm{End}(\La^Q)$. Quiver currents are generalizations of parabolic Garsia-Jing creation operators \cite{G} \cite{J} \cite{SZ}. Compositions of such operators are indexed by triples $(\bi,\ba,\mu(\bullet))$ where $\bi$ is a list of vertices, $\ba$ is a list of positive integers, and $\mu(\bullet)$ is a list of weights. When acting on the vacuum vector, such a composite operator creates the \textit{Hall-Littlewood quiver symmetric function}
$H^{\bi,\ba}_{\mu(\bullet)}\in\La^Q$. Their expansion coefficients $\cK^{\bi,\ba}_{\lad,\mu(\bullet)}(t_{Q_1})$ in the tensor Schur basis $s_\lad$ are polynomials in the arrow variables with integer coefficients. We call these the \textit{quiver Kostka-Shoji polynomials}. When all arrow variables are set to a single parameter, these are instances of Panyushev's generalized Kostka polynomials \cite{P}.

\subsection{Special cases}

\subsubsection{Single loop} Consider the quiver with one vertex and a loop.
If all weights in $\mu(\bullet)$ are single rows, $\cK^{\bi,\ba}_{\lad,\mu(\bullet)}(t_{Q_1})$ is the classical Kostka-Foulkes polynomial \cite{B} \cite{LS} \cite{Mac}. These polynomials give local intersection cohomology for the nullcone \cite{L:nullcone}.

If all weights are rectangles of a fixed width, $\cK^{\bi,\ba}_{\lad,\mu(\bullet)}(t_{Q_1})$ is a graded multiplicity at an irreducible $GL_n$-character in the coordinate ring of a nilpotent adjoint orbit closure in $\mathfrak{gl}_n$ \cite{S:poset} \cite{W}. 

If all weights are rectangles $\cK^{\bi,\ba}_{\lad,\mu(\bullet)}(t_{Q_1})$ is a graded multiplicity in tensor products of affine type A Kirillov-Reshetikhin modules \cite{KSS} \cite{ScW} \cite{S:poset} \cite{S:multi} \cite{S:affine} \cite{W}.

In general $\cK^{\bi,\ba}_{\lad,\mu(\bullet)}(t_{Q_1})$
is a parabolic Kostka polynomial \cite{Bro} \cite{SW}.

\subsubsection{$r$-vertex cyclic quiver}
This includes the case of the single loop quiver ($r=1$).
For a very specific sequence of currents, the polynomials $\cK^{\bi,\ba}_{\lad,\mu(\bullet)}(t_{Q_1})$ were studied by Finkelberg and Ionov \cite{FI} and earlier by Shoji in connection with Green's polynomials for complex reflection groups \cite{Sh,Sh2} (see Examples \ref{X:cycle} and \ref{X:cyclic vanishing}).

For $r=2$ these polynomials have an interpretation in intersection cohomology for the enhanced nullcone \cite{AH} and the mirabolic affine Grassmannian~\cite{FGT}.

\subsubsection{$A_2$ quiver}
For the quiver with two vertices $Q_0=\{0,1\}$ and a single edge $(0,1)$, for special sequences of currents a formula for $\cK^{\bi,\ba}_{\lad,\mu(\bullet)}(t_{Q_1})$ was proved in \cite{Cr}. The answer is a single power of $t_{01}$ times a truncated Littlewood-Richardson coefficient.

\subsection{Lusztig's convolution diagram}
Associated with a composition of currents is a vector bundle on Lusztig's convolution diagram \cite{L:conv}, which is itself a vector bundle over a product of partial flag varieties, one for each quiver vertex.\footnote{Lusztig's construction is defined for loopless quivers. We allow loops but add a condition which enforces nilpotence.} The quiver Kostka-Shoji polynomials are the arrow-graded isotypic components of the (virtual) quiver $GL$-module afforded by the Euler characteristic of this vector bundle. For certain compositions of currents we conjecture higher cohomology vanishing of the vector bundle on Lusztig's convolution diagram (Conjecture~\ref{conj:positivity}), which implies the positivity of quiver Kostka-Shoji polynomials. Higher vanishing is known in some cases by a result of Panyushev~\cite{P}, which we explain in Remark~\ref{R:vanish}.

\subsection{Combinatorics}
For quivers with no branching (that is, whose connected components are directed cycles or directed paths) we propose an explicit positive formula for the quiver Kostka-Shoji polynomials (Conjecture \ref{conj:cat}) in terms of \textit{catabolizable multitableaux}.
For the single loop quiver this reduces to \cite[Conjecture 27]{SW}.

\subsection{$K$-theoretic Hall algebras}
The direct sum of equivariant $K$-groups of quiver representation spaces over all possible dimension vectors carries a natural associative algebra structure, which we define in Section~\ref{S:shuffle} by adapting Kontsevich and Soibelman's definition of cohomological Hall algebras \cite{KS}. This turns out to be identical to Lusztig's convolution product \cite{L:conv}, just applied to equivariant $K$-groups rather than perverse sheaves (see Remark~\ref{R:lusztig-conv}). The multiplication in this ring, the (non-preprojective) $K$-theoretic Hall algebra, is given by a shuffle product directly analogous to that of \cite{KS}, however containing extra equivariance from the arrow parameters. 

We express the quiver Hall-Littlewood series as iterated shuffle products (Proposition~\ref{P:quiver-kostka-shuffle}), which leads to another interpretation of quiver Kostka-Shoji polynomials: they are structure constants with respect to tensor Schur polynomials in the $K$-theoretic Hall algebra. We also show that the quiver currents provide a natural symmetric function lifting of the multiplication in the $K$-theoretic Hall algebra (Proposition~\ref{P:quiver-current-shuffle}) and its preprojective $(q,t)$-version (Proposition~\ref{P:qt-quiver-current-shuffle}). Preprojective cohomological Hall algebras were introduced by Yang and Zhao \cite{YZ} for any algebraic oriented cohomology theory; earlier, the $K$-theoretic case of these algebras were studied extensively for single loop \cite{SV} \cite{FT} \cite{Ne-rev} and cyclic quivers~\cite{Ne}. We define the corresponding $(q,t)$-versions of our quiver currents in Section~\ref{SS:qt-preprojective}.

\subsection{Quantum toroidal $\fsl_r$}
For cyclic quivers the $(q,t)$-quiver currents with $a=1$ give the vertex representation \cite{Sa} of the (positive part of the) quantum toroidal algebra $\Utor$ \cite{GKV}. This connection is explained in Appendix~\ref{S:toroidal}. This provides a realization of Kostka-Shoji polynomials \cite{Sh2} in the representation theory of quantum toroidal algebras which we intend to pursue further in future work.

The vertex representation of $\Utor$ arises geometrically via the equivariant $K$-groups of Nakajima varieties for the cyclic quiver with one-dimensional framing space \cite{VV} \cite{Na} (see also \cite{FJMM} and \cite{T} for the connections between various $\Utor$-representations). It is interesting to speculate about the relationship between our symmetric function operators and Nakajima's geometric construction of quantum loop algebra representations for general quivers \cite{N}. However, the connection to quiver Hall-Littlewood functions in the case of cyclic quivers requires separate parameters $(q,t)$, followed by the specialization $q=0$ (see the right-hand side of \eqref{E:qt-current}). For general quivers we do not know the meaning of this specialization in the setting of Nakajima varieties.

\comment{
In \cite{Ne}, the preprojective $K$-theoretic Hall algebra for the cyclic quiver, namely the corresponding $(q,t)$-shuffle algebra, is shown to be isomorphic to $\Utor$. The $\Utor$-representations constructed by Nakajima's method \cite{N} are studied extensively in \cite{Ne}. This includes the vertex representation \cite{Sa}, as mentioned above, though our realization in terms of quiver symmetric functions $\Lambda^{Q}$ is different.

More generally, it is a special case of \cite[Theorem~5.4]{YZ} that the preprojective $K$-theoretic Hall algebra of any quiver acts on the equivariant $K$-groups of Nakajima quiver varieties, though an explicit presentation of the former algebra is not available in general. 

We intend to clarify this point in future work.
}

The $(q,t)$-version of the cyclic quiver current (for $a=1$, see \eqref{E:qt-current}) is the natural analog of certain operators generating part of the action of the elliptic Hall algebra (also known as ``quantum toroidal $\mathfrak{gl}_1$'') on symmetric functions~\cite{SV}, which are known to play a fundamental and important role in Macdonald theory \cite{GHT} \cite{BGLX} \cite{GN}. We hope that the $(q,t)$-currents will play an equally important role in a theory of cyclic quiver Macdonald symmetric functions, possibly those of \cite{Sh-mac}, forming a basis of cyclic quiver symmetric functions $\Lambda^{Q}$ involving both parameters $(q,t)$ and which specialize to the quiver Hall-Littlewood functions at $q=0$.\footnote{Another candidate for cyclic quiver Macdonald symmetric functions would seem to be given by the wreath Macdonald polynomials of \cite{Hai} \cite{BF}; however, remarks in \cite[\S7.2.4]{Hai} indicate that these polynomials at $q=0$ do not coincide with Shoji's \cite{Sh}.}

\subsubsection*{Acknowledgements}
We thank Michael Finkelberg, Mee Seong Im, and Eric Vasserot for stimulating discussions and helpful correspondence.
The authors were partially supported by NSF grant DMS-1600653.

\section{Quiver Hall-Littlewood series}

\subsection{Quiver}\label{SS:data}

Let $Q=(Q_0,Q_1)$ be a quiver (finite directed graph) with vertex set $Q_0$ and arrow set $Q_1$. We write $ha\in Q_0$ (resp. $ta\in Q_0$) for the head (resp. tail) of the arrow $a\in Q_1$: pictorially $ta \overset{a}{\rightarrow} ha$. Loops ($a\in Q_1$ such that $ha=ta$) and multiple edges ($a\ne b\in Q_1$ with $ha=hb$ and $ta=tb$) are allowed.

Let $\Z^{Q_0} = \bigoplus_{i\in Q_0} \bZ f^{(i)}$ be the lattice of virtual dimension vectors. We write its elements as $\nud = \sum_{i\in Q_0} \nu^{(i)} f^{(i)}$ where $f^{(i)}\in \bZ^{Q_0}$ is the $i$-th standard basis vector and $\nu^{(i)}$ is the coefficient of $\nu$ at $f^{(i)}$. Let $\bV = \bV_\nud = \bigoplus_{i\in Q_0} \bV^{(i)}$ be a $Q_0$-graded $\bC$-vector space of dimension $\nud$, that is, $\dim \bV^{(i)} = \nu^{(i)}$ for all $i\in Q_0$. Let $E=E_\nud = \prod_{a\in Q_1} \Hom_\bC(\bV^{(ta)},\bV^{(ha)})$ be the representation space of $Q_0$-graded dimension $\nu$.\footnote{We write linear functions as matrices acting from the left of their arguments, which are column vectors. So the elements of $\Hom_{\bC}(\bV^{ta},\bV^{ha})$ are matrices with $\dim \bV^{ha}$ rows and $\dim \bV^{ta}$ columns.}
 For $i\in Q_0$ let $G^{(i)}=GL(\bV^{(i)})\cong GL_{\nu^{(i)}}$ be the general linear group acting at vertex $i$. Let $G^\bullet = G_\nud = \prod_{i\in Q_0} G^{(i)}$ act  on $E$ by $(g_i\mid i\in Q_0) \cdot (\phi_a\mid a\in Q_1)=(g_{ha} \phi_a g_{ta}^{-1}\mid a\in Q_1)$.

\subsection{Torus weights}\label{SS:torus weights}
Let $T^{Q_1} = (\bC^*)^{Q_1}$ be an algebraic torus with a copy of $\bC^*$ for each arrow $a\in Q_1$. We have $R(T^{Q_1}) \cong \bZ[t_a^{\pm1}\mid a\in Q_1]$ such that the exponential weight of the action of the $a$-th copy of $\C^*$ in $T^{Q_1}$ on $\Hom_{\bC}(\bV^{ta},\bV^{ha})$ is $t_a^{-1}$. We call the $t_a$ \textit{arrow variables}. The action of $T^{Q_1}$ commutes with the action of $G^\bullet$ on $E$. We write $t_{Q_1}$ to refer to the collection of all arrow variables and set $\Z[t_{Q_1}^{\pm 1}]=\bZ[t_a^{\pm1}\mid a\in Q_1]$.

\begin{rem} \label{R:no multiple arrows} If $Q$ has no multiple arrows, that is, for every $(i,j)\in Q_0^2$ there is at most one $b\in Q_1$ such that $tb=i$ and $hb=j$, then we write $t_{i,j}$ for $t_b$ if $b$ is the unique arrow going from $i$ to $j$.
\end{rem}

For each $i\in Q_0$ let $T^{(i)}\subset G^{(i)}$ be the standard maximal torus. Let $T^\bullet=T_{\nu^\bullet}\cong \prod_{i\in Q_0} T^{(i)}$ be the maximal torus in $G^\bullet$. For $i\in Q_0$, let the exponentials of the weights of $T^{(i)}$ be denoted $x^{(i)}_1,x^{(i)}_2,\dotsc,x^{(i)}_{\nu^{(i)}}$.

Let $\mathcal{G} = G^\bullet \times T^{Q_1}$ and $\mathcal{T}=T^\bullet \times T^{Q_1}$. We also write $G_{\nu^{\bullet}}=G^\bullet$ and $\cG_{\nu^\bullet}=\cG$ when it is necessary to distinguish between different dimension vectors.

We use notation such as $X(G)$ and $X_+(G)$ for the integral weights and dominant integral weights. We have a natural identification $X(G^\bullet)=X(GL(\bV))$, since $T^\bullet$ is  a maximal torus in both $GL(\bV)$ and its Levi subgroup $G^\bullet$.

For $\la\in X(\mathcal{G})$ we write $\overline{\la}$ for its image under the forgetful map $X(\mathcal{G}) \to X(G^\bullet)$.

We make the identification $R(\mathcal{T})\cong R(T^\bullet)\otimes_\Z \Z[t_{Q_1}^{\pm 1}]$ and also define its ``completion'' $\hR(\mathcal{T}) = R(T^\bullet)\otimes_\Z \Z((t_{Q_1}))$ where $\Z((t_{Q_1}))$ is the field of formal Laurent series in the variables $t_{Q_1}$. Similarly, we define $\hR(\mathcal{G})= R(G^\bullet)\otimes_{\Z}\Z((t_{Q_1}))$. 

\subsection{Sequences of currents: preview}\label{SS:preview}
The space of quiver symmetric functions $\La^Q = \bigotimes_{i\in Q_0} \La^{(i)}$ is by definition the $Q_0$-fold tensor power of the symmetric function algebra $\La$ over the ring $R(T^{Q_1})$. For a triple $(i,a,\mu)$ with $i\in Q_0$, $a\in \bZ_{>0}$, and $\mu\in\bZ^a$ a weakly decreasing sequence of integers (dominant $GL_a$-weight), we shall define an operator $H^{(i,a)}_\mu$ on $\La^Q$ that we call a \textit{quiver current} (see \eqref{E:H-current}).\footnote{The usual notion of current in the theory of vertex algebras is a generating function of such operators for $i$ fixed, $a=1$, and summing over all integers $\mu$.} The operator $H^{(i,a)}_\mu$ is a quiver analogue of a parabolic Garsia-Jing operator \cite{G} \cite{J} \cite{SZ} (see \eqref{E:H-alternate}).

Consider a sequence of triples
\begin{align}\label{L:the triples}
	(i_1,a_1,\mu(1)),(i_2,a_2,\mu(2)),\dotsc,(i_m,a_m,\mu(m)),
\end{align}
or equivalently a triple of sequences
\begin{align}
\label{E:seq}
	&(\bi,\ba,\mu(\bullet)) \qquad\text{where} \\
	\bi &= (i_1,i_2,\dotsc,i_m) \\
	\ba &= (a_1,a_2,\dotsc,a_m) \\
	\mu(\bullet) &= (\mu(1),\mu(2),\dotsc,\mu(m))
\end{align}
where we emphasize that each $\mu(j)\in X_+(GL_{a_j})$ is a dominant integral weight.

The \textit{quiver Hall-Littlewood symmetric function} $H^{\bi,\ba}_{\mu(\bullet)}\in \La^Q$ is defined by applying a sequence of currents to the vacuum vector $1$:
\begin{align}\label{E:quiver-HL-intro}
	H^{\bi,\ba}_{\mu(\bullet)} = H^{(i_1,a_1)}_{\mu(1)} \dotsm H^{(i_m,a_m)}_{\mu(m)} \cdot 1.
\end{align}
For a $Q_0$-tuple of partitions $\lad\in\bY^{Q_0}$ let $s_\lad = \bigotimes_{i\in Q_0} s_{\la^{(i)}}[X^{(i)}]\in \La^Q$ be the basis of tensor Schur functions.

The \textit{quiver Kostka-Shoji} polynomials $\cK^{\bi,\ba}_{\lad,\mu(\bullet)}(t_{Q_1})$ are the polynomials in the arrow variables defined by the coefficients of the quiver Hall-Littlewood symmetric function at the tensor Schur basis:
\begin{align}\label{E:Kostka Shoji}
	H^{\bi,\ba}_{\mu(\bullet)} = \sum_{\lad} \cK^{\bi,\ba}_{\lad,\mu(\bullet)}(t_{Q_1}) s_{\lad}.
\end{align}

\subsection{Indexing}
Two forms of indexing will be used for almost all objects. \textit{Sequence notation} comes directly from the sequences \eqref{E:seq} and uses lowered parenthesis notation, e.g., $\mu(k)$ for $1\le k\le m$. The other is \textit{vertex notation}, based on grouping terms in these sequences according to the vertex. Vertex notation uses parenthesized superscripts, e.g., $G^{(i)}$ for $i\in Q_0$.

\subsection{Standard big partial flag}\label{SS:big partial flag}
The pair $(\bi,\ba)$ defines a sequence of dimension vectors and a standard ``big partial flag". Starting with the zero dimension vector,
add dimension $a_m$ at vertex $i_m$. Then then add dimension $a_{m-1}$ at vertex $i_{m-1}$, and so on.\footnote{This is consistent with the order in which the operators are applied in \eqref{E:quiver-HL-intro}.} That is, we consider the dimension vectors $0$, $a_m f^{(i_m)}$, $a_m f^{(i_m)} + a_{m-1} f^{(i_{m-1})}$, etc. The final dimension vector is denoted
\begin{align}\label{E:nu of i a}
\nud &= \nud(\bi,\ba) = \sum_{k=1}^m a_k f^{(i_k)}\qquad\text{or equivalently} \\
\nu^{(i)} &= \sum_{\substack{ k \\ i_k=i}} a_k.
\end{align}
We define $a^{(i)}$ to be the subsequence of $\ba$ consisting of the $a_k$ such that $i_k=i$.

\begin{ex} \label{X:ia}
Let $Q_0=\{0,1\}$ and $Q_1 = \{(0,0),(0,1)\}$. Let $(\bi,\ba)$ be given by
\begin{align*}
	\begin{array}{|c||c|c|c|c|c|} \hline
		k                & 1 & 2 & 3 & 4 & 5 \\ \hline \hline
		i_k              & 0 & 0 & 1 & 0 & 1  \\ \hline
		a_k              & 1 & 1 & 1 & 1 & 2 \\ \hline
	\end{array}
\end{align*}
We have $a^{(0)} = (1,1,1)$, $\nu^{(0)}=1+1+1=3$, $a^{(1)}=(1,2)$, and $\nu^{(1)}=1+2=3$.
\end{ex}

{}From now on fix $(\bi,\ba)$ and $\nud=\nud(\bi,\ba)$.

For $i\in Q_0$ let $\cB^{(i)} =\{e^{(i)}_1,\dotsc,e^{(i)}_{\nu^{(i)}}\}$  be a fixed $T^{(i)}$-weight basis of $\bV^{(i)}$
$$
\bV^{(i)} = \bC e^{(i)}_1 \oplus \bC e^{(i)}_2 \oplus \dotsm \oplus \bC e^{(i)}_{\nu^{(i)}}.
$$
with corresponding set of exponential weights  $x^{(i)}=(x_1^{(i)},x_2^{(i)},\dotsc,x_{\nu^{(i)}}^{(i)})$.

We now define the sequence notation for the above.
For all $1\le k\le m$ let $\cB(k)$ consist of $a_k$ consecutive elements of $\cB^{(i_k)}$, 
such that if $i_k=i_\ell=i$ with $k<\ell$ then the elements of $\cB(k)$ precede those of $\cB(\ell)$ in $\cB^{(i)}$. We denote by 
$e(k)_1, e(k)_2,\dotsc, e(k)_{a_k}$ the elements of $\cB(k)$ and denote by $x(k)$ the set of $x$ variables associated with the basis elements of $\cB(k)$.

\begin{ex} \label{X:standard flag} Continuing the previous example we have $\cB^{(0)} = \{e^{(0)}_1|e^{(0)}_2|e^{(0)}_3\}$, $\cB^{(1)} = \{e^{(1)}_1|e^{(1)}_2,e^{(1)}_3\}$ where the vertical lines break each $\cB^{(i)}$ into the subsets $\cB(k)$ for which $i_k=i$.
\begin{align*}
\begin{array}{|c||c|c|c|c|c|}\hline
k      & 1 & 2 & 3 & 4 & 5 \\ \hline
& e(1)_1 & e(2)_1 & e(3)_1 & e(4)_1 & e(5)_1, e(5)_2 \\ \hline
\cB(k) & e^{(0)}_1 & e^{(0)}_2 & e^{(1)}_1 & e^{(0)}_3 & e^{(1)}_2,e^{(1)}_3 \\ \hline
x(k) & x^{(0)}_1 & x^{(0)}_2 & x^{(1)}_1 & x^{(0)}_3 & x^{(1)}_2,x^{(1)}_3 \\ \hline
\end{array}
\end{align*}
\end{ex}

Define $\bV(k)$ to be the $Q_0$-graded subspace of $\bV$ whose basis is $\bigsqcup_{\ell>k} \cB(\ell)$ for $1\le k\le m$. We have
\begin{align*}
	\bV = \bV(0) \supset \bV(1) \supset\dotsm\supset \bV(m-1)\supset \bV(m)=0.
\end{align*}
We call $V(\bullet)$ the \textit{standard big partial flag}.

\subsection{Variety $\Fl_{\bi,\ba}$ of big partial flags}
A flag of type $(\bi,\ba)$ is a decreasing sequence $\bF(\bullet)$ of $Q_0$-graded vector subspaces
\begin{align*}
	\bV = \bF(0) \supset \bF(1) \supset \bF(2)\dotsm \supset \bF(m)=0
\end{align*}
such that for all $1\le k\le m$:
\begin{align} \label{E:big flag}
	\dim (\bF(k)^{(i)}/\bF(k-1)^{(i)}) = \begin{cases}
		a_k & \text{if $i_k=i$} \\
		0 & \text{otherwise.}
		\end{cases}
\end{align}
Let $\Fl_{\bi,\ba}$ be the variety of flags of type $(\bi,\ba)$.
It has basepoint $\bV(\bullet)$.

\subsection{Multi-partial flags} \label{SS:multi partial flags} 
The projections $\bV\to \bV^{(i)}$ for $i\in Q_0$ induce a $G^\bullet$-equivariant isomorphism
\begin{align*}
	\Fl_{\bi,\ba} \overset{\sim}{\longrightarrow} \prod_{i\in Q_0} \Fl_{a^{(i)}}(\bV^{(i)})
\end{align*}
where $\Fl_{a^{(i)}}(\bV^{(i)})$ is a variety of partial flags in $\bV^{(i)}$ of dimension jumps given by the sequence $a^{(i)}$.

\subsection{Lusztig's iterated convolution diagram \cite[\S1.5]{L:conv}}
Let $\phi \in E$. Say that a flag $\bF(\bullet)\in \Fl_{\bi,\ba}$ is $\phi$-stable if
\begin{align}
	\phi_a(\bF(k)^{(ta)}) \subset \bF(k)^{(ha)}\qquad\text{for all $a\in Q_1$, $1\le k\le m$.}
\end{align}
Say that $\bF(\bullet)$ is strictly $\phi$-stable if
\begin{align}
	\phi_a(\bF(k-1)^{(ta)}) \subset \bF(k)^{(ha)}\qquad\text{for all $a\in Q_1$, $1\le k\le m$.}
\end{align}
Say that $\phi$ is \textit{nilpotent} if there is an $N$ such that for any directed path $b_N\dotsm b_2b_1$\footnote{This ugly contravariance is a consequence of writing linear functions on the left of their arguments.} in $Q$ with $b_j\in Q_1$, the composition $\phi_{b_1}\phi_{b_2}\dotsm\phi_{b_N}$ is the zero map. 

\begin{lem} \cite[Lemma 1.8]{L:conv}. Let $\phi\in E$.
\begin{enumerate}
	\item[(a)] For quivers without loops, for $\bF(\bullet)\in\Fl_{\bi,\ba}$, $\bF(\bullet)$ is $\phi$-stable if and only if it is strictly $\phi$-stable. Moreover $\phi$ is nilpotent if and only if there is a $\phi$-stable flag $\bF(\bullet)\in \Fl_{\bi,\ba}$.
	\item[(b)] For quivers with loops, if there is a strictly $\phi$-stable flag $\bF(\bullet)\in\Fl_{\bi,\ba}$ then $\phi$ is nilpotent.
\end{enumerate}
\end{lem}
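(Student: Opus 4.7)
The plan is to prove strict $\phi$-stability implies $\phi$-stability unconditionally, to establish the reverse implication in the loopless case, and then to deduce the two characterizations of nilpotence. For strict $\Rightarrow$ stable (valid for any $Q$): at $1\le k<m$, strict stability at index $k+1$ gives $\phi_a(\bF(k)^{(ta)})\subset \bF(k+1)^{(ha)}\subset \bF(k)^{(ha)}$ since $\bF(k+1)\subset \bF(k)$, and at $k=m$ the condition is automatic because $\bF(m)=0$. For the reverse implication in (a), I would examine the map induced on each successive quotient: combining $\phi$-stability at indices $k-1$ and $k$ (the $k-1=0$ case is trivial since $\bF(0)=\bV$), $\phi_a$ descends to $\bar\phi_a\colon (\bF(k-1)/\bF(k))^{(ta)}\to (\bF(k-1)/\bF(k))^{(ha)}$. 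By the flag type condition \eqref{E:big flag}, $\bF(k-1)/\bF(k)$ is concentrated at vertex $i_k$, so $\bar\phi_a$ can be nonzero only when $ta=ha=i_k$, i.e., when $a$ is a loop at $i_k$. The loopless hypothesis excludes this, yielding $\bar\phi_a=0$, which is precisely strict stability at index $k$.

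For part (b) and the $\Leftarrow$ direction of the nilpotence claim in (a), I would iterate strict stability along a directed path $v_0\overset{b_m}{\to}v_1\overset{b_{m-1}}{\to}\cdots\overset{b_1}{\to}v_m$ of length $m$ in $Q$: an induction on $j$ yields $\phi_{b_{m-j+1}}\cdots\phi_{b_m}(\bV^{(v_0)})\subset \bF(j)^{(v_j)}$, so at $j=m$ we land in $\bF(m)^{(v_m)}=0$. Any composition along a directed path of length at least $m$ therefore annihilates $\bV$, so $\phi$ is nilpotent with $N=m$. This uses only strict stability, so it directly gives (b) and, via the equivalence already proved, one direction of the nilpotence claim in (a).

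For the $\Rightarrow$ direction of the nilpotence equivalence in (a)---asserting that nilpotence of $\phi$ forces $\Fl_{\bi,\ba}$ to contain a $\phi$-stable flag---I would argue by induction on $m$: produce a $\phi$-invariant $Q_0$-graded subspace $\bW\subset \bV$ of dimension $a_m f^{(i_m)}$ concentrated at vertex $i_m$, then apply induction to the quotient representation $\bV/\bW$ with the truncated sequence $(i_1,\ldots,i_{m-1})$, $(a_1,\ldots,a_{m-1})$. This is the main obstacle: $\bW$ must lie in the intersection of the kernels of the $\phi_a$ with $ta=i_m$ (the socle component at vertex $i_m$), and the nontrivial task is arranging the prescribed multiplicity $a_m$ of such vectors so that the inductive setup on the quotient remains consistent. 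In geometric terms this amounts to surjectivity of the natural map from Lusztig's convolution variety onto the nilpotent locus in $E$, whose proof leverages nilpotence of $\phi$ together with the Jordan--H\"older structure specific to loopless quiver representations, as in the original argument of \cite{L:conv}.
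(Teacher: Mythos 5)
The paper offers no proof of this lemma (it is quoted from Lusztig), so I will assess your argument on its own terms. Your first three steps are correct and complete: strict stability implies stability for any quiver; the quotient argument shows the converse for loopless quivers (the induced map $\bar\phi_a$ on $\bF(k-1)/\bF(k)$, which is concentrated at $i_k$, can only be nonzero for a loop at $i_k$); and the iteration along a directed path of length $m$ correctly lands in $\bF(m)=0$, giving nilpotence with $N=m$. This cleanly establishes part (b) and all of part (a) except the implication ``nilpotent $\Rightarrow$ a $\phi$-stable flag exists.''

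That last implication is where the gap lies, and it is not merely an omitted technical step: the induction you set up cannot work, because the statement is false for a \emph{fixed} type $(\bi,\ba)$ and must be read (as in Lusztig's original Lemma 1.8) with an existential quantifier over the type. Concretely, take the $A_2$ quiver $0\to 1$ with $\bV^{(0)}=\bV^{(1)}=\C$ and $\phi_{01}$ an isomorphism; this $\phi$ is nilpotent (there are no composable paths of length $2$), yet for $\bi=(1,0)$, $\ba=(1,1)$ the unique flag in $\Fl_{\bi,\ba}$ has $\bF(1)=\bV^{(0)}$, and $\phi$-stability would force $\phi_{01}(\bV^{(0)})\subset\bF(1)^{(1)}=0$. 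In particular your proposed $\bW$ --- a $\phi$-invariant subspace of dimension $a_m f^{(i_m)}$ concentrated at the prescribed vertex $i_m$ and lying in the socle --- need not exist: the socle of a nonzero nilpotent representation is nonzero, but you cannot dictate at which vertex it lives or how large it is there. The correct argument chooses the flag type adaptively: pick any vertex where the socle is nonzero, split off a subspace there, pass to the quotient, and induct; this produces a strictly stable flag of \emph{some} type. If you intend to prove the lemma as an if-and-only-if for a fixed $(\bi,\ba)$, you are proving a false statement; if you intend the existential version, the deferral to ``the original argument of \cite{L:conv}'' leaves the only genuinely nontrivial direction unproved.
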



Let $\cW=\cW_{\bi,\ba} \subset \Fl_{\bi,\ba} \times E$ be the variety of pairs $(\bF(\bullet), \phi)$ such that $\bF(\bullet)$ is strictly $\phi$-stable. For quivers without loops this is Lusztig's convolution diagram \cite[\S 1.5]{L:conv}.

The projection 
\begin{align}\label{E:p}
	\cW \overset{p}{\longrightarrow} \Fl_{\bi,\ba}
\end{align}
gives $\cW$ the structure of a $G^\bullet$-homogeneous vector bundle \cite[Lemma 1.6]{L:conv}.
The other projection map $\mathrm{Spr}=\mathrm{Spr}_{\bi,\ba}:\cW\to E$ is proper; it is known to give a desingularization of its image when $Q$ is a Dynkin quiver \cite{R} or a cyclic quiver ~\cite{ADK}~\cite{Sch}.

\begin{ex} \label{X:nilpotent adjoint orbit}
For the single loop quiver, $\bi$ is unnecessary, the dimension vector is just a dimension, the sequence $\ba$ gives the diagonal block sizes for a standard parabolic $P \subset G$, $\cW=T^*(G/P)$, and $\Spr$ is the Springer desingularization of the closure of an adjoint orbit of a nilpotent. If all $a_k$ are $1$ then the nilpotent is principal. If $m=1$ (there is only one step in the sequence $\ba$) then the nilpotent is zero.
\end{ex}

\begin{ex} \label{X:cycle}
For the directed cycle quiver $Q_0 = \bZ/r\bZ$, $Q_1=\{(i,i+1) \mid i\in Q_0\}$ with $r\ge 2$ and a positive integer $n$, define $(\bi,\ba)$ by $m=rn$, $\bi=(0,1,2,\dotsc)$ where vertices are considered modulo $r$, and $a_k=1$ for $1\le k\le m$. Then $Z_{\bi,\ba}$ is the vector bundle of Finkelberg and Ionov \cite{FI} (see also Example~\ref{X:cyclic vanishing} below).
\end{ex}

\begin{rem}
The generality afforded by Lusztig's convolution diagrams $\cW_{\bi,\ba}$ is far greater than that of our earlier (unpublished) work \cite{OS:unpublished} which initiated the study of quiver Hall-Littlewood functions and Kostka-Shoji polynomials in a more restricted setting. This involved a total order $i_1<\dotsm<i_r$ on $Q_0$ compatible with an acyclic subquiver $\hat{Q}$ of $Q$. This is recovered as a special case by choosing the data $\bi=(i_1,\dotsc,i_r,i_1,\dotsc,i_r,\dotsc)$, with each vertex appearing the same number of times, and $\ba=(1,1,1,\dotsc)$.
\end{rem}

\subsection{Vector bundle weights}\label{SS:bundle-weights}
Let $W\subset E = E_{\nu^\bullet(\bi,\ba)}$ be the fiber of $\cW$ over the basepoint $\bV(\bullet)\in \Fl_{\bi,\ba}$. It carries an action of $P^\bullet = \prod_{i\in Q_0} P^{(i)}$ where $P^{(i)}\subset G^{(i)}$ is the parabolic that stabilizes the projection of the standard big partial flag $\bV(\bullet)$ to $\bV^{(i)}$. We have the partial flag varieties
$G^{(i)}/P^{(i)}\cong \Fl_{a^{(i)}}(\bV^{(i)})$. Note that $P^\bullet$ is {\em lower triangular} with respect to our ordered basis of $\bV$.


$W$ has a $\mathcal{T}$-weight basis $R_{\bi,\ba}$ consisting of 
vectors $\alpha^a_{p,q}(k,\ell)\in W$ for
\begin{enumerate}
	\item each $k < \ell$
	\item each arrow $a \in Q_1$ such that $ha=i_k$ and $ta=i_\ell$
	\item each $p,q$ such that $1\le p\le a_k$ and $1\le q\le a_\ell$.
\end{enumerate}
The vector $\alpha^a_{p,q}(k,\ell)$ has exponential weight
\begin{align}\label{E:root weight}
	\exp(\wt(\alpha^a_{p,q}(k,\ell))) = t_a^{-1} x(\ell)_q / x(k)_p.
\end{align}
We shall use the shorthand
\begin{align}\label{E:t root}
  t_\alpha = t_a \qquad\text{for $\alpha=\alpha^a_{p,q}(k,\ell)\in R_{\bi,\ba}$.}
\end{align}
We have
\begin{align*}
	\cW = G^\bullet \times^{P^\bullet} W.
\end{align*}

\begin{ex}\label{X:bundle} Below is a picture of $R_{\bi,\ba}$ for the data of Example~\ref{X:ia}.
\begin{align*}
\begin{array}{|c||c|c|c|c|cc|}\hline 
i_k \backslash i_\ell  & 0 & 0 & 1 & 0 & \multicolumn{2}{c|}{1} \\ \hline  \hline
0 & & * & * & * & * & *  \\ \hline
0 &  &   & * & * & * & * \\ \hline
1 & &   &   &   &   &   \\ \hline
0 &  &   &   &   & * & * \\ \hline
\multirow{2}{*}{1}&   &   &   &   & & \\ 
 &  &  &   &  & & \\ \hline
\end{array}
\end{align*}
Note that the rows with $i_k=1$ have no roots because the vertex $1$ has no outgoing arrows.
\end{ex}

\begin{ex} \label{X:double arrow} Let $Q_0=\{0,1\}$ and $Q_1=\{a,b\}$ with $ha=hb=0$ and $ta=tb=1$. Let $\bi=(0,1)$ and $\ba=(2,1)$. Then $R_{\bi,\ba}=\{\alpha^a_{1,1}(1,2),\alpha^a_{2,1}(1,2)\alpha^b_{1,1}(1,2),\alpha^b_{2,1}(1,2) \}$ can be depicted by
\begin{align*}
\begin{array}{|c||cc|c|}\hline 
i_k \backslash i_\ell  & \multicolumn{2}{c|}{0}&1 \\ \hline  \hline
\multirow{2}{*}{0}& \hphantom{x}   &  & 2\\ & & &2 \\ \hline
1 & & & \\ \hline
\end{array}
\end{align*}
The entries $2$ indicate that $E$ consists of two linear maps $\C^2\to\C^1$.
\end{ex}

\subsection{Twisting $\cW$ by a vector bundle}\label{SS:vector bundle}
Given $(\bi,\ba)$, consider a sequence of $m$ weights $\mu(\bullet)=(\mu(1),\mu(2),\dotsc,\mu(m))$
where $\mu(k) \in X_+(GL_{a_k})$. For $i\in Q_0$ let $\mu(\bullet)^{(i)}$ denote the subsequence of weights $\mu(k)$ for which $i_k=i$, and let $\mu^{(i)}\in \bZ^{\nu^{(i)}}=X(GL(\bV^{(i)}))$ be the (not necessarily dominant) weight obtained by concatenating the weights in the sequence $\mu(\bullet)^{(i)}$. We will use the notation $\mu^\bullet$ to denote the $Q_0$-tuple of weights $(\mu^{(i)}\mid i\in Q_0)$.

\begin{ex} For the running example (Example~\ref{X:ia}), let $$\mu(\bullet)=((3),(2),(4),(4),(2,1)).$$ Then
$\mu(\bullet)^{(0)} = ((3),(2),(4))$, $\mu^{(0)}=(3,2,4)$,
$\mu(\bullet)^{(1)} = ((4),(2,1))$, and $\mu^{(1)}=(4,2,1)$;
$\mu^\bullet$ is the $Q_0$-tuple $((3,2,4),(4,2,1))$.
\end{ex}

For each $i\in Q_0$ the sequence $\mu(\bullet)^{(i)}$ defines a dominant weight for the standard Levi subgroup $L^{(i)}\subset P^{(i)}\subset G^{(i)}$ with diagonal block sizes given by $a^{(i)}$ (see \S \ref{SS:preview}), where $P^{(i)}$ is defined as in \S\ref{SS:bundle-weights}.
Let $\Fl(\bV^{(i)})$ be the variety of complete flags in $\bV^{(i)}$. Let $\cL_{\mu(\bullet)}$ be the $G^\bullet$-equivariant vector bundle on $\prod_{i\in Q_0} \Fl(\bV^{(i)})$ whose weight at the basepoint of $\Fl(\bV^{(i)})$ is $\mu^{(i)}$. We may identify $\Fl(\bV^{(i)})$ with $G^{(i)}/B^{(i)}_-$, where $B^{(i)}_-$ is the {\em lower triangular} Borel subgroup with respect to the ordered basis $\cB^{(i)}$. Let $B^\bullet_-=\prod_{i\in Q_0} B^{(i)}_-$. Then $\cL_{\mu(\bullet)}=G^\bullet\times^{B^\bullet_-}\C_{\mu^\bullet}$.

Let $r: \prod_{i\in Q_0} \Fl(\bV^{(i)})  \to \prod_{i\in Q_0} \Fl_{a^{(i)}}(\bV^{(i)})$ be the product of projections, which for the $i$-th factor maps the complete flag variety $\Fl(\bV^{(i)})$ to the partial flag variety $\Fl_{a^{(i)}}(\bV^{(i)})$.
Define the vector bundle $\cW^{\mu(\bullet)}$ on $\cW$ by
\begin{align}
	\cW^{\mu(\bullet)} = p^*(r_*(\cL_{\mu(\bullet)})).
\end{align}

\subsection{Quiver Hall-Littlewood series}
We define the {\em quiver Hall-Littlewood series} $\chi_{\mu(\bullet)}^{\bi,\ba}$ to be the $\mathcal{G}$-equivariant Euler characteristic of the global sections functor applied to $\cW^{\mu(\bullet)}$:
\begin{align*}
	\chi_{\mu(\bullet)}^{\bi,\ba} &= \sum_{p\ge0} (-1)^p \mathrm{ch}_{\mathcal{G}} H^p(\cW,\cW^{\mu(\bullet)}).
\end{align*}
We can compute this as follows. Let $J^\bullet$ be the antisymmetrization operator over the Weyl group $S^\bullet = \prod_{i\in Q_0} S^{(i)}$ where $S^{(i)}$ is the symmetric group $S_{\nu^{(i)}}$, the Weyl group for $GL(\bV^{(i)})$:
\begin{align}
	J^\bullet = \sum_{\wbul\in S^\bullet} (-1)^{\wbul} \wbul.
\end{align}
Let $\rho^\bullet$ be the $Q_0$-tuple of weights $\rho^{(i)}=\rho_{\nu^{(i)}}$ where $\rho_n=(n-1,n-2,\dotsc,1,0)$ is a $GL_n$-weight. For $f\in \hR(\mathcal{T})$ define the Demazure operator
\begin{align}\label{E:Dw0}
	D_{w_0^{\bullet}}(f) = J^{\bullet}(x^{\rho^\bullet})^{-1} J^{\bullet}(x^{\rho^\bullet} f).
\end{align}
Let $x^{\mu(\bullet)}$ be the monomial
\begin{align}\label{E:mu(bullet)}
	x^{\mu(\bullet)} &= \prod_{k=1}^m x(k)^{\mu(k)} 
\end{align}
where $x(k)$ is defined in \S \ref{SS:big partial flag}. Finally, let
\begin{align}\label{E:Bia}
	B_{\bi,\ba} &= \mathrm{ch}_{\mathcal{T}} \,\Sym(W^\vee)\\
\notag	&= \prod_{1\le k<\ell\le m} \prod_{\substack{a \in Q_1 \\ ta = i_k \\ ha=i_\ell}} \prod_{\substack{(y,z)\in x(k)\times x(\ell)}}  (1 - t_a y/z  )^{-1}
\end{align}
where $\Sym(W^\vee)$ is the symmetric algebra of the dual of $W$. Then:

\begin{prop} The quiver Hall-Littlewood series is given by:
\begin{align}\label{E:chi-D}
\chi_{\mu(\bullet)}^{\bi,\ba} &= D_{w_0^\bullet} (x^{\mu(\bullet)} B_{\bi,\ba}).
\end{align}
\end{prop}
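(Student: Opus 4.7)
The plan is to factor the Euler characteristic through the maps $p$ and $r$ and reduce to the Weyl-Demazure character formula on a product of full flag varieties.

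First, push down along $p$: since $p:\cW = G^\bullet\times^{P^\bullet}W\to\Fl_{\bi,\ba}$ is a $\cG$-equivariant vector bundle, $p_*\cO_\cW$ is the associated bundle of $\Sym(W^\vee)$ (with no higher direct images), and the projection formula gives
\begin{align*}
\chi_{\mu(\bullet)}^{\bi,\ba} = \chi_\cG\bigl(\Fl_{\bi,\ba},\, r_*\cL_{\mu(\bullet)}\otimes (G^\bullet\times^{P^\bullet}\Sym(W^\vee))\bigr).
\end{align*}

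Next, move upstairs along $r:\prod_{i\in Q_0}\Fl(\bV^{(i)})\to\Fl_{\bi,\ba}$. Its fibers are products of full flag varieties of the Levi factors $L^{(i)}\subset P^{(i)}$, on which $\cL_{\mu(\bullet)}$ has $L^{(i)}$-dominant weight $\mu(\bullet)^{(i)}$ under the lower-triangular Borel convention; Borel-Weil on the fibers gives $R^i r_*\cL_{\mu(\bullet)}=0$ for $i>0$. Using the projection formula together with the identifications $\prod_i\Fl(\bV^{(i)}) = G^\bullet/B^\bullet_-$ and $r^*(G^\bullet\times^{P^\bullet}\Sym(W^\vee)) = G^\bullet\times^{B^\bullet_-}\Sym(W^\vee)$, this becomes
\begin{align*}
\chi_{\mu(\bullet)}^{\bi,\ba} = \chi_\cG\bigl(G^\bullet/B^\bullet_-,\, G^\bullet\times^{B^\bullet_-}(\C_{\mu^\bullet}\otimes\Sym(W^\vee))\bigr).
\end{align*}

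The Weyl-Demazure character formula for the full flag variety states
\begin{align*}
\chi_\cG\bigl(G^\bullet/B^\bullet_-,\, G^\bullet\times^{B^\bullet_-}V\bigr) = D_{w_0^\bullet}(\mathrm{ch}_\mathcal{T}(V))
\end{align*}
for any finite-dimensional $B^\bullet_-$-module $V$, and extends termwise (by weight) to the graded module $\C_{\mu^\bullet}\otimes\Sym(W^\vee)$ working in the completion $\hR(\cG)$. Substituting $\mathrm{ch}_\mathcal{T}(\C_{\mu^\bullet}) = x^{\mu(\bullet)}$ (by the definition of $\mu^\bullet$ as the concatenation of the $\mu(\bullet)^{(i)}$ and of $x^{\mu(\bullet)}$ in \eqref{E:mu(bullet)}) and $\mathrm{ch}_\mathcal{T}(\Sym(W^\vee)) = B_{\bi,\ba}$ (from \eqref{E:root weight} and \eqref{E:Bia}) yields \eqref{E:chi-D}.

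The main subtlety lies in the middle step: ensuring $R^{i}r_*\cL_{\mu(\bullet)}=0$ for $i>0$ requires the lower-triangular Borel convention to match the standing hypothesis $\mu(k)\in X_+(GL_{a_k})$. The remaining bookkeeping is routine once one observes that the $\mathcal{T}$-weights of $W$ involve only negative powers of the arrow parameters $t_a$, so that $\Sym(W^\vee)$ is a formal power series in the positive $t_a$-direction and the termwise application of $D_{w_0^\bullet}$ is well-defined in $\hR(\cG)$.
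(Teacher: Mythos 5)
Your proof is correct and follows essentially the same route as the paper's: push down along the vector bundle projection $p$ to replace $\cO_{\cW}$ by $\Sym(W^\vee)$, pull up along $r$ using the vanishing $R^q r_*\cL_{\mu(\bullet)}=0$ for $q>0$ (which the paper likewise derives from the vanishing of higher cohomology of $\cL_{\mu(\bullet)}$ on the fibers $P^\bullet/B^\bullet_-$), and finish with the Euler-characteristic form of Borel--Weil--Bott on $G^\bullet/B^\bullet_-$. The extra remarks on convergence in $\hR(\cG)$ and on the role of the lower-triangular convention are accurate but not points where the paper's argument differs.
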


This is understood as an element of $R(T^\bullet)[[t_{Q_1}]]$, i.e., as the character of a (virtual) $T^{Q_1}$-graded locally finite $G^\bullet$-module.

\begin{proof}
\comment{
For any integral weight $\xi^\bullet\in X(G^\bullet)$, one has the Borel-Weil-Bott theorem:
\begin{align*}
D_{w_0^\bullet}(x^{\xi^\bullet}) = \sum_{p\ge 0}(-1)^p \mathrm{ch}_{\mathcal{G}} H^p(G^\bullet/B^\bullet_-, G^\bullet \times^{B^{\bullet}_-} \C_{\xi^\bullet}).
\end{align*}
When $\xi^\bullet$ is $B^\bullet$-dominant, this is equal to the character of the irreducible $G^\bullet$-representation with highest weight $\xi^\bullet$ with respect to $B^\bullet$.
}

We have a canonical isomorphism
\begin{align*}
H^p(\cW,\cW^{\mu(\bullet)}) &\cong H^p\left(\prod_{i\in Q_0} \Fl_{a^{(i)}}(\bV^{(i)}), r_*\cL_{\mu(\bullet)}\otimes \Sym(\cW^\vee)\right).
\end{align*}
Next we consider the vector bundle $\widetilde{\cW} = G^\bullet\times^{B^\bullet_-} W \cong r^* \cW$ on $\prod_{i\in Q_0}\Fl(\bV^{(i)})$. We have
\begin{align*}
&H^p\left(\prod_{i\in Q_0} \Fl_{a^{(i)}}(\bV^{(i)}), r_*\cL_{\mu(\bullet)}\otimes \Sym(\cW^\vee) \right)\\
&\qquad \cong H^p\left(\prod_{i\in Q_0} \Fl(\bV^{(i)}), \cL_{\mu(\bullet)}\otimes \Sym(\widetilde{\cW}^\vee) \right)
\end{align*}
because $H^q(P^\bullet/B^\bullet_-,\cL_{\mu(\bullet)})=0$ for $q>0$; the latter ensures that $R^q r_*\cL_{\mu(\bullet)}=0$ for $q>0$. Finally, invoking the Borel-Weil-Bott theorem, we have
\begin{align*}
	\chi_{\mu(\bullet)}^{\bi,\ba} &= \mathrm{ch}_{\mathcal{G}} \sum_{p\ge0} (-1)^p H^p\left(\prod_{i\in Q_0}\Fl(\bV^{(i)}), \cL_{\mu(\bullet)} \otimes \Sym(\widetilde{\cW}^\vee)\right) \\
	&= D_{w_0^\bullet} (x^{\mu(\bullet)} B_{\bi,\ba}).\notag\qedhere
\end{align*}
\end{proof}

\begin{ex} Consider the data of Example~\ref{X:ia}. Writing $k,\ell$ to label the groups of factors, we have
\begin{align*}
  B_{\bi,\ba}^{-1} &= 
  \underbrace{\left(1-t_{00} \dfrac{x_1^{(0)}}{x_2^{(0)}} \right)}_{1,2} \underbrace{\left(1-t_{01}\dfrac{x_1^{(0)}}{x_1^{(1)}} \right)}_{1,3}
  \underbrace{\left(1-t_{00} \dfrac{x_1^{(0)}}{x_3^{(0)}} \right)}_{1,4}
  \underbrace{\left(1-t_{01} \dfrac{x_1^{(0)}}{x_2^{(1)}} \right)
  \left(1-t_{01} \dfrac{x_1^{(0)}}{x_3^{(1)}} \right)}_{1,5}
  \\ 
  & \quad\,\underbrace{\left(1-t_{01} \dfrac{x_2^{(0)}}{x_1^{(1)}} \right)}_{2,3}
  \underbrace{\left(1-t_{00} \dfrac{x_2^{(0)}}{x_3^{(0)}} \right)}_{2,4}
 \underbrace{  \left(1-t_{01} \dfrac{x_2^{(0)}}{x_2^{(1)}} \right)
   \left(1-t_{01} \dfrac{x_2^{(0)}}{x_3^{(1)}} \right)}_{2,5} \\
   &\quad\,\underbrace{\left(1-t_{01} \dfrac{x_3^{(0)}}{x_2^{(1)}} \right)\left(1-t_{01} \dfrac{x_3^{(0)}}{x_3^{(1)}} \right)}_{4,5}.
\end{align*}
\end{ex}

\begin{ex} For the data of Example \ref{X:double arrow},
	\begin{align*}
		B_{\bi,\ba}^{-1} = \left(1-t_a \dfrac{x_1^{(0)}}{x_1^{(1)}} \right)
		\left(1-t_a \dfrac{x_2^{(0)}}{x_1^{(1)}} \right)
		\left(1-t_b \dfrac{x_1^{(0)}}{x_1^{(1)}} \right)
		\left(1-t_b \dfrac{x_2^{(0)}}{x_1^{(1)}} \right).
	\end{align*}
\end{ex}

\begin{ex} \label{X:nullcone sl2}
The series $\chi_{\mu(\bullet)}^{\bi,\ba}$ is generally infinite.
For the single loop quiver at $Q_0=\{0\}$ with $\bi=(0,0)$, $\ba=(1,1)$, and $\mu(\bullet)=((0),(0))$ (the graded character of the nullcone in $\mathfrak{gl}_2$) we have
\begin{align*}
	\chi_{\mu(\bullet)}^{\bi,\ba} = \sum_{r\ge0} t_{0,0}^r (x_1^{(0)} x_2^{(0)})^{-r} s_{(2r,0)}(x_1^{(0)},x_2^{(0)}).
\end{align*}
In contrast, the quiver Hall-Littlewood symmetric function --- defined in \eqref{E:quiver HL} below --- is always finite. It is the polynomial truncation of the quiver Hall-Littlewood series (cf. Theorem~\ref{T:quiver HL functions and Kostka}). In this example we have $H^{\bi,\ba}_{\mu(\bullet)} = s_\varnothing[X^{(0)}]$.
\end{ex}

\subsection{Quiver Kostka-Shoji polynomials}\label{SS:Kostka}
Let $\lad\in X_+^\bullet = \prod_{i\in Q_0} X_+(GL(\bV^{(i)}))$. The \textit{quiver Kostka-Shoji polynomials} $\cK_{\lad,\mu(\bullet)}^{\bi,\ba}(t_{Q_1})\in \bZ[t_{Q_1}]$ are defined by the expansion\footnote{The connection to \eqref{E:Kostka Shoji} is given by Theorem~\ref{T:quiver HL functions and Kostka} below.}
\begin{align}\label{E:defquiverKostka}
	\chi_{\mu(\bullet)}^{\bi,\ba} = \sum_{\lad\in X_+^\bullet} \cK^{\bi,\ba}_{\lad,\mu(\bullet)}(t_{Q_1}) s_\lad(x)
\end{align}
of the quiver Hall-Littlewood series into products of Schur polynomials $s_\lad(x)=\prod_{i\in Q_0}s_{\la^{(i)}}(x^{(i)}_1,\dotsc,x^{(i)}_{\nu^{(i)}})$. The fact that $\cK_{\lad,\mu(\bullet)}^{\bi,\ba}(t_{Q_1})\in \bZ[t_{Q_1}]$ is justified by the Kostant partition formula \eqref{E:Kostant} below.

We use the notation $[f] g$ for the coefficient of $f$ in $g$. We have
\begin{align*}
	\cK_{\lad,\mu(\bullet)}^{\bi,\ba}(t_{Q_1}) &= [s_\lad(x)] D_{w_0^\bullet} x^{\mu^\bullet} B_{\bi,\ba} \\
	&= [s_\lad(x)] J^\bullet(x^{\rho^\bullet})^{-1} J^\bullet(x^{\mu(\bullet)+\rhod}) B_{\bi,\ba} \\
	&= [J^\bullet(x^{\lad+\rhod})] J^\bullet(x^{\mu^\bullet+\rhod}) B_{\bi,\ba} \\
	&= [x^{\lad+\rhod}] J^\bullet(x^{\mu^\bullet+\rhod}) B_{\bi,\ba} \\
	&= \sum_{\wbul \in S^\bullet} (-1)^{\wbul} [x^{\lad+\rhod-\wbul(\mu^\bullet+\rhod)}] w (B_{\bi,\ba}) \\
	&= \sum_{\wbul\in S^\bullet} (-1)^{\wbul} [x^{\wbul^{-1}(\lad+\rhod)-(\mu^\bullet+\rhod)}] B_{\bi,\ba}.
\end{align*}
Therefore
\begin{align}\label{E:Kostant}
	\cK_{\lad,\mu(\bullet)}^{\bi,\ba}(t_{Q_1}) &= \sum_{\wbul \in S^\bullet}
	(-1)^{\wbul} \sum_{m:R_{\bi,\ba}\to \bZ_{\ge0}} \prod_{\alpha\in R_{\bi,\ba}} t_\alpha^{m(\alpha)}
\end{align}
where the Kostant partition $m$ satisfies 
\begin{align}\label{E:Kostantweight}
\sum_{\alpha\in R_{\bi,\ba}} m(\alpha) \overline{\alpha} = (\wbul)^{-1}(\lad+\rhod)-(\mu^\bullet+\rhod).
\end{align}
See \textsection \ref{SS:torus weights} for the meaning of $\overline{\alpha}$.

Say that $\mu(\bullet)$ is $(\bi,\ba)$-dominant if, for each $i\in Q_0$, the concatenated weight $\mu^{(i)}$ (see \S \ref{SS:vector bundle}) is dominant (weakly decreasing).

\begin{conj} \label{conj:positivity}
Suppose $\mu(\bullet)$ is $(\bi,\ba)$-dominant. Then for any $p>0$, $$H^p(\cW,\cW^{\mu(\bullet)})=0. $$ Hence for any $\lad\in X_+^\bullet$ the quiver Kostka-Shoji polynomial $\cK_{\lad,\mu(\bullet)}^{\bi,\ba}(t_{Q_1})$ has  nonnegative integer coefficients.
\end{conj}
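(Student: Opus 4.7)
The second assertion follows at once from the first. If $H^p(\cW,\cW^{\mu(\bullet)})=0$ for all $p>0$, then the quiver Hall--Littlewood series $\chi^{\bi,\ba}_{\mu(\bullet)}$ is the $\cG$-character of the honest, finitely generated $\cG$-module $H^0(\cW,\cW^{\mu(\bullet)})$, and by \eqref{E:defquiverKostka} each $\cK^{\bi,\ba}_{\lad,\mu(\bullet)}(t_{Q_1})$ is the $T^{Q_1}$-character of the $\lad$-isotypic component of this module, hence a polynomial with nonnegative integer coefficients. So the substantive task is the vanishing statement.

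For the vanishing I would start from the reduction already carried out in the proof of the preceding proposition. Since each $\mu(k)\in X_+(GL_{a_k})$, Borel--Weil--Bott on the fibers $P^\bullet/B^\bullet_-$ gives $R^q r_*\cL_{\mu(\bullet)}=0$ for $q>0$, and combining the Leray spectral sequence with the projection formula yields
$$
H^p(\cW,\cW^{\mu(\bullet)}) \;\cong\; H^p\bigl(G^\bullet/B^\bullet_-,\; G^\bullet\times^{B^\bullet_-}\bigl(\C_{\mu^\bullet}\otimes\Sym(W^\vee)\bigr)\bigr).
$$
It therefore suffices to prove Kempf-type higher cohomology vanishing for the $G^\bullet$-equivariant bundle on $G^\bullet/B^\bullet_-$ associated to the $B^\bullet_-$-module $\C_{\mu^\bullet}\otimes\Sym(W^\vee)$. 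Under $(\bi,\ba)$-dominance, $\mu^\bullet$ is $G^\bullet$-dominant, so by Kempf vanishing it is enough to produce a good filtration of $\C_{\mu^\bullet}\otimes\Sym(W^\vee)$ whose successive quotients are (restrictions to $B^\bullet_-$ of) dual Weyl modules of $G^\bullet$ with dominant highest weights.

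The main work is the construction of this good filtration. Since $\C_{\mu^\bullet}$ already is the $B^\bullet_-$-restriction of a dual Weyl module and since tensor product with a fixed dual Weyl module preserves good filtrations (by Mathieu's theorem in the theory of good filtrations), the problem reduces to exhibiting a good $G^\bullet$-filtration of the $P^\bullet$-module $\Sym(W^\vee)$. For the single-loop quiver, $W$ is the nilradical of a parabolic subgroup of $G^\bullet$ and the required filtration is supplied by Panyushev's theorem (Remark~\ref{R:vanish})---this is the origin of the known vanishing in that case. For a general quiver I would decompose $W=W_{\text{loop}}\oplus W_{\text{non-loop}}$ according to whether an arrow is a loop at some vertex or goes between distinct vertices, so that $\Sym(W^\vee)\cong\Sym(W_{\text{loop}}^\vee)\otimes\Sym(W_{\text{non-loop}}^\vee)$; the loop factor is handled vertex by vertex by Panyushev. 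The critical obstacle is the non-loop factor, whose $T^\bullet$-weights are not roots of any single $G^{(i)}$. One expects $\Sym(W_{\text{non-loop}}^\vee)$ to admit a good $G^\bullet$-filtration constructed directly from Cauchy-type expansions of symmetric powers of tensor products into products of Schur functors at the different vertices; combining the two pieces via Mathieu's theorem would complete the argument. Making this last ingredient rigorous, especially tracking compatibility between the cross-vertex combinatorics and the dominance hypothesis on $\mu^\bullet$, is the main technical hurdle.
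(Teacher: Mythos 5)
This statement is a \emph{conjecture} in the paper (Conjecture~\ref{conj:positivity}); the authors offer no proof, and the surrounding text records that it is known only in special cases (Panyushev's theorem for sufficiently regular $(\bi,\ba)$-dominant $\mu(\bullet)$, cf.\ Remark~\ref{R:vanish}, and the Finkelberg--Ionov cyclic-quiver setting of Example~\ref{X:cyclic vanishing}). Your proposal does not close it either: you correctly reduce positivity to the vanishing, correctly reduce the vanishing to cohomology on $G^\bullet/B^\bullet_-$ of the bundle attached to $\C_{\mu^\bullet}\otimes\Sym(W^\vee)$ (this matches the computation in the proof of \eqref{E:chi-D}), and then you explicitly leave the decisive step --- an excellent/good filtration of $\Sym(W^\vee)$ as a $B^\bullet_-$-module compatible with the dominance of $\mu^\bullet$ --- as "the main technical hurdle." That hurdle is the entire content of the conjecture, so what you have is a plausible strategy, not a proof.

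Beyond the acknowledged gap, one of your intermediate claims is wrong. You assert that in the single-loop case "the required filtration is supplied by Panyushev's theorem." It is not: Panyushev's result requires $\mu(\bullet)$ to be sufficiently regular, not merely $(\bi,\ba)$-dominant, and Example~\ref{X:loop vanishing} states explicitly that for general $\ba$ the higher vanishing for the single loop quiver "was proposed in [Bro] and is still open." So even the base case of your induction-by-pieces scheme ($W=W_{\text{loop}}\oplus W_{\text{non-loop}}$, handling the loop factor "vertex by vertex by Panyushev") is unavailable. A second caution: $\Sym(W^\vee)$ is only a $P^\bullet$-module, not a $G^\bullet$-module, so "good $G^\bullet$-filtration" must be replaced throughout by the van der Kallen--Polo notion of excellent $B^\bullet_-$-filtration, and the Mathieu-type tensor-product theorem you invoke must be the $B$-module version; the cross-vertex weights $\overline{\alpha}$ for non-loop arrows are not roots of $G^\bullet$, which is precisely why the known arguments (Broer, Panyushev, Frobenius splitting of cotangent bundles) do not transfer. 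If you want an unconditional positive statement, restrict to the regimes where the paper says the conjecture is known.
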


\begin{ex} \label{X:loop vanishing}
For the single loop quiver, the quiver Kostka-Shoji polynomials are Kostka-Foulkes polynomials \cite{Mac} when $\ba=(1,1,\dotsc)$. Higher cohomology vanishing is an instance of \cite[Theorem 2.4]{B}. For general $\ba$, the quiver Kostka-Shoji polynomials are parabolic Kostka polynomials \cite{Bro} \cite{SW}. Higher vanishing was proposed in \cite{Bro} and is still open.
\end{ex}

\begin{ex} \label{X:cyclic vanishing} For cyclic quivers, the quiver Kostka-Shoji polynomials were first introduced and studied by Finkelberg and Ionov \cite{FI} in the setting of Example~\ref{X:cycle}.
It is conjectured in \cite{FI} and proved in \cite{Sh3} that when every arrow parameter is set to a single parameter $t$, these quiver Kostka-Shoji polynomials recover Shoji's polynomials $K^-_{\lad,\mu^\bullet}(t)$ \cite{Sh2}. 
In this setting, Conjecture~\ref{conj:positivity} is an immediate consequence of \cite{P}, as explained in \cite{FI}.
\end{ex}

\begin{rem}\label{R:vanish}
In general, Conjecture~\ref{conj:positivity} is known for sufficiently regular $(\bi,\ba)$-dominant $\mu(\bullet)$, again by a result of Panyushev \cite{P}; see \cite{H}.
\end{rem}

\subsection{Dominance}\label{SS:dom}

For $\lad,\mud \in X^\bullet = X(G^\bullet) = \bigoplus_{i\in Q_0} X(GL(\bV^{(i)}))$, say that $\lad\dom\mud$ if $$\lad-\mud \in \sum_{\alpha\in R_{\bi,\ba}} \Z_{\ge0}\,  \overline{\alpha} + \sum_{\alpha\in R_+(G^\bullet)} \Z_{\ge0}\, \alpha$$ where $\overline{\alpha}$ is defined in \textsection \ref{SS:torus weights}. Any $\dom$-relation is a relation in the usual dominance order on $X(GL(\bV))$; the latter uses all positive roots for $GL(\bV)$ while $\dom$ uses only the positive roots for the Levi subgroup $G^\bullet$ and those in the projection to $X(T^\bullet)$ of the roots in $R_{\bi,\ba}$.

Let $\mu^{\bullet}\in X^\bullet$ be derived from $(\bi,\ba,\mu(\bullet))$ as in \S \ref{SS:vector bundle}. 

\begin{lem} \label{L:dom}
$\cK_{\lad,\mu(\bullet)}^{\bi,\ba}(t_{Q_1})$ is zero unless $\lad\dom\mu^{\bullet}$.
\end{lem}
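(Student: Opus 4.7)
The plan is to apply the Kostant-type formula \eqref{E:Kostant}--\eqref{E:Kostantweight} directly, combined with the standard fact that for a dominant weight of $G^\bullet$, the difference between it and any Weyl group translate is a nonnegative integer combination of positive roots of $G^\bullet$.

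Concretely, suppose $\cK_{\lad,\mu(\bullet)}^{\bi,\ba}(t_{Q_1}) \neq 0$. Then some term in the double sum \eqref{E:Kostant} is nonzero, so there exist $\wbul \in S^\bullet$ and a Kostant partition $m:R_{\bi,\ba}\to\Z_{\ge0}$ such that
\begin{align*}
\sum_{\alpha\in R_{\bi,\ba}} m(\alpha)\,\overline{\alpha} \;=\; \wbul^{-1}(\lad+\rhod) - (\mu^\bullet+\rhod).
\end{align*}
Since $\lad \in X_+^\bullet$, the weight $\lad+\rhod$ is (strictly) dominant for $G^\bullet$. Thus $(\lad+\rhod) - \wbul^{-1}(\lad+\rhod)$ is a nonnegative integer combination of simple roots of $G^\bullet$; call this element $\eta \in \sum_{\alpha\in R_+(G^\bullet)} \Z_{\ge0}\,\alpha$. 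Rearranging,
\begin{align*}
\lad - \mu^\bullet \;=\; \eta + \sum_{\alpha\in R_{\bi,\ba}} m(\alpha)\,\overline{\alpha} \;\in\; \sum_{\alpha\in R_+(G^\bullet)} \Z_{\ge0}\,\alpha \;+\; \sum_{\alpha\in R_{\bi,\ba}} \Z_{\ge0}\,\overline{\alpha},
\end{align*}
which is precisely the condition $\lad\dom\mu^\bullet$.

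There is really no obstacle here; the only thing to double-check is the sign convention and that $\rhod$ cancels correctly (it does, as both sides of \eqref{E:Kostantweight} are shifted by $\rhod$). Note also that the argument does not require $\mu(\bullet)$ to be $(\bi,\ba)$-dominant; the lemma holds for arbitrary weight sequences, since only the dominance of $\lad$ is used, and the monomials $t_\alpha^{m(\alpha)}$ are manifestly supported on $\overline{\alpha}\in \sum_{\alpha\in R_{\bi,\ba}} \Z_{\ge0}\,\overline{\alpha}$.
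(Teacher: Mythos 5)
Your proof is correct and follows exactly the route the paper intends: the paper's own proof is the one-line remark that the claim is ``immediate from \eqref{E:Kostant},'' and your argument is precisely the spelled-out version, using that $\lad+\rhod$ is dominant so that $(\lad+\rhod)-\wbul^{-1}(\lad+\rhod)$ lies in the nonnegative span of $R_+(G^\bullet)$. Your closing observation that no dominance hypothesis on $\mu(\bullet)$ is needed is also consistent with the paper, which states the lemma for arbitrary $\mu^\bullet$.
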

\begin{proof}
This is immediate from \eqref{E:Kostant}.
\end{proof}

\subsection{Cycles}
The nontrivial portion of the grading is encoded by directed cycles. 

\begin{lem} \label{L:cycle grading}
Every polynomial $\cK^{\bi,\ba}_{\lad,\mu(\bullet)}(t_{Q_1})$ is a single monomial times a Laurent polynomial with integer coefficients, in products of arrow variables coming from directed cycles.
\end{lem}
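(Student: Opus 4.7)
The plan is to extract the constraints on arrow-variable exponents directly from the Kostant partition formula \eqref{E:Kostant}. Consider the $\mathbb{Z}$-linear map $\pi\colon X(T^\bullet) \to \mathbb{Z}^{Q_0}$ sending each basis weight $e^{(i)}_p$ at vertex $i$ to the basis vector $e_i$. Since $S^\bullet = \prod_i S^{(i)}$ only permutes the $e^{(i)}_p$ within each vertex, the map $\pi$ is $S^\bullet$-invariant; and from \eqref{E:root weight} together with $ha = i_k$ and $ta = i_\ell$, one computes $\pi(\overline{\alpha}) = e_{ta} - e_{ha}$ for any $\alpha = \alpha^a_{p,q}(k,\ell) \in R_{\bi,\ba}$.

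Now fix a term of \eqref{E:Kostant} with monomial $\prod_\alpha t_\alpha^{m(\alpha)}$, and let $n_a := \sum_{\alpha\,:\,t_\alpha = t_a} m(\alpha)$ be its exponent on $t_a$. Applying $\pi$ to the weight constraint \eqref{E:Kostantweight}, the $S^\bullet$-invariance of $\pi$ and the cancellation of $\pi(\rhod)$ collapse the right-hand side to $\pi(\lad) - \pi(\mu^\bullet)$, yielding
\[
\sum_{a \in Q_1} n_a\,(e_{ta} - e_{ha}) \;=\; \pi(\lad) - \pi(\mu^\bullet).
\]
The right-hand side depends only on $\lad$ and $\mu^\bullet$, so every exponent vector $(n_a)_{a \in Q_1}$ appearing in \eqref{E:Kostant} lies in a fixed coset of the $1$-cycle lattice $Z_1(Q) := \ker\bigl(\partial\colon \mathbb{Z}^{Q_1} \to \mathbb{Z}^{Q_0},\ a \mapsto e_{ta} - e_{ha}\bigr)$. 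Fixing any representative $(n^0_a)$ of this coset and setting $M := \prod_a t_a^{n^0_a}$, we obtain the factorization $\cK^{\bi,\ba}_{\lad,\mu(\bullet)}(t_{Q_1}) = M \cdot P(t_{Q_1})$, where $P$ is a Laurent polynomial with integer coefficients in monomials $\prod_a t_a^{z_a}$ with $(z_a) \in Z_1(Q)$ --- that is, in products of arrow variables coming from directed cycles of $Q$.

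The only mild subtlety is the identification of $Z_1(Q)$ with the lattice spanned by directed cycles, which is to be understood in the standard chain-complex sense of $1$-cycles in the CW complex associated to $Q$; this is routine once the coset statement above is in hand, and is the main (minor) point where terminological care is needed.
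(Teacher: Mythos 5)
Your argument is essentially the paper's own proof: the authors define the same $S^\bullet$-equivariant projection $\pr\colon X^\bullet\to\Z^{Q_0}$ (collapsing each $X(GL(\bV^{(i)}))$ to its coordinate sum times $f^{(i)}$), apply it to the Kostant weight constraint \eqref{E:Kostantweight} so that the $\rhod$ and $\wbul$ dependence disappears, and conclude that all contributing exponent vectors lie in one coset of the kernel, which they identify with the lattice generated by (edges of) directed cycles via the remark that circulations are generated by directed cycles. Your explicit identification of that kernel with $Z_1(Q)$ in the chain-complex sense, and your flagging of the terminological gap between topological $1$-cycles and literal directed cycles (relevant e.g.\ for multiple parallel arrows), is the same caveat the paper handles in its follow-up remarks, so the proposal is correct and matches the paper's route.
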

\begin{proof}
We have $X(GL(V^{(i)}))\cong \Z^{\nu^{(i)}}$ for $i\in Q_0$.
Let $\pr:X^\bullet \to \Z^{Q_0}$ be the linear map sending 
$(a_1,\dotsc,a_{\nu^{(i)}})\in X(GL(V^{(i)}))$ to
$(a_1+a_2+\dotsm+a_{\nu^{(i)}}) f^{(i)}$ with $f^{(i)}\in \Z^{Q_0}$ as defined in \textsection \ref{SS:data}. Letting $S^\bullet$ act on $\Z^{Q_0}$ by the identity, $\pr$ is $S^\bullet$-equivariant. By \eqref{E:Kostant} and \eqref{E:Kostantweight} the weight of every Kostant partition in \eqref{E:Kostant} is sent by $\pr$ to the same vector, namely, $\pr(\lad-\mu^\bullet)$. Recalling $\overline{\alpha}$ from \textsection \ref{SS:torus weights}, the kernel of the restriction of $\pr$ to $\sum_{\alpha\in R_{\bi,\ba}} \Z \overline{\alpha}$, is generated by sums of collections of vectors $\alpha^b_{p,q}(k,\ell)\in R_{\bi,\ba}$ whose edges $b$ form directed cycles in $Q$.
\end{proof}

\begin{rem} This uses the general fact that circulations in directed graphs are generated by directed cycles.
\end{rem}

\begin{rem}
Lemma~\ref{L:cycle grading} is consistent with the number of dilation symmetries acting on the equivariant $K$-groups of Nakajima varieties, namely the rank of $H_*(Q)$ where the quiver $Q$ is regarded as a topological space. (We thank Michael Finkelberg for this clarifying remark.)
\end{rem}

\section{Quiver Hall-Littlewood symmetric functions via creation operators}

In this section we define the quiver Hall-Littlewood symmetric functions and show that they are lifts of the quiver Hall-Littlewood series.

\subsection{Symmetric functions}

Let $\La$ denote the algebra of symmetric functions with coefficients in the ring $R(T^{Q_1})=\Z[t_a^{\pm1}\mid a\in Q_1]$ of Laurent polynomials in the arrow variables. We freely use standard plethystic notation from the theory of symmetric functions, mostly following the notation of \cite{LR}. For instance, the projection from $\Lambda$ to symmetric polynomials in finitely many variables $x_1,x_2,\dotsc,x_n$ is denoted $f\mapsto f[x_1+x_2+\dotsm+x_n]$.

The element $\Omega=\sum_{k\ge 0} h_k=\exp(\sum_{r>0} p_r/r)$, which belongs to a formal completion of $\Lambda$, will play an important role. Here $h_k$ is the $k^{\mathrm{th}}$ complete homogeneous symmetric function and $p_r$ is the $r^{\mathrm{th}}$ power sum. Many of our formulas can be understood using formal properties of $\Omega$, such as:
\begin{align}
\Omega[X+Y]&=\Omega[X]\Omega[Y]\\
\Omega[-X] &=1/\Omega[X]\\
\Omega[u] &= 1/(1-u)
\end{align}
where $X,Y$ are alphabets and $u$ is a single variable.

We denote by $\bY$ the set of integer partitions.

\subsection{Quiver symmetric functions}\label{SS:quiver-sym-fun}
Let $\La^Q = \bigotimes_{i\in Q_0} \La^{(i)}$ be the space of quiver symmetric functions, the tensor product of copies of the symmetric function algebra $\La$, one copy per $i\in Q_0$. We use $X^{(i)}$ for the variables at vertex $i$.

Let $i\in Q_0$, $a\in\bZ_{>0}$, and $\mu\in\bY$ a partition with $a$ parts, some of which may be zero. Define the current $H^{(i,a)}_\mu \in \End(\La^Q)$ via the following generating function. Let $U = (u_1,\dotsc,u_a)$ be a set of auxiliary variables. Let $R(U) = u^{-\rho_a} J(u^{\rho_a})= \prod_{1\le i<j\le a} (1-u_j/u_i)$ and $U^* = \sum_{j=1}^a u_j^{-1}$. Let
\begin{align}\label{E:Out}
\Out(i) = \{b\in Q_1 \mid tb = i\}
\end{align}
be the set of arrows coming out of $i$. We define
\begin{align}\label{E:H-current}
	H^{(i,a)}(U)&=\sum_{\beta\in\bZ^a} u^\beta H^{(i,a)}_{\beta} \\
	&= R(U) \Omega[UX^{(i)}] \Omega[-U^*X^{(i)}]^\perp 
    \prod_{b\in \Out(i)}
    \Omega[t_b U^*X^{(hb)}]^\perp\notag
\end{align}
where $f^\perp$ denotes the adjoint to multiplication by $f\in \La^Q$ with respect to the Hall scalar product $\pair{\cdot}{\cdot}$.

Alternatively, consider the generating function of Bernstein operators
\begin{align}\label{E:Bernstein}
	\sum_{m\in\Z} S_m u^m = S(u) = \Omega[uX] \Omega[-u^{-1}X]^\perp.
\end{align}
These operators create Schur functions: $S_{\lambda_1} S_{\lambda_2}\dotsm S_{\lambda_a}\cdot 1 = s_\lambda$ for any partition $\lambda$ with at most $a$ rows; when $\lambda$ is an arbitrary finite sequence of integers we take the left-hand side as the definition of $s_\lambda$. We note that $S_{p-1}S_{q+1}=-S_qS_p$ for all $p,q\in\Z$. We write $S^{(i)}(u)=\sum_m S^{(i)}_m u^m$ for the Bernstein operators with $X$ replaced by $X^{(i)}$. Then
\begin{align}\label{E:H-alternate}
	H^{(i,a)}(U)&=S^{(i)}(u_1)S^{(i)}(u_2)\dotsm S^{(i)}(u_a) 
	\prod_{b\in\Out(i)}
	\Omega[t_b U^*X^{(hb)}]^\perp.
\end{align}

Let $(\bi,\ba,\mu(\bullet))$ index a sequence of currents. 
For $1\le k\le m$ let $u(k)$ be an $a_k$-tuple of auxiliary variables. For $i\in Q_0$ let $u^{(i)}$ be the
ordered union of the $u(k)$ such that $i_k=i$, analogously to the definition of $\mu^{(i)}$ based on $\mu(\bullet)$ as in \textsection \ref{SS:vector bundle}.

The \textit{quiver Hall-Littlewood symmetric function} $H^{\bi,\ba}_{\mu(\bullet)}\in \La^Q$ is defined by
\begin{align}\label{E:quiver HL}
H^{\bi,\ba}_{\mu(\bullet)} = H^{(i_1,a_1)}_{\mu(1)} H^{(i_2,a_2)}_{\mu(2)}\dotsm H^{(i_m,a_m)}_{\mu(m)} \cdot 1.
\end{align}

\begin{thm} \label{T:quiver HL functions and Kostka} 
We have
\begin{align}
  H^{\bi,\ba}_{\mu(\bullet)} = \sum_{\lad \in \bY^\bullet_{\nu^\bullet}} \cK^{\bi,\ba}_{\lad,\mu(\bullet)}(t_{Q_1}) s_{\lad}[\Xd]
\end{align}
where $\bY^{Q_0}_{\nu^\bullet}$ is the set of $Q_0$-tuples of partitions $\lad$ such that $\la^{(i)}$ has at most $\nu^{(i)}$ rows for $i\in Q_0$.
\end{thm}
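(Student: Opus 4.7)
The plan is to compute the Schur expansion of $H^{\bi,\ba}_{\mu(\bullet)}$ directly from the creation-operator definition \eqref{E:quiver HL} and match it term-by-term to the Kostant formula \eqref{E:Kostant}--\eqref{E:Kostantweight}. Introducing auxiliary alphabets $u(k)=(u(k)_1,\ldots,u(k)_{a_k})$ for $1\le k\le m$, assemble the generating function
\[
\mathcal{H}(u(\bullet)) = H^{(i_1,a_1)}(u(1))\,H^{(i_2,a_2)}(u(2))\,\cdots\,H^{(i_m,a_m)}(u(m))\cdot 1,
\]
so that $H^{\bi,\ba}_{\mu(\bullet)}$ is the coefficient of $\prod_k u(k)^{\mu(k)}$ in $\mathcal{H}(u(\bullet))$. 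By the Bernstein form \eqref{E:H-alternate}, each factor is a product of creation operators $S^{(i_k)}(u(k)_j)$ at vertex $i_k$ followed by adjoint (plethystic substitution) operators $\Omega[t_bu(k)^{*}X^{(hb)}]^\perp$ for $b\in\Out(i_k)$.

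The central step is to push every adjoint past every subsequent operator until it hits the vacuum and acts trivially. All adjoints mutually commute, and an adjoint at vertex $hb$ commutes with any creation at a different vertex. The only nontrivial interaction, obtained from the rule $\Omega[YX]^\perp\cdot f(X)=f[X+Y]$, is
\[
\Omega[YX]^\perp\,\Omega[ZX] = \Omega[YZ]\,\Omega[ZX]\,\Omega[YX]^\perp.
\]
Thus $\Omega[t_bu(k)^{*}X^{(hb)}]^\perp$ from current $k$ picks up the scalar $\Omega[t_bu(k)^{*}u(\ell)_q]=\prod_p(1-t_bu(\ell)_q/u(k)_p)^{-1}$ when it passes a Bernstein creation $S^{(i_\ell)}(u(\ell)_q)$ at $i_\ell=hb$, and passes everything else freely. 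After all adjoints are moved to the right, we obtain
\[
\mathcal{H}(u(\bullet)) = \mathcal{C}(u(\bullet),t_{Q_1})\cdot\prod_{k=1}^m\prod_{j=1}^{a_k}S^{(i_k)}(u(k)_j)\cdot 1,
\]
with
\[
\mathcal{C}(u(\bullet),t_{Q_1}) = \prod_{\substack{1\le k<\ell\le m\\ b\in Q_1,\,tb=i_k,\,hb=i_\ell}}\prod_{p=1}^{a_k}\prod_{q=1}^{a_\ell}(1-t_bu(\ell)_q/u(k)_p)^{-1}.
\]
The remaining Bernstein product factors over vertices (alphabets at distinct vertices commute), and at each vertex $i$ produces the generating series $\sum_{\alpha^{(i)}\in\bZ^{\nu^{(i)}}}(u^{(i)})^{\alpha^{(i)}}\,s_{\alpha^{(i)}}[X^{(i)}]$, where $u^{(i)}$ is the ordered concatenation of the $u(k)$ with $i_k=i$ and $s_{\alpha^{(i)}}[X^{(i)}]$ is the Bernstein Schur function---equal to $(-1)^{w^{(i)}}s_{\la^{(i)}}[X^{(i)}]$ precisely when $\alpha^{(i)}=(w^{(i)})^{-1}(\la^{(i)}+\rho^{(i)})-\rho^{(i)}$ for some (unique) $w^{(i)}\in S^{(i)}$, and vanishing otherwise.

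Taking the Hall pairing with $s_{\lad}[X^\bullet]$, for each $\wbul\in S^\bullet$ a unique integer sequence $\alpha=\alpha_w$ contributes, determined by $\alpha_w^\bullet=(\wbul)^{-1}(\lad+\rhod)-\rhod$, giving
\[
[s_{\lad}[X^\bullet]]\,H^{\bi,\ba}_{\mu(\bullet)} = \sum_{\wbul\in S^\bullet}(-1)^{\wbul}\,\bigl[\textstyle\prod_k u(k)^{\mu(k)-\alpha_w(k)}\bigr]\,\mathcal{C}(u(\bullet),t_{Q_1}).
\]
Expanding $\mathcal{C}$ as a sum over Kostant partitions $m\colon R_{\bi,\ba}\to\bZ_{\ge 0}$ identifies the $u$-exponent of each $m$ with $\sum_\alpha m(\alpha)\overline{\alpha}$ under $u(k)_p\leftrightarrow x(k)_p$, and the resulting double sum reproduces \eqref{E:Kostant}--\eqref{E:Kostantweight}, identifying $[s_{\lad}]H^{\bi,\ba}_{\mu(\bullet)}$ with $\cK^{\bi,\ba}_{\lad,\mu(\bullet)}(t_{Q_1})$. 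The row bound $\ell(\la^{(i)})\le\nu^{(i)}$ defining $\bY^{Q_0}_{\nu^\bullet}$ is automatic since exactly $\nu^{(i)}$ Bernstein operators act on $\La^{(i)}$. The principal technical obstacle is the careful bookkeeping of indices and signs in this last identification, in particular reconciling the Bernstein straightening sign $(-1)^{\wbul}$ with the Weyl-group antisymmetrization $\sum_{\wbul}(-1)^{\wbul}$ in the Kostant sum.
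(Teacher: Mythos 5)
Your proposal is correct and follows essentially the same route as the paper's proof: commute the skewing operators rightward until they annihilate the vacuum, thereby producing the factor $B_{\bi,\ba}(u^*)$ (your $\mathcal{C}$), evaluate the remaining creation operators on $1$ via Bernstein straightening (the paper does this equivalently through the Cauchy identity and the alternant $J^\bullet$), and match the resulting coefficient extraction with the Kostant formula \eqref{E:Kostant}. The only cosmetic difference is that you start from the Bernstein form \eqref{E:H-alternate} rather than the $\Omega$-form \eqref{E:H-current}, which packages the Vandermonde factors $R(u^{(i)})$ automatically.
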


This sum is finite since the coefficient is zero unless
$\sum_{k=1}^m |\mu(k)| = \sum_{i\in Q_0} |\la^{(i)}|$.

\begin{rem} Theorem \ref{T:quiver HL functions and Kostka} says that the tensor Schur coefficients of $H^{\bi,\ba}_{\mu(\bullet)}$ are special cases of the coefficients of $\chi^{\bi,\ba}_{\mu(\bullet)}$ at irreducible characters, namely, when each $\la^{(i)}$ is a polynomial dominant weight. Conversely, each coefficient of the generally infinite series $\chi^{\bi,\ba}_{\mu(\bullet)}$ occurs as a coefficient of some quiver HL symmetric function, but the sequence of weights must be shifted. Consider Example \ref{X:nullcone sl2}. The general coefficient of $\chi^{\bi,\ba}_{(0),(0)}$ is $\cK^{\bi,\ba}_{(p,-p),((0),(0))} = t_{0,0}^p$ for $p \ge 0$. The quiver Hall-Littlewood symmetric function $H^{\bi,\ba}_{((0),(0))}$ only sees the coefficient for $p=0$. However if we shift the weights in $\mu(\bullet)$ by adding $p$ to every part, by Theorem \ref{T:quiver HL functions and Kostka} we have
\begin{align*}
	\chi^{\bi,\ba}_{((p),(p))} &= (x_1^{(0)}x_2^{(0)})^p \chi^{\bi,\ba}_{((0),(0))} \\
	\cK^{\bi,\ba}_{(p,-p),((0),(0))} &= \cK^{\bi,\ba}_{(2p,0),((p),(p))} \\
	&= \pair{H^{\bi,\ba}_{((p),(p))}}{s_{(2p,0)}},
\end{align*}
which realizes a general coefficient of the quiver Hall-Littlewood series, as a coefficient of some quiver Hall-Littlewood symmetric function.
\end{rem}

\begin{proof}[Proof of Theorem \ref{T:quiver HL functions and Kostka}]
We must commute all skewing operators to the right past all multiplication operators using
\begin{align*}
	\Omega[ZX^{(i)}]^\perp \Omega[X^{(j)}Y] = (\Omega[ZX^{(i)}])^{\delta_{ij}} \Omega[X^{(j)}Y] \Omega[ZX^{(i)}]^\perp
\end{align*}
where $\perp$ is taken with respect to $X^{(i)}$ and $Y$ and $Z$ are auxiliary variables.

The pairs of skewing and multiplication operators which contribute factors are (for $1\le k < \ell \le m$):
\begin{itemize}
\item $\Omega[-u(k)^*X^{(i_k)}]^\perp$ with $\Omega[u(\ell)X^{(i_\ell)}]$ where $i_k=i_\ell$, giving $\Omega[-u(k)^*u(\ell)]$.
\item $\Omega[t_b u(k)^* X^{(i_\ell)}]^\perp$ with $\Omega[u(\ell)X^{(i_\ell)}]$ where $i_k \overset{b}\longrightarrow i_\ell$ (that is, $tb=i_k$ and $hb=i_\ell$), yielding the factor $\Omega[t_b u(k)^* u(\ell)]$.
\end{itemize}
The skewing operators $\Omega[ZX^{(i)}]^\perp$ send $1$ to $1$, so when they reach the right side and act on $1$, they disappear. Therefore we have
\begin{align*}
&\quad\,\,\,H^{(i_1,a_1)}(u(1)) \dotsm H^{(i_m,a_m)}(u(m)) \cdot 1 \\
&= \prod_{1\le k\le m} R(u(k)) \,\Omega[u(k) X^{(i_k)}] 
\prod_{\substack{1\le k<\ell\le m \\ i_k=i_\ell}} \Omega[-u(k)^* u(\ell)] \prod_{\substack{1\le k<\ell\le m \\ b\in Q_1 \\ i_k \overset{b}\rightarrow i_\ell  }} \Omega[t_b u(k)^* u(\ell)] \\
&= \prod_{i\in Q_0} R(u^{(i)}) \Omega[u^{(i)}X^{(i)}]\, B_{\bi,\ba}(u^*)
\end{align*}
where $B_{\bi,\ba}(u^*)$ is the expression \eqref{E:Bia} except the $x$ variables are replaced by the inverses of the auxiliary $u$ variables.

The above expression is a series in the auxiliary variables with coefficients in $\La^Q$, which has the tensor Schur symmetric function basis $s_\lad[X^\bullet] = \prod_{i\in Q_0} s_{\la^{(i)}}[X^{(i)}]$ where $\lad\in \bY^{Q_0}$ is a $Q_0$-tuple of partitions. By the Cauchy formula
\begin{align*}
	\Omega[u^{(i)}X^{(i)}] = \sum_{\substack{\la\in\bY \\ \ell(\la) \le \nu^{(i)}}} s_\la[u^{(i)}] s_\la[X^{(i)}].
\end{align*}
Using \eqref{E:Kostant} we have
\begin{align*}
&\pair{H^{(i_1)}_{\mu(1)} \dotsm H^{(i_m)}_{\mu(m)} \cdot 1}{s_\lad[X^\bullet]}  \\
	&= [u^{\mu^\bullet}] u^{-\rho^\bullet} J^\bullet(u^{\rho^\bullet})
	s_\lad[u^\bullet] \prod_{\substack{1\le k <\ell \le m \\  i_k \overset{b}\rightarrow i_\ell}} \Omega[t_b u(k)^* u(\ell)] \\
	&= [u^{\mud+\rhod}]  J^\bullet(u^{\lad+\rhod})
	 B_{\bi,\ba}(u^*) \\
	 &= [u^{\mud+\rhod}]\sum_{\wbul\in S^\bullet} (-1)^{\wbul}  u^{\wbul(\lad+\rhod)} B_{\bi,\ba}(u^*) \\
	 &= \sum_{\wbul\in S^\bullet} (-1)^{\wbul} [ u^{(\mud+\rhod)-\wbul(\lad+\rhod)}] B_{\bi,\ba}(u^*) \\
	 &= \sum_{\wbul\in S^\bullet} (-1)^{\wbul} [ u^{\wbul(\lad+\rhod)-(\mud+\rhod)}] B_{\bi,\ba}(u) \\
	 &= \cK_{\lad,\mu(\bullet)}(t_{Q_1}).\qedhere
\end{align*}
\end{proof}
\section{Combinatorics}
\subsection{Recurrence}
\label{SS:Morris}
Let $(\bi,\ba,\mu(\bullet))$ be a sequence of $m$ currents,
$(\hbi,\hba,\hmu(\bullet))$ the above data with the first current removed, $U$ an auxiliary alphabet of size $a=a_1$ and $\rho=\rho_a$. For $\sigma\in\Z^a$ define the multi-Bernstein operator $B_\sigma$ by
\begin{align*}
	S(u_1) S(u_2)\dotsm S(u_a) &= 
	\sum_{\sigma\in\Z^a} u^\sigma B_\sigma = u^{-\rho} J(u^\rho) \Omega[U X] \Omega[-U^* X]^\perp.
\end{align*}
Since the following expression is alternating in $U$ we have
\begin{align*}
	J(u^\rho) \Omega[U X] \Omega[-U^* X]^\perp &= 
	\sum_{\sigma\in\Z^a} u^{\sigma+\rho} B_\sigma \\
	&= \sum_{\tau\in X_+(GL_a)} J(u^{\tau+\rho}) B_\tau \\
	&=\sum_{\tau\in X_+(GL_a)} J(u^\rho) s_\tau[U] B_\tau.
\end{align*}

Write $B^{(i)}(U)$ and $B^{(i)}_\tau$ for $B(U)$ and $B_\tau$ with $X^{(i)}$ replacing $X$.

Consider the tuples of partitions  (see \eqref{E:Out}) $\beta^\bullet\in\bY^{\Out(i)}=(\beta^{(b)}\mid b\in \Out(i))$. 
Let $t^\beta = \prod_{b\in\Out(i)} t_b^{|\beta^{(b)}|}$. We have
\begin{align*}
	H^{(i,a)}(U) &= B^{(i)}(U) \prod_{b\in \Out(i)} \Omega[t_b U^* X^{(hb)}]^\perp \\
	&= u^{-\rho} J(u^\rho) \sum_{\tau\in X_+(GL_a)} s_\tau[U] B_\tau^{(i)} 
	\sum_{\beta^\bullet\in\bY^{\Out(i)}} t^\beta \prod_{b\in\Out(i)} s_{\beta^{(b)}}[U^*]  s_{\beta^{(b)}}[X^{(hb)}]^\perp  \\
	&= u^{-\rho} J(u^\rho) \sum_{\substack{\tau,\eta\in X_+(GL_a)}} s_\eta[U] \sum_{\beta^\bullet\in\bY^{\Out(i)}} t^\beta 
	\pair{\eta}{\tau \bigotimes_{b\in\Out(i)} \beta^{(b)*}}_{GL_a} \\ &\qquad B_\tau^{(i)} \prod_{b\in\Out(i)}
	s_{\beta^{(b)}}[X^{(hb)}]^\perp \\
	&= u^{-\rho}  \sum_{\substack{\tau,\eta\in X_+(GL_a)}} J(u^{\eta+\rho}) \sum_{\beta^\bullet\in\bY^{\Out(i)}} t^\beta 
	\pair{\eta\otimes \bigotimes_{b\in\Out(i)} \beta^{(b)}}{\tau}_{GL_a}\\ &\qquad B_\tau^{(i)}\prod_{b\in\Out(i)}
	s_{\beta^{(b)}}[X^{(hb)}]^\perp 
\end{align*}
and
\begin{align*}
	H^{(i,a)}_{\mu(1)} &= [u^{\mu(1)}] u^{-\rho}  \sum_{\substack{\tau,\eta\in X_+(GL_a) \\ \beta^\bullet\in\bY^{\Out(i)}}} t^\beta  J(u^{\eta+\rho}) 
	\pair{\eta\otimes\bigotimes_{b\in\Out(i)} \beta^{(b)}}{\tau }_{GL_a} \\ &\qquad B^{(i)}_\tau 
	\prod_{b\in\Out(i)} s_{\beta^{(b)}}[X^{(hb)}]^\perp \\
	&= \sum_{\substack{\tau\in X_+(GL_a) \\ \beta^\bullet\in\bY^{\Out(i)}}} t^\beta 
	\pair{\mu(1)\otimes \bigotimes_{b\in\Out(i)} \beta^{(b)}}{\tau }_{GL_a} B^{(i)}_\tau 
	\prod_{b\in \Out(i)} s_{\beta^{(b)}}[X^{(hb)}]^\perp.
\end{align*}
Since $\mu(1)$ and the $\beta^{(b)}$ are partitions, the tensor product is a polynomial character and we may assume that $\tau$ is a partition with at most $a$ rows. We have
\begin{align*}
	H^{(i,a)}_{\mu(1)} &= \sum_{\substack{\tau\in \bY_a \\ \beta^\bullet\in \bY^{\Out(i)}}} t^\beta 
	\pair{\mu(1)\otimes \bigotimes_{b\in\Out(i)} \beta^{(b)}}{\tau}_{GL_a} B^{(i)}_\tau \prod_{b\in\Out(i)} s_{\beta^{(b)}}[X^{(hb)}]^\perp.
\end{align*}
Therefore
\begin{align*}
	H^{\bi,\ba}_{\mu(\bullet)} &= H^{(i,a)}_{\mu(1)}(H^{\hbi,\hba}_{\hmu(\bullet)}) \\
	&= \sum_{\gd} \cK^{\hbi,\hba}_{\gd,\hmu(\bullet)}(t_{Q_1}) \sum_{\tau , \beta^\bullet}   t^\beta 
	\pair{\mu(1)\otimes \bigotimes_{b\in\Out(i)} \beta^{(b)}}{\tau }_{GL_a}  \\ &\qquad
	B^{(i)}_\tau\prod_{b\in\Out(i)} s_{\beta^{(b)}}[X^{(hb)}]^\perp(s_{\gd}) \\
	\cK^{\bi,\ba}_{\lad,\mu(\bullet)}(t_{Q_1}) &=
	\sum_{\gd} \cK^{\hbi,\hba}_{\gd,\hmu(\bullet)}(t_{Q_1}) \sum_{\tau, \beta^\bullet}   t^\beta   \pair{\mu(1)\otimes  \bigotimes_{b\in\Out(i)} \beta^{(b)}}{\tau}_{GL_a}\\ &\qquad \pair{B^{(i)}_\tau 
		\prod_{b\in \Out(i)} s_{\beta^{(b)}}[X^{(hb)}]^\perp(s_{\gd})}{s_\lad} \\
	&=	\sum_{\gd} \cK^{\hbi,\hba}_{\gd,\hmu(\bullet)}(t_{Q_1}) \sum_{\tau, \beta^\bullet}   t^\beta   \pair{\mu(1)\otimes  \bigotimes_{b\in\Out(i)} \beta^{(b)}}{\tau}_{GL_a}\\ &\qquad \prod_{j\in Q_0} \pair{B_\tau^{\delta_{ij}} \prod_{i \overset{b}{\rightarrow}j} 
		s_{\beta^{(b)}}^\perp(s_{\gamma^{(j)}})}{s_{\la^{(j)}}}.
\end{align*}

For the rest of this computation, for simplicity let us assume that $Q$ has no loop at $i$. Then the $i$-th factor in the product over $Q_0$ is $\pair{B_\tau(s_{\gamma^{(i)}})}{s_{\la^{(i)}}}$. Let us assume this is nonzero.
By the definition of $\tau$, the pair $(\tau, \gamma^{(i)})$ must be of the form $(\alpha(w),\zeta(w))$ where $\alpha(w)$ and $\zeta(w)$ are the first $a$ and last $n-a$ parts of the weight $w^{-1}(\la^{(i)}+\rho_n)-\rho_n$ where $n=\ell(\la^{(i)})$ and $w$ is in the set $S_n^a$ of minimum length coset representatives for $S_n/(S_a\times S_{n-a})$. Moreover the $GL_a$-pairing vanishes unless $\mu(1)\subset \tau =\alpha(w)$. We have
\begin{align*}
	\cK^{\bi,\ba}_{\lad,\mu(\bullet)}(t_{Q_1}) &=
	\sum_{\substack{w\in S_n^a \\ \alpha(w) \supset \mu(1)}}
	(-1)^w \sum_{\gd} \cK^{\bi,\ba}_{\gd,\hmu(\bullet)}(t_{Q_1}) \sum_{\beta^\bullet\in \bY^{\Out(i)}} t^\beta \\
	&\qquad \pair{\mu(1)\otimes  \bigotimes_{b\in\Out(i)} \beta^{(b)}}{\alpha(w)}_{GL_a}
	\prod_{j\in Q_0\setminus\{i\}} \pair{s_{\gamma^{(j)}}}{s_{\la^{(j)}} \prod_{i \overset{b}{\rightarrow}j} s_{\beta^{(b)}}}.
\end{align*}

\begin{ex} \label{X:the example} Let $Q=\bZ/2\bZ$ be the cyclic quiver on two vertices and let:
\begin{align*}
\bi &=(0,1,0,1,0)\\
\ba &=(2,2,2,2,3)\\
\mu(\bullet) &=((4,2),(0,0),(2,2),(0,0),(2,1,1))\\
\lad &= ((6,3,3,1,1),\emptyset).
\end{align*}
The nonzero terms correspond to the $w\in \{ \id , s_2 \}$. For $w=\id$ we have
	$(\alpha,\gamma)=((6,3),(3,1,1))$. Letting $U=(u_1,u_2)$ we compute $s_{\alpha/\mu(1)}=s_{(6,3)/(4,2)}[U] = s_{3}[U]+s_{21}[U]$. We sum over $\gamma^{(1)}$ containing $\la^{(1)}=\emptyset$ with at most $a_1=2$ rows, of the same size as $\alpha/\mu(1)$ (which has size $3$).
	So $\gamma^{(1)} \in \{(3,0),(2,1)\}$.
	
	For $w=s_2$ we have $(\alpha,\gamma)=((6,2),(4,1,1))$,
	$s_{\alpha/\mu(1)}[U]=s_2[U]$, so $\gamma^{(1)}\in \{(2,0)\}$. Computing recursively, we have
	\begin{align*}
		\cK^{\bi,\ba}_{(6,3,3,1,1),\emptyset)}(t_{01},t_{10}) &=
		t_{01}^3 \cK^{\hbi,\hba}_{((3,1,1),(3,0)),\hmu(\bullet)}(t_{01},t_{10}) \\
		&+ t_{01}^3 \cK^{\hbi,\hba}_{((3,1,1),(2,1)),\hmu(\bullet)}(t_{01},t_{10}) \\
		&- t_{01}^2 \cK^{\hbi,\hba}_{((4,1,1),(2,0)),\hmu(\bullet)}(t_{01},t_{10}) \\
		&= t_{01}^3 t_{10}^3 (t^3+2t^2) + t_{01}^3 t_{10}^3 (t^3+4t^2+2t) - t_{01}^2 t_{10}^2 (t^3+t^2) \\
		&= (t^6+2t^5) + (t^6+4t^5+2t^4) - (t^5+t^4) \\
		&= 2t^6 + 5t^5 + t^4.
	\end{align*}
\end{ex}

\subsection{Nonbranching quivers}

Say that a quiver is nonbranching if every vertex has at most one incoming arrow and at most one outgoing arrow. This is equivalent to requiring that every connected component be a directed path or directed cycle. There is no loss of generality in assuming the quiver is connected. 

The following two Propositions hold by Lemma \ref{L:cycle grading}.

\begin{prop} \label{P:acyclic} Suppose $Q$ is acyclic.
	Then $\cK^{\bi,\ba}_{\lad,\mu(\bullet)}(t_{Q_1})$ is a nonnegative integer times a single monomial.
\end{prop}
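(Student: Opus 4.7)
The plan is to invoke Lemma~\ref{L:cycle grading} and exploit acyclicity. By that lemma, $\cK^{\bi,\ba}_{\lad,\mu(\bullet)}(t_{Q_1})$ is a monomial in the arrow variables times a Laurent polynomial (with integer coefficients) in distinguished products of arrow variables along directed cycles of $Q$. When $Q$ is acyclic, no such cycles exist, so the Laurent polynomial collapses to a single integer constant $c$, yielding $\cK^{\bi,\ba}_{\lad,\mu(\bullet)}(t_{Q_1}) = c\cdot t^\nu$ for some monomial $t^\nu$ in the arrow variables.

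For $c \ge 0$, I would return to the Kostant partition formula \eqref{E:Kostant}. Write $c = \sum_{\wbul \in S^\bullet} (-1)^{\wbul} N_{\wbul}$, where $N_{\wbul}$ counts the Kostant partitions $m : R_{\bi,\ba} \to \bZ_{\ge 0}$ that satisfy the weight condition \eqref{E:Kostantweight} together with the monomial condition $\prod_\alpha t_\alpha^{m(\alpha)} = t^\nu$. The key input, already used in the proof of Lemma~\ref{L:cycle grading}, is that, for acyclic $Q$, the restriction of $\pr$ to $\sum_\alpha \Z\overline{\alpha}$ is injective. Combined with strict dominance of $\lad + \rhod$, this should force at most one $\wbul \in S^\bullet$ to contribute to any given monomial $t^\nu$, collapsing the signed sum to a single nonnegative count and giving $c \in \bZ_{\ge 0}$.

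The main obstacle I anticipate is cleanly handling parallel arrows and repeated basis indices $(p,q)$, which produce distinct roots $\alpha$ sharing the same $\overline{\alpha}$ or the same arrow parameter $t_\alpha$. These cause $N_{\wbul}$ to generally exceed one, but this multiplicity is intrinsically positive and does not disrupt the overall sign-cancellation argument. A more conceptual alternative, which bypasses the combinatorial bookkeeping entirely, is to verify higher cohomology vanishing for $\cW^{\mu(\bullet)}$ directly in the acyclic case: using the Leray spectral sequence for $p : \cW \to \Fl_{\bi,\ba}$ from \eqref{E:p} together with standard Borel--Weil--Bott on the complete flag product, one expects $H^{p}(\cW, \cW^{\mu(\bullet)}) = 0$ for $p > 0$ when $Q$ is acyclic. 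This would identify $c$ with a multiplicity in a genuine $\cG$-module, from which $c \ge 0$ is immediate.
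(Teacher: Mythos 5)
Your first step is the paper's entire argument: Proposition~\ref{P:acyclic} is derived solely from Lemma~\ref{L:cycle grading}, and for an acyclic nonbranching quiver the absence of directed cycles does collapse the Laurent polynomial to a constant, giving $\cK^{\bi,\ba}_{\lad,\mu(\bullet)}(t_{Q_1})=c\,t^\nu$ with $c\in\bZ$. One factual correction even here: $\pr$ is \emph{not} injective on $\sum_\alpha\bZ\overline{\alpha}$ for acyclic $Q$ --- it remembers only the arrow underlying each root $\alpha^a_{p,q}(k,\ell)$, so its kernel contains all differences of roots attached to the same arrow. What acyclicity buys is only that the arrow multiplicities $M(a)=\sum_{\alpha\,:\,t_\alpha=t_a}m(\alpha)$, hence the monomial $\prod_a t_a^{M(a)}$, are the same for every $\wbul$ and every Kostant partition $m$ in \eqref{E:Kostant}.

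The genuine gap is the claim $c\ge 0$. The fact that all contributions land on the single monomial $t^\nu$ does nothing to separate the different $\wbul\in S^\bullet$: in general many $\wbul$ of both signs admit Kostant partitions satisfying \eqref{E:Kostantweight}, and they all contribute to the same coefficient $c$, so the signed sum is not ``a single nonnegative count'' (the recurrence of \S\ref{SS:Morris} and Example~\ref{X:the example} exhibit exactly such cancellations). Worse, even when exactly one $\wbul$ contributes it need not be the identity: take any vertex $i$ with no loop, $\bi=(i,i)$, $\ba=(1,1)$, $\mu(\bullet)=((0),(2))$; then $H^{\bi,\ba}_{\mu(\bullet)}=S^{(i)}_0S^{(i)}_2\cdot 1=-s_{(1,1)}[X^{(i)}]$, so $\cK^{\bi,\ba}_{\lad,\mu(\bullet)}=-1$ for $\la^{(i)}=(1,1)$. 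This also defeats your cohomological alternative: higher cohomology of $\cW^{\mu(\bullet)}$ does not vanish for such non-$(\bi,\ba)$-dominant $\mu(\bullet)$, and under the dominance hypothesis the desired vanishing is precisely Conjecture~\ref{conj:positivity}, which is open. In fairness, the paper supplies nothing beyond the citation of Lemma~\ref{L:cycle grading} either, and the example above indicates that the nonnegativity of $c$ requires an $(\bi,\ba)$-dominance hypothesis not visible in the statement; but neither of the two routes you sketch establishes it.
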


\begin{prop} \label{P:cyclic form}
	Let $Q$ be the cyclic quiver $\bZ/r\bZ$. Then there is a unique Laurent monomial of the form $\prod_{i=0}^{r-2} t_{i,i+1}^{a_i}$ and a unique polynomial $\cK^\red_{\lad,\mu(\bullet)}(t)\in \Z[t]$ with $t=t_{01}t_{12}\dotsm t_{r-1,0}$ such that
	\begin{align}\label{E:single loop}
		\cK_{\lad,\mu(\bullet)}^{\bi,\ba}(t_{Q_1}) = \left(\prod_{i=0}^{r-2} t_{i,i+1}^{a_i}\right)
		\cK^\red_{\lad,\mu(\bullet)}(t).
	\end{align}
	We call $\cK^\red_{\lad,\mu(\bullet)}(t)$ the reduced Kostka-Shoji polynomial.
\end{prop}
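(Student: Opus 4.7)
The plan is to reduce this proposition to a direct consequence of Lemma \ref{L:cycle grading}, by identifying the ``cycle sublattice'' of the exponent lattice for the cyclic quiver $\bZ/r\bZ$. Every simple directed cycle in $Q=\bZ/r\bZ$ is the full loop $0\to 1\to\dotsm\to (r-1)\to 0$, so the circulation lattice (generated by directed cycles in the proof of Lemma \ref{L:cycle grading}) is a rank-one sublattice of $\bZ^{Q_1}=\bZ^r$ generated by the all-ones vector $(1,1,\dotsc,1)$. This vector is precisely the exponent vector of the single cycle monomial $t=t_{01}t_{12}\dotsm t_{r-1,0}$.

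Next I would make this concrete. Let $c=(c_0,\dotsc,c_{r-1})$ denote the exponent of a monomial $\prod_{i} t_{i,i+1}^{c_i}$ of $\cK^{\bi,\ba}_{\lad,\mu(\bullet)}(t_{Q_1})$. By Lemma \ref{L:cycle grading}, any two exponent vectors $c,c'$ of nonzero terms satisfy $c-c'\in\bZ(1,1,\dotsc,1)$. Fix any nonzero monomial with exponent $c$ and set $a_i := c_i - c_{r-1}$ for $0\le i\le r-2$; these integers depend only on $(\bi,\ba,\lad,\mu(\bullet))$. Writing $n_c$ for the coefficient of $\prod_{i}t_{i,i+1}^{c_i}$ in $\cK^{\bi,\ba}_{\lad,\mu(\bullet)}$, define
\begin{equation*}
P(t)\;:=\;\sum_{k\in\bZ} n_{(a_0+k,\,\dotsc,\,a_{r-2}+k,\,k)}\, t^k.
\end{equation*}
This sum is finite, and since $\cK^{\bi,\ba}_{\lad,\mu(\bullet)}\in\bZ[t_{Q_1}]$ has nonnegative exponents at each $t_{i,i+1}$, the index $k=c_{r-1}$ is always nonnegative, so $P(t)\in\bZ[t]$. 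This yields the factorization \eqref{E:single loop} with the monomial $\prod_{i=0}^{r-2}t_{i,i+1}^{a_i}$ (possibly Laurent, since some $a_i$ may be negative).

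For uniqueness, suppose $\prod_{i=0}^{r-2}t_{i,i+1}^{a_i}\,P(t)=\prod_{i=0}^{r-2}t_{i,i+1}^{a_i'}\,P'(t)$ with $a_i,a_i'\in\bZ$ and $P,P'\in\bZ[t]$. Comparing the exponent of $t_{r-1,0}$ on both sides uses only the $t$-factors (since the prefix monomials do not involve $t_{r-1,0}$); this pins down $P$ and $P'$ to have the same degree and leading term, and then comparing the exponent of each $t_{i,i+1}$ for $0\le i\le r-2$ forces $a_i=a_i'$, whence $P=P'$. The main point is that among the $r$ arrow variables, $t_{r-1,0}$ is singled out as the one excluded from the monomial prefix, which removes all ambiguity from the cyclic symmetry. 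There is no significant obstacle to overcome here; this proposition is essentially a packaging of Lemma \ref{L:cycle grading} for the cyclic case.
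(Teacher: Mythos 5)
Your proof is correct and follows exactly the route the paper intends: the paper's entire justification is the single sentence that Propositions \ref{P:acyclic} and \ref{P:cyclic form} ``hold by Lemma \ref{L:cycle grading},'' and you have simply filled in the routine details (the circulation lattice of $\bZ/r\bZ$ is generated by the all-ones vector, normalize by the exponent of $t_{r-1,0}$, and use that the Kostka-Shoji polynomial lies in $\bZ[t_{Q_1}]$ to get $P(t)\in\bZ[t]$). No discrepancy with the paper's argument.
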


A $Q_0$-multitableau $T^\bullet$ is a tuple of semistandard tableaux $T^{(i)}$ for $i\in Q_0$.

Say that the $Q_0$-multitableau $T^\bullet$ admits $\cat_{(i,a,\mu)}$ if
\begin{enumerate}
	\item The restriction of $T^\bullet$ to the entries $\{1,2,\dotsc,a\}$ is the Yamanouchi tableau $Y_\mu$. In this case
	let $T_n$ be the first $a_1$ rows of $T^{(i)}-Y_\mu$ and $T_s$ the rest.
	\item If $\Out(i)=\emptyset$ then $T_n=\emptyset$.
\end{enumerate}
If $T^\bullet$ admits $\cat_{(i,a,\mu)}$ then 
$\cat_{(i,a,\mu)}(T^\bullet)$ is the $Q_0$-multitableau obtained as follows.
\begin{itemize}
	\item The $i$-th tableau is replaced by $T_s$. 
	\item If $\{(i,j)\}=\Out(i)$ (including the case $i=j$) then $T_n$ is Schensted column inserted into the $j$-th tableau. 
	\item The other tableaux remain the same.
\end{itemize}
Say that a multitableau $T^\bullet$ is $(\bi,\ba,\mu(\bullet))$-catabolizable if either $(\bi,\ba,\mu(\bullet))$ is the empty triple and $T^\bullet$ is the empty multitableau or $T^\bullet$ admits $\cat_{(i_1,a_1,\mu(1))}$ and 
$\cat_{(i_1,a_1,\mu(1))}(T^\bullet)$ is $(\hbi,\hba,\hmu(\bullet))$-catabolizable with respect to the alphabet $\{a_1+1,a_1+2,\dotsc\}$ where $\hbi,\hba,\hmu(\bullet)$ are obtained from $\bi,\ba,\mu(\bullet)$ by removing $i_1,a_1,\mu(1)$.

The weight $\wt(T^\bullet)$ of a $(\bi,\ba,\mu(\bullet))$-catabolizable multitableau $T^\bullet$ is by definition the product of the monomials $t_{i,j}^{|T_n|}$ that count the number of moving entries in the applications of the $\cat_{(i_k,a_k,\mu(k))}$.

\begin{conj} \label{conj:cat}
	For $Q_0$ having at most one incoming and one outgoing arrow from each vertex, and for a triple $(\bi,\ba,\mu(\bullet))$ such that $\mu(\bullet)$ is $(\bi,\ba)$-dominant,
	\begin{align}\label{E:cj}
		\cK^{\bi,\ba}_{\lad,\mu(\bullet)}(t_{Q_1}) = 
		\sum_{T^\bullet} \wt(T)
	\end{align}
	where $T^\bullet$ runs over the $(\bi,\ba,\mu(\bullet))$-catabolizable multitableaux of shape $\lad$.
\end{conj}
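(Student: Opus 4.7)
The plan is to proceed by induction on the length $m$ of the sequence $(\bi,\ba,\mu(\bullet))$. The base case $m=0$ is immediate: both sides equal $1$ when $\lad=\varnothing$ and vanish otherwise. For the inductive step, the idea is to invoke the recurrence derived in Section~\ref{SS:Morris}, specialized to the nonbranching setting. Assuming $\Out(i_1)=\{(i_1,j)\}$ consists of a single arrow, the sum over $\beta^\bullet\in\bY^{\Out(i_1)}$ collapses to a single partition $\beta$, and the recurrence becomes
\begin{align*}
\cK^{\bi,\ba}_{\lad,\mu(\bullet)}(t_{Q_1}) = \sum_{w,\,\beta,\,\gd} (-1)^{w}\, t_{i_1,j}^{|\beta|}\, c^{\alpha(w)}_{\mu(1),\beta}\, c^{\la^{(j)}}_{\gamma^{(j)},\beta}\, \cK^{\hbi,\hba}_{\gd,\hmu(\bullet)}(t_{Q_1}),
\end{align*}
where $w$ runs over $S_n^{a_1}$, $\gamma^{(i_1)}=\zeta(w)$, $\gamma^{(j')}=\la^{(j')}$ for $j'\notin\{i_1,j\}$, and $c^\cdot_{\cdot,\cdot}$ denotes Littlewood-Richardson coefficients. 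The cases $\Out(i_1)=\varnothing$ and $\Out(i_1)=\{(i_1,i_1)\}$ (a loop) are handled analogously by dropping or modifying the $c^{\la^{(j)}}_{\gamma^{(j)},\beta}$ factor.

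Next I would show that the operation $\cat_{(i_1,a_1,\mu(1))}$ implements a weight-preserving correspondence between $(\bi,\ba,\mu(\bullet))$-catabolizable $T^\bullet$ of shape $\lad$ and pairs $(U^\bullet,\beta)$ consisting of a $(\hbi,\hba,\hmu(\bullet))$-catabolizable $U^\bullet$ of shape $\gd$ together with the skew component $\beta$, where $\gamma^{(i_1)}$ is the last $n-a_1$ rows of $\la^{(i_1)}$ (the straight shape $\zeta(\id)$), $\gamma^{(j)}$ is obtained by column-uninserting a skew tableau of shape $\beta$ from $T^{(j)}$, and the remaining components are unchanged. Standard facts about column insertion and LR coefficients then imply that the number of $T^\bullet$ fibering over a given $U^\bullet$ with prescribed $|\beta|$ is $c^{\la^{(j)}}_{\gamma^{(j)},\beta}\,c^{\alpha(\id)}_{\mu(1),\beta}$, and $\wt(T^\bullet) = t_{i_1,j}^{|\beta|}\,\wt(U^\bullet)$. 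Applying the inductive hypothesis to $U^\bullet$ thereby identifies the tableau sum with exactly the $w=\id$ contribution in the displayed recurrence.

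The main obstacle will be the sign cancellation of the terms with $w\ne\id$. Already for the single loop quiver this reduces to the still-open Shimozono-Weyman conjecture \cite[Conjecture 27]{SW}, so any complete solution must subsume that case. The natural approach is to construct a sign-reversing involution on the non-catabolizable output of the recurrence: interpret each $(-1)^w$ as arising from repeated application of the Bernstein anticommutation $S_{p-1}S_{q+1}=-S_qS_p$ at vertex $i_1$, and pair up summands via jeu-de-taquin slides on the northern region of $T^{(i_1)}$. The new feature beyond the loop case is the arrow LR factor $c^{\la^{(j)}}_{\gamma^{(j)},\beta}$; the plan is to design a ``quiver insertion'' that simultaneously modifies the reading words of $T^{(i_1)}$ and $T^{(j)}$ in a Knuth-equivalent manner, so that the joint LR counts are preserved along the fibers of the involution.

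As an alternative avenue, one could attempt a geometric proof via a $T^\bullet$-indexed affine paving of $\cW_{\bi,\ba}$ extending Reineke's paving \cite{R} on the acyclic components and the cyclic pavings of \cite{ADK,Sch}, then verify compatibility with the twisting bundle $\cW^{\mu(\bullet)}$. A success here would yield Conjectures~\ref{conj:positivity} and~\ref{conj:cat} in one stroke, but identifying the cell labels explicitly with catabolizable multitableaux (including the $t_{i,j}^{|T_n|}$ weighting) appears to require substantial new input, and reduces in the single loop case to finding a geometric realization of the Shimozono-Weyman formula.
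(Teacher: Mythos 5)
The statement you are addressing is a \emph{conjecture} in the paper (Conjecture~\ref{conj:cat}); the paper offers no proof, only the recurrence of \S\ref{SS:Morris} and the worked Examples~\ref{X:the example} and~\ref{X:tableaux} as evidence. Your proposal correctly reconstructs the intended mechanism: the recurrence obtained by peeling off the first current, the identification of the $w=\id$ term with the catabolism step (reversing $\cat_{(i_1,a_1,\mu(1))}$ splits $T^{(i_1)}$ into $Y_{\mu(1)}$, a northern part column-uninserted from $T^{(j)}$ governed by $c^{\la^{(j)}}_{\gamma^{(j)},\beta}$, and a southern part $\gamma^{(i_1)}=\zeta(\id)$), and the weight bookkeeping $\wt(T^\bullet)=t_{i_1,j}^{|\beta|}\wt(U^\bullet)$. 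This matches exactly what the paper's Example~\ref{X:tableaux} illustrates, including the phenomenon that the $w\ne\id$ terms must cancel against non-semistandard ``tableaux'' whose reading words are Knuth-equivalent to the outputs of the $w\ne\id$ summands.

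However, this is not a proof, and you say so yourself: the entire difficulty is the sign-reversing involution cancelling the $w\ne\id$ contributions against the failures of semistandardness, and you leave that step as a plan. Since the single loop case already specializes to the open \cite[Conjecture 27]{SW}, no argument that merely defers this cancellation can close the gap. The geometric alternative you sketch (a $T^\bullet$-indexed affine paving of $\cW_{\bi,\ba}$ compatible with $\cW^{\mu(\bullet)}$) is likewise only a program. So the proposal should be read as a correct and well-aligned reduction of the conjecture to its known hard core, not as a proof; the statement remains open, as the paper presents it.
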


\begin{rem} 
	\begin{enumerate}
		\item If $Q_0$ is a single loop then Conjecture \ref{conj:cat} reduces to \cite[Conjecture 27]{SW}. If in addition every partition $\mu(k)$ is a rectangle, there are several explicit combinatorial formulas for $\cK^{\bi,\ba}_{\lad,\mu(\bullet)}(t_{Q_1})$, the closest in spirit to catabolizable being \cite[Theorem 21]{S:multi}. The precise catabolizable condition was proved for rectangles of a fixed width \cite[Proposition 24]{S:multi}, where the polynomials are the graded isotypic components of the $GL_n\times \bC^*$-module given by the coordinate ring of a nilpotent adjoint orbit closure in $\mathfrak{gl}_n$. When the common width of rectangles is $1$ we obtain the cocharge Kostka polynomials $\tilde{K}_{\la^t,\mu}(t)=t^{n(\mu)}K_{\la^t,\mu}(t^{-1})$ where $n(\mu)=\sum_{i=1}^{\ell(\mu)} (i-1)\mu_i$ \cite{Las} \cite{S:multi}.
		For the easier class of Littlewood-Richardson tableaux (again for rectangles) one obtains the one-dimensional sums from type $A_{n-1}^{(1)}$ Kirillov-Reshetikhin crystals \cite{KiSh} \cite{S:poset} \cite{S:affine} \cite{ScW}. Finally, when every $\mu(k)$ is a single row, we obtain the Kostka-Foulkes polynomial $K_{\la,\mu}(t)$ \cite{LS}.
		\item For a cyclic quiver with $r=2$ vertices, the special case of Conjecture~\ref{conj:cat} in the setting of Example~\ref{X:cycle} and with $\mu(\bullet)$ concentrated at a single vertex was stated and proved in \cite{ShL}. See Example~\ref{X:cyclic vanishing} for more remarks on the cyclic quiver setting (for $r\ge 2$).
		\item For the $A_2$ quiver ($Q_0=\{0,1\}$ and $Q_1 = \{(0,1)\}$), with $\bi=(0,1,0,1,\dotsc)$, $\ba=(1,1,1,1,\dotsc)$, and $\mu(k)$ a single row partition for all $k$, an explicit combinatorial formula for $\cK^{\bi,\ba}_{\lad,\mu(\bullet)}(t_{01})$ was proved in \cite{Cr}.
	\end{enumerate}
\end{rem}

\subsection{An example}
We believe that the construction for the cyclic quiver sheds light on the single loop case of parabolic Kostka polynomials. The main new feature is that the intermediate results of catabolism produce meaningful terms in the quiver Kostka-Shoji polynomials in which the first current $(i_1,a_1,\mu(1))$ has been removed. 

\begin{ex}\label{X:tableaux}
	Take the data of Example \ref{X:the example}. It is convenient to have the following canonical tableaux in mind. They are of the shapes of $\mu(\bullet)$, where $a=10$ and $b=11$ as in hexadecimal notation:
	\begin{equation}
	\tableau[sty]{1&1&1&1\\2&2} \quad \varnothing \quad \tableau[sty]{5&5\\6&6}\quad\varnothing \quad\tableau[sty]{9&9\\a\\b}
	\end{equation}
	
	For $\gd = (3,1,1)\otimes(3)$ we have bitableaux
	\begin{equation*}
		\tableau[sty]{5&5&9\\6\\9} \otimes \tableau[sty]{6&a&b}\,, \quad
		\tableau[sty]{5&5&a\\6\\b} \otimes \tableau[sty]{6&9&9}\,, \quad
		\tableau[sty]{5&5&b\\6\\9} \otimes \tableau[sty]{6&9&a}
	\end{equation*}
	which have weights $t_{10}^3 t^2$, $t_{10}^3 t^3$, and $t_{10}^3 t^2$ respectively. Under the isomorphism
	\begin{equation}\label{E:skew iso}
	s_{\tableau[pby]{ & \\ \\ }}[U] + s_{\tableau[pby]{&&}}[U]\cong
	s_{\tableau[pby]{\fl&\fl&\fl&\fl& & \\ \fl&\fl& }}[U]  
	\end{equation}
	each tableau at vertex $1$ factors as
	\begin{equation*}
		\tableau[scy]{x&y&z} \mapsto 	\tableau[scy]{\fl&\fl&\fl&\fl& y& z\\ \fl&\fl& x} 
	\end{equation*}
	and therefore these produce (by reversing $\cat_{0,2,(4,2)}$) bitableaux with empty factor at vertex $1$ and the following factors at vertex $0$:
	\begin{equation*}
		\tableau[sby]{1&1&1&1&a&b\\ 2&2&6 \\ 5&5&9 \\ 6 \\ 9} \quad
		\tableau[sby]{1&1&1&1&9&9\\ 2&2&6 \\ 5&5&a \\ 6 \\ b} \quad
		\tableau[sby]{1&1&1&1&9&a\\ 2&2&6 \\ 5&5&b \\ 6 \\ 9}
	\end{equation*}
	These three bitableaux have weights $t^5,t^6,t^5$ respectively. 
	
	For $\gd = ((3,1,1),(2,1))$ we have 7 bitableaux:
	\begin{equation*}
		\tableau[sty]{5&5&a\\6\\b} \otimes \tableau[sty]{6&9\\9}\,, \quad
		\tableau[sty]{5&5&b\\6\\b} \otimes \tableau[sty]{6&9\\a}\,, \quad
		\tableau[sty]{5&5&b\\6\\b} \otimes \tableau[sty]{6&a\\9}
	\end{equation*}
	\begin{equation*}
		\tableau[sty]{5&9&a\\6\\b} \otimes \tableau[sty]{5&9\\6}\,, \qquad
		\tableau[sty]{5&9&b\\6\\9} \otimes \tableau[sty]{5&a\\6}
	\end{equation*}
	\begin{equation*}
		\tableau[sty]{5&5&9\\6\\9} \otimes \tableau[sty]{6&a\\b}\,, \qquad
		\tableau[sty]{5&5&9\\6\\b} \otimes \tableau[sty]{6&9\\a}
	\end{equation*}
	Under \eqref{E:skew iso}
	each tableau at vertex $1$ factors as
	\begin{equation*}
		\tableau[scy]{x&y\\ z} \mapsto 
		\tableau[scy]{\fl&\fl&\fl&\fl& x& y\\ \fl&\fl& z}.
	\end{equation*}
	This produces the following ``tableaux" at vertex $0$; the empty tableau is put at vertex $1$ in all cases.
	\begin{equation*}
		\tableau[sty]{1&1&1&1&6&9\\2&2&9\\5&5&a\\6\\b}\quad
		\tableau[sty]{1&1&1&1&6&9\\2&2&a\\5&5&b\\6\\b}\quad
		\tableau[sty]{1&1&1&1&6&a\\2&2&9\\5&5&b\\6\\b}
	\end{equation*}
	\begin{equation*}
		\tableau[sty]{1&1&1&1&5&9\\2&2&6\\5&9&a\\6\\b}\quad
		\tableau[sty]{1&1&1&1&5&a\\2&2&6\\5&9&b\\6\\9}
	\end{equation*}
	\begin{equation*}
		\tableau[sty]{1&1&1&1&6&a\\2&2&\tf{b}\\5&5&\tf{9}\\6\\9} \qquad
		\tableau[sty]{1&1&1&1&6&9\\2&2&\tf{a}\\5&5&\tf{9}\\6\\b}
	\end{equation*}
	The last two are not valid tableaux as they fail semistandardness.
	The weights of the other five bitableaux are respectively $t^5,t^5,t^4,t^6,t^5$.
	
	Finally, to account for the two bitableaux that must be canceled, consider $\gd=((4,1,1),(2))$. For $\cK^{\hbi,\hba}_{\gd,\hmu(\bullet)}(t_{01},t_{10})$ we have the bitableaux
	\begin{equation*}
		\tableau[sty]{5&5&9&b\\6\\9} \otimes \tableau[sty]{6&a}\qquad
		\tableau[sty]{5&5&9&a\\6\\b} \otimes \tableau[sty]{6&9}
	\end{equation*}
	producing bitableaux which are empty at vertex $1$ and at vertex $0$ are given by
	\begin{equation*}
		\tableau[sby]{1&1&1&1&6&a\\2&2\\ 5&5&\tf{9}&\tf{b}\\6\\9}\qquad
		\tableau[sby]{1&1&1&1&6&9\\2&2\\ 5&5&\tf{9}&\tf{a}\\6\\b}
	\end{equation*}
	The row-reading words of these two objects are Knuth-equivalent to the row-reading words of the two nontableaux above.
\end{ex}

\section{Shuffle products and $K$-theoretic Hall algebras}\label{S:shuffle}

Let us now explain how our constructions relate to shuffle products and $K$-theoretic Hall algebras.

\subsection{Shuffle products}
Recall the definition of $\hR(\cG_{\nu^\bullet})$ from Section~\ref{SS:torus weights} and let
\begin{align}
\cS_Q &= \bigoplus_{\nu^\bullet\in\Z_{\ge 0}^{Q_0}} R(\cG_{\nu^\bullet})\\
\hcS_Q &= \bigoplus_{\nu^\bullet\in\Z_{\ge 0}^{Q_0}} \hR(\cG_{\nu^\bullet}) = \cS_Q\otimes_{\Z[t_{Q_1}^{\pm 1}]}\Z((t_{Q_1})).
\end{align}
We consider two shuffle products
\begin{align}
* : \cS_Q \times \cS_Q \rightarrow \cS_Q\\
\star : \hcS_Q \times \hcS_Q \rightarrow \hcS_Q
\end{align}
which are $R(T^{Q_1})$-linear (resp., $\mathrm{Frac}(R(T^{Q_1}))$-linear for $\star$) and are defined on elements
\begin{align*}
f\in R(G_{\alpha^\bullet}) &=\Z[(u^{(i)}_k)^{\pm 1} \mid i\in Q_0, 1\le k\le \alpha^{(i)}]^{S_{\alpha^\bullet}}\\
g\in R(G_{\beta^\bullet}) &= \Z[(v^{(i)}_k)^{\pm 1} \mid i\in Q_0, 1\le \ell\le\beta^{(i)}]^{S_{\beta^\bullet}}
\end{align*}
as follows:
\begin{align}\label{E:shuffle-ast}
f * g &=D_{w_0^\bullet}\left(f(u)g(v)\prod_{\substack{b\in Q_1\\k,\ell}}(1-t_{b}v_k^{(tb)}/u_\ell^{(hb)})\right)\in R(\cG_{\alpha^\bullet+\beta^\bullet})\\
f \star g &= D_{w_0^\bullet}\left(f(u)g(v)\prod_{\substack{b\in Q_1\\k,\ell}}\frac{1}{1-t_{b}u^{(tb)}_k/v^{(hb)}_\ell}\right)\in \hR(\cG_{\alpha^\bullet+\beta^\bullet})
\end{align}
where $k,\ell$ in the products run over $1\le k\le \beta^{(tb)}$ and $1\le \ell\le \alpha^{(hb)}$ in $*$ and $1\le k\le \alpha^{(tb)}$ and $1\le \ell\le \beta^{(hb)}$ in $\star$. We identify $R(G_{\alpha^\bullet+\beta^\bullet})$ with Laurent polynomials which are symmetric in the variables $$\{u^{(i)}_k,v^{(i)}_\ell \mid 1\le k\le \alpha^{(i)},1\le \ell\le \beta^{(j)}\}$$ for each $i\in Q_0$. Accordingly, $D_{w_0^\bullet}=\prod_{i\in Q_0}D_{w_0^{(i)}}$ is the Demazure symmetrizer~\eqref{E:Dw0} with respect to each of these sets of variables.

One may verify directly from the definitions that $\ast$ and $\star$ are associative. They are related as follows. Consider the maps
\begin{align}
R(\cG_{\nu^\bullet}) &\overset{\tau_{\nu^{\bullet}}}{\longrightarrow} \hR(\cG_{\nu^\bullet}),\qquad
 [W] \mapsto [W\otimes_{\bC} \Sym(E_{\nu^\bullet}^\vee)]
\end{align}
Then one has a commutative diagram:
\begin{center}
\begin{tikzcd}
R(\cG_{\alpha^\bullet})\times \arrow{d}[swap]{\tau_{\alpha^\bullet}\times\tau_{\beta^\bullet}} R(\cG_{\beta^\bullet}) \arrow[r,"*"] & R(\cG_{\alpha^\bullet+\beta^\bullet})\arrow{d}{\tau_{\alpha^\bullet+\beta^\bullet}}\\
\hR(\cG_{\alpha^\bullet})\times \hR(\cG_{\beta^\bullet}) \arrow[r,"\star"] & \hR(\cG_{\alpha^\bullet+\beta^\bullet}). 
\end{tikzcd}
\end{center}

\begin{rem}
For cyclic quivers and with all parameters $t_a\equiv t$ equal, the shuffle product $*$ coincides with a specialization of that of Negut \cite{Ne}, which is a ``trigonometric degeneration'' of the original shuffle algebras of Feigin and Odesskii~\cite{FO}. For a more transparent match with \eqref{E:shuffle-ast}, see also \cite[Proof of Proposition 2.3]{Ne-rev}, where the relevant specialization is $q_1=q=0$ and $q_2=t$. Similar connections exist in the Jordan quiver case of \cite{SV} \cite{FT}. However, an important distinction must be made: these works involve the study of a subalgebra $\cS_Q^+\subset \cS_Q$ consisting of symmetric Laurent polynomials satisfying a ``wheel condition.'' In \cite{Ne}, it is  ultimately shown that the Drinfeld double of $\cS_Q^+$ for the cyclic quiver with $r$ vertices is isomorphic to the quantum toroidal algebra $\Utor$.
\end{rem}

\begin{rem}
For general quivers the shuffle product $*$ is a $K$-theoretic variant of the product in the cohomological Hall algebra \cite[Theorem 2.2]{KS}, with additional equivariance coming from the torus $T^{Q_1}$. We will make this connection more precise in Section \ref{S:K-HA} below. In Section~\ref{SS:qt-preprojective} we will also consider the $(q,t)$-version of the shuffle product $\star$, which corresponds to the preprojective $K$-theoretic Hall algebra~\cite{YZ} and that of \cite{Ne} for cyclic quivers and \cite{SV} \cite{FT} for the Jordan quiver.
\end{rem}

\subsubsection{Quiver Hall-Littlewood series as a shuffle product}

Fix data $(\bi,\ba,\mu(\bullet))$ and define variables $u=u(1)+u(2)+\dotsm+u(m)$ exactly as in Section~\ref{SS:big partial flag}, using $u$ instead of $x$. These are the auxiliary variables of Section~\ref{SS:quiver-sym-fun}. Let $\chi^{\bi,\ba}_{\mu(\bullet)}(u)$ be the quiver Hall-Littlewood series in these variables. For each $1\le k\le m$ let $\al^\bullet(k)=\nu^\bullet((i_k),(a_k))$ be the dimension vector with $\alpha^{(i)}(k)=a_k\delta_{i,i_k}$.

Using formula \eqref{E:chi-D} for the quiver Hall-Littlewood series, it is easy to prove the following by induction on $m$:

\begin{prop}\label{P:quiver-kostka-shuffle}
For any $(\bi,\ba,\mu(\bullet))$, the quiver Hall-Littlewood series is given by a shuffle product:
\begin{align}
\chi^{\bi,\ba}_{\mu(\bullet)}(u) &= s_{\mu(1)}[u(1)] \star \dotsm \star s_{\mu(m)}[u(m)]
\end{align}
where each $s_{\mu(k)}[u(k)]$ is understood as an element of $R(G_{\alpha^\bullet(k)})$.
\end{prop}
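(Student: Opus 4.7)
The plan is to proceed by induction on $m$, using formula \eqref{E:chi-D} for the quiver Hall-Littlewood series throughout. The base case $m=1$ is immediate: $B_{\bi,\ba}=1$, so $\chi^{(i_1),(a_1)}_{\mu(1)}(u) = D_{w_0^\bullet}(u(1)^{\mu(1)}) = s_{\mu(1)}[u(1)]$ by the Weyl character formula.

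For the inductive step, let $(\hbi,\hba,\hmu(\bullet))$ denote the triple with the first current removed, set $u' = u(2) \sqcup \dotsm \sqcup u(m)$, and write $D_{w_0^\bullet}^{u'}$ for the Demazure symmetrizer over the parabolic subgroup acting on the $u'$ variables at each vertex. The induction hypothesis reads
\begin{align*}
R := s_{\mu(2)}[u(2)] \star \dotsm \star s_{\mu(m)}[u(m)] = D_{w_0^\bullet}^{u'}\Bigl(\prod_{k=2}^m u(k)^{\mu(k)}\, B_{\hbi,\hba}\Bigr),
\end{align*}
and the definition of $\star$ gives $s_{\mu(1)}[u(1)] \star R = D_{w_0^\bullet}(s_{\mu(1)}[u(1)] \cdot R \cdot F)$, where
\begin{align*}
F = \prod_{j=2}^m \prod_{b:\, i_1\to i_j} \prod_{p,q} (1-t_b u(1)_p/u(j)_q)^{-1}.
\end{align*}
Direct inspection of \eqref{E:Bia} yields $B_{\bi,\ba} = B_{\hbi,\hba} \cdot F$.

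Two facts finish the argument. First, $F$ is symmetric in the $u'$ variables under the parabolic Weyl group at each vertex: for each fixed $(b,p)$, the factor $\prod_q (1-t_b u(1)_p/u(j)_q)^{-1}$ is symmetric across all $u(j)_q$ lying in $u'$ at vertex $hb$. Similarly, the factors of $B_{\bi,\ba}$ involving $u(1)$ are $S_{a_1}$-symmetric in $u(1)$. Hence such symmetric factors commute past the corresponding parabolic Demazure operators. Second, the standard absorption identity $D_{w_0^\bullet} \circ D_{w_0^\bullet}^J = D_{w_0^\bullet}$ holds for every parabolic $J \subset S^\bullet$; it follows from the coset decomposition of the antisymmetrizer $J^\bullet$ together with the fact that $D_{w_0^\bullet}^J(f)$ is automatically $J$-symmetric. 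Combining these, we move $F$ inside $D_{w_0^\bullet}^{u'}$, absorb $D_{w_0^\bullet}^{u'}$ into the outer $D_{w_0^\bullet}$, then use the base case to write $s_{\mu(1)}[u(1)] = D_{w_0^\bullet}^{u(1)}(u(1)^{\mu(1)})$, move $\prod_{k=2}^m u(k)^{\mu(k)} B_{\bi,\ba}$ inside $D_{w_0^\bullet}^{u(1)}$, and absorb that inner Demazure as well. The end result is $D_{w_0^\bullet}(u^{\mu(\bullet)} B_{\bi,\ba}) = \chi^{\bi,\ba}_{\mu(\bullet)}(u)$.

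The only nontrivial ingredient is the Demazure absorption identity for parabolic subgroups, which is a short computation with $J^\bullet$. Everything else is routine bookkeeping of the symmetries of $B_{\bi,\ba}$ and $F$.
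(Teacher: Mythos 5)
Your proof is correct and follows exactly the route the paper intends: induction on $m$ via formula \eqref{E:chi-D}, using that $B_{\bi,\ba}=B_{\hbi,\hba}\cdot F$ with $F$ matching the $\star$-kernel, that $F$ is invariant under the parabolic Weyl group of the $u'$ (resp.\ $u(1)$) variables, and the absorption identity $D_{w_0^\bullet}D_{w_0^\bullet}^{J}=D_{w_0^\bullet}$ — precisely the mechanism the paper displays in Example~\ref{ex:chi-shuffle}. The paper only asserts the result is ``easy to prove by induction on $m$''; your write-up supplies the details of that same argument, with the key symmetry checks done correctly.
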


More generally, for any triples $(\bi',\ba',\mu'(\bullet))$ and $(\bi'',\ba'',\mu''(\bullet))$, let $\bi=\bi'\bi''$, $\ba=\ba'\ba''$, $\mu(\bullet)=\mu'(\bullet)\mu''(\bullet)$ be given by concatenation. Then one can show that
\begin{align}\label{E:chi-concat}
\chi^{\bi,\ba}_{\mu(\bullet)} = \chi^{\bi',\ba'}_{\mu'(\bullet)} \star \chi^{\bi'',\ba''}_{\mu''(\bullet)}
\end{align}
where $\chi^{\bi',\ba'}_{\mu'(\bullet)}\in \hR(\cG_{\nu^\bullet(\bi',\ba')})$ and $\chi^{\bi'',\ba''}_{\mu''(\bullet)}\in \hR(\cG_{\nu^\bullet(\bi'',\ba'')})$.

\begin{ex}\label{ex:chi-shuffle}
Consider the single-arrow quiver: $Q_0=\{0,1\}$ and $Q_1=\{(0,1)\}$. Take $\bi=(0,0,1)$ and $\ba=(2,1,1)$, so that:
\begin{align*}
u(1) = \{u^{(0)}_1,u^{(0)}_2\},\qquad
u(2) = \{u^{(0)}_3\},\qquad 
u(3) = \{u^{(1)}_1\}.
\end{align*}
Then for any $\mu(\bullet)$ we have
\begin{align*}
\chi_{\mu(\bullet)}^{\bi,\ba}(u) &= D_{w_0^\bullet}\left(\frac{u(1)^{\mu(1)}u(2)^{\mu(2)}u(3)^{\mu(3)}}{(1-t_{01}u^{(0)}_1/u_1^{(1)})(1-t_{01}u^{(0)}_2/u_1^{(1)})(1-t_{01}u^{(0)}_3/u^{(1)}_1)}\right)\\
&= s_{\mu(1)}[u(1)] \star D_{w_0^\bullet}^{2,3}\left(u(2)^{\mu(2)}u(3)^{\mu(3)}(1-t_{01}u^{(0)}_3/u^{(1)}_1)^{-1}\right)\\
&= s_{\mu(1)}[u(1)] \star s_{\mu(2)}[u(2)] \star s_{\mu(3)}[u(3)]
\end{align*}
where $D_{w_0^\bullet}$ is the Demazure symmetrizer with respect to all variables, while $D_{w_0^\bullet}^{2,3}$ is that only for the variables $u(2)+u(3)$. Here we use well-known properties such as $D_{w_0^\bullet}=D_{w_0^\bullet}D_{w_0^\bullet}^{2,3}$ as operators and that $D_{w_0^\bullet}^{2,3}$ commutes with the $u(1)$ variables. We note that the second equality corresponds to \eqref{E:chi-concat} with $\bi'=(0),\ba'=(2)$ and $\bi''=(0,1),\ba''=(1,1)$.
\end{ex}

Proposition~\ref{P:quiver-kostka-shuffle} leads to another interpretation of the quiver Kostka-Shoji polynomials $\cK_{\lad,\mu(\bullet)}^{\bi,\ba}(t_{Q_1})$: they are the structure constants of iterated products in the shuffle algebra $\hcS_Q$ with respect to Schur polynomials, each supported at a single vertex.

\subsubsection{Connection to quiver currents}\label{SS:current-shuffle}

More generally, we can strengthen Proposition~\ref{P:quiver-kostka-shuffle} to connect the action of a quiver current on $\Lambda^Q$ to a corresponding shuffle product. Let $i\in Q_0$, $a\in\Z_{>0}$, and $\mu\in X_+(GL_a)$. We will consider the action of $H^{(i,a)}_\mu$ on a tensor Schur function $s_{\xi^\bullet}[X^\bullet]$.

Let $\alpha^\bullet = \nu^\bullet((i),(a))$ be the dimension vector with $\alpha^{(i)}=a$ and $\alpha^{(j)}=0$ otherwise, and let $\beta^\bullet$ be an arbitrary dimension vector. Let $U=(u^{(i)}_1,\dotsc,u^{(i)}_a)$ and $V=(v^{(i)}_k)_{i\in Q_0,1\le k\le\beta^{(i)}}$ be auxiliary variables. Let $v^{(i)}=(v^{(i)}_k)_{1\le k\le\beta^{(i)}}$ be the variables of $V$ at vertex $i$.

The Proposition below shows that the action $H^{(i,a)}_\mu$ is a lifting to quiver symmetric functions of the operation of shuffle product by $s_\mu[U]$, regarded as an element of $R(G_{\alpha^\bullet})$. More generally, for any $\beta^\bullet$, there is a natural restriction map $\Lambda^Q\to R(\cG_{\beta^\bullet})$, denoted $f\mapsto f[V]$ for variables $V$ as above. A product of Schur functions $s_{\xi^\bullet}[X^\bullet]$ is sent to the product $s_{\xi^\bullet}[V]=\prod_{i\in Q_0}s_{\xi^{(i)}}(v^{(i)}_1,\dotsc,v^{(i)}_{\beta^{(i)}})$ of corresponding Schur polynomials in the variables of $V$. 

Recall from Theorem~\ref{T:quiver HL functions and Kostka} that $\bY^{Q_0}_{\nu^\bullet}$ denotes the set of $Q_0$-tuples of partitions $\lad$ such that $\la^{(i)}$ has at most $\nu^{(i)}$ rows for $i\in Q_0$.

\begin{prop}\label{P:quiver-current-shuffle}
For any $\xi^\bullet\in \bY^{Q_0}_{\beta^\bullet}$ and $\lambda^\bullet\in\bY^{Q_0}_{\alpha^\bullet+\beta^\bullet}$, the coefficient of $s_\lad[X^\bullet]$ in the Schur expansion of
\begin{align}
H^{(i,a)}_\mu \cdot s_{\xi^\bullet}[X^\bullet]
\end{align}
is equal to the coefficient of $s_\lad[U+V]$ in the Schur expansion of the shuffle product $s_\mu[U]\star s_{\xi^\bullet}[V]$. (The former coefficient is zero unless $\lambda^\bullet\in\bY^{Q_0}_{\alpha^\bullet+\beta^\bullet}$.)
\end{prop}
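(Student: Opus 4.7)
My plan is to adapt the method used in the proof of Theorem~\ref{T:quiver HL functions and Kostka}, now treating the initial vector $s_{\xi^\bullet}[X^\bullet]$ (instead of the vacuum $1$) by representing it as a coefficient extraction against auxiliary Cauchy kernels. For each $j\in Q_0$ I introduce an auxiliary alphabet $v^{(j)}=(v^{(j)}_1,\dotsc,v^{(j)}_{\beta^{(j)}})$ and write $V=\bigsqcup_j v^{(j)}$, so that
\[
  s_{\xi^\bullet}[X^\bullet] = [s_{\xi^\bullet}[V]]\prod_{j\in Q_0}\Omega[v^{(j)}X^{(j)}]
\]
by the Cauchy identity. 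I then apply $H^{(i,a)}(U)$ and push the three groups of skewing operators appearing in \eqref{E:H-current} to the right of all Cauchy kernels using the commutation law $\Omega[ZX^{(i)}]^\perp\Omega[X^{(j)}Y]=\Omega[ZY]^{\delta_{ij}}\Omega[X^{(j)}Y]\Omega[ZX^{(i)}]^\perp$ from the proof of Theorem~\ref{T:quiver HL functions and Kostka}. Once the skewings reach the right end they act on $1$ and vanish, yielding
\begin{align*}
  H^{(i,a)}(U)\cdot s_{\xi^\bullet}[X^\bullet] &= [s_{\xi^\bullet}[V]]\,R(U)\,\Omega[UX^{(i)}]\prod_{j\in Q_0}\Omega[v^{(j)}X^{(j)}]\\
  &\qquad\times\Omega[-U^*v^{(i)}]\prod_{b\in\Out(i)}\Omega[t_bU^*v^{(hb)}].
\end{align*}

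Next I extract Schur coefficients. Using $\Omega[UX^{(i)}]\Omega[v^{(i)}X^{(i)}]=\Omega[(U+v^{(i)})X^{(i)}]=\sum_{\lambda^{(i)}}s_{\lambda^{(i)}}[U+v^{(i)}]\,s_{\lambda^{(i)}}[X^{(i)}]$ at vertex $i$ and the standard Cauchy expansion $\Omega[v^{(j)}X^{(j)}]=\sum_{\lambda^{(j)}}s_{\lambda^{(j)}}[v^{(j)}]\,s_{\lambda^{(j)}}[X^{(j)}]$ for $j\ne i$, then taking $[u^\mu]$ to isolate $H^{(i,a)}_\mu$, I obtain
\begin{align*}
  [s_{\lambda^\bullet}]H^{(i,a)}_\mu\cdot s_{\xi^\bullet}[X^\bullet] &= [u^\mu][s_{\xi^\bullet}[V]]\,R(U)\,s_{\lambda^{(i)}}[U+v^{(i)}]\prod_{j\ne i}s_{\lambda^{(j)}}[v^{(j)}]\\
  &\qquad\times\Omega[-U^*v^{(i)}]\prod_{b\in\Out(i)}\Omega[t_bU^*v^{(hb)}].
\end{align*}

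To identify this with $[s_{\lambda^\bullet}[U+V]](s_\mu[U]\star s_{\xi^\bullet}[V])$ I rewrite the right side via the Weyl character formula: $s_{\lambda^{(i)}}[U+v^{(i)}]=J^{S_{a+\beta^{(i)}}}(w^{\lambda^{(i)}+\rho})/\Delta(U+v^{(i)})$ in the composite alphabet $W^{(i)}:=U\sqcup v^{(i)}$, combined with $\Delta(U+v^{(i)})=\Delta(U)\Delta(v^{(i)})\prod_{k,\ell}(u_k-v^{(i)}_\ell)$. The combination $R(U)\Omega[-U^*v^{(i)}]$ is, up to a fixed monomial in $U$, precisely $\Delta(U)\prod_{k,\ell}(u_k-v^{(i)}_\ell)$; paired with the factor $R(v^{(i)})$ produced by the Jacobi--Trudi form of $s_{\xi^{(i)}}[v^{(i)}]$ implicit in the $[s_{\xi^\bullet}[V]]$ extraction, this realizes the Demazure symmetrizer $D_{w_0^{(i)}}$ at vertex $i$ as a signed sum over coset representatives of $S_{a+\beta^{(i)}}/(S_a\times S_{\beta^{(i)}})$. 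At vertices $j\ne i$ the $v^{(j)}$-factors already appear symmetrically and match the corresponding parts of the shuffle product tautologically.

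The main obstacle is reconciling the arrow kernels: the current side produces the ``$v/u$'' factor $\Omega[t_bU^*v^{(hb)}]=\prod_{k,\ell}(1-t_bv^{(hb)}_\ell/u_k)^{-1}$, whereas the shuffle $\star$ uses the ``$u/v$'' factor $\Omega[t_bU(v^{(hb)})^*]=\prod_{k,\ell}(1-t_bu_k/v^{(hb)}_\ell)^{-1}$. These are distinct as formal series, but under the polynomial-Schur coefficient extraction imposed by $\lambda^\bullet\in\bY^{Q_0}_{\alpha^\bullet+\beta^\bullet}$ (and the corresponding restriction on $\xi^\bullet$) they deliver the same answer. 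This rests on the reciprocal-alphabet Schur identity $s_\eta[U^*]=(u_1\dotsm u_a)^{-\eta_1}s_{\eta^\vee}[U]$ together with its analogue for $V^*$, so that the ``orientation'' of each arrow kernel only affects the location of a monomial prefactor. Tracking the cancellation of these prefactors against the $[u^\mu]$ and $[s_{\xi^\bullet}[V]]$ extractions is the technical heart of the matching; once it is carried out the two Schur coefficients agree, proving the proposition.
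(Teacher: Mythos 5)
Your setup coincides with the paper's own proof: representing $s_{\xi^\bullet}[X^\bullet]$ by coefficient extraction against the Cauchy kernels $\prod_{j}\Omega[v^{(j)}X^{(j)}]$, commuting the skewing operators of $H^{(i,a)}(U)$ to the right, and assembling $R(U)\,R(v^{(i)})\,\Omega[-U^*v^{(i)}]$ into the Vandermonde-type product $R(Z)$ for the combined alphabet $Z=U\sqcup V$ are exactly the paper's steps. The gap is at the point you yourself call ``the technical heart'': reconciling the kernel $\prod_{b\in\Out(i)}\Omega[t_bU^*v^{(hb)}]$ produced by the current with the kernel $\prod_{b\in\Out(i)}\Omega[t_bU(v^{(hb)})^*]$ appearing in $\star$. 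You assert this follows from $s_\eta[U^*]=(u_1\dotsm u_a)^{-\eta_1}s_{\eta^\vee}[U]$ and ``its analogue for $V^*$,'' but this cannot work termwise: Cauchy-expanding, the first kernel gives $\sum_\eta t_b^{|\eta|}s_\eta[U^*]s_\eta[v^{(hb)}]$ and the second $\sum_\eta t_b^{|\eta|}s_\eta[U]s_\eta[(v^{(hb)})^*]$; the reciprocal identity complements $\eta$ inside an $a\times\eta_1$ rectangle in one case and a $\beta^{(hb)}\times\eta_1$ rectangle in the other, and the monomial prefactors live in different alphabets ($U$ versus $v^{(hb)}$, with $hb$ generally not equal to $i$, so the prefactor shifts the $\xi^{(hb)}$-extraction rather than ``relocating'' anything). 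The two kernels are genuinely different Laurent expansions, and no term-by-term matching is available.

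The identity you need is global, not termwise, and it is what the paper's final chain of equalities supplies: after writing the answer as $\sum_{\wbul}(-1)^{\wbul}\,[z^{\mu+\xi^\bullet+\rho^\bullet-\wbul(\lad+\rhod)}]\prod_b\Omega[t_bU^*v^{(hb)}]$, one reindexes $\wbul\mapsto(\wbul)^{-1}$ (transferring the Weyl group element onto the kernel) and then inverts all variables $z\mapsto z^{-1}$. This inversion negates the exponent being extracted and simultaneously converts $\Omega[t_bU^*v^{(hb)}]$ into $\Omega[t_bU(v^{(hb)})^*]$, after which the alternating sum reassembles into $[z^{\lad+\rhod}]J^\bullet\bigl(z^{\mu+\xi^\bullet+\rho^\bullet}\prod_b\Omega[t_bU(v^{(hb)})^*]\bigr)$, i.e., the coefficient of $s_\lad[U+V]$ in $s_\mu[U]\star s_{\xi^\bullet}[V]$. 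Without this (or an equivalent) argument, your proof is incomplete at its decisive step.
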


\begin{proof}
We follow the proof of Theorem~\ref{T:quiver HL functions and Kostka}, except that we consider the action of $H^{(i,a)}(U)$ on the generating function
\begin{align*}
\prod_{j\in Q_0}\Omega[v^{(j)}X^{(j)}] = \sum_{\xi^\bullet\in\bY^{Q_0}_{\beta^\bullet}} s_{\xi^\bullet}[V]s_{\xi^\bullet}[X^\bullet]. 
\end{align*}
This is given by
\begin{align*}
&H^{(i,a)}(U) \cdot \prod_{j\in Q_0}\Omega[v^{(j)}X^{(j)}]\\
&= R(U)\Omega[-U^*v^{(i)}] \Omega[UX^{(i)}]\prod_{j\in Q_0}\Omega[v^{(j)}X^{(j)}]\prod_{b\in \Out(i)} \Omega[t_{b} U^* v^{(hb)}].
\end{align*}
Let $Z$ be the ordered union of the variables $U$ and $V$, with $U$ before $V$. Write $\mu+\xi^\bullet$ for the concatenated weight with respect this ordering, so that $z^{\mu+\xi^\bullet}=u^\mu v^{\xi^\bullet}$. Define $\rho^\bullet$ and $S^\bullet$ with respect to $Z$. Then
\begin{align*}
&\pair{H^{(i,a)}_{\mu}\cdot s_{\xi^\bullet}[X^\bullet]}{s_\lad[X^\bullet]}\\
&=[u^{\mu}s_{\xi^\bullet}[V]]\pair{H^{(i,a)}(U)\cdot\prod_{j\in Q_0}\Omega[v^{(j)}X^{(j)}]}{s_\lad[X^\bullet]} \\
&= [u^{\mu}s_{\xi^\bullet}[V]] R(U)\Omega[-U^*v^{(i)}]
	s_\lad[U+V] \prod_{b\in\Out(i)} \Omega[t_{b} U^* v^{(hb)}]\\
&= [u^{\mu}v^{\xi^\bullet}] R(U)R(V)\Omega[-U^*v^{(i)}]
	s_\lad[U+V] \prod_{b\in\Out(i)} \Omega[t_{b} U^* v^{(hb)}]\\
&= [z^{\mu+\xi^\bullet}] R(Z)s_\lad[Z] \prod_{b\in\Out(i)} \Omega[t_{b} U^* v^{(hb)}]\\
&= \sum_{\wbul\in S^\bullet} (-1)^{\wbul} [ z^{\mu+\xi^\bullet+\rho^\bullet-\wbul(\lad+\rhod)}] \prod_{b\in\Out(i)} \Omega[t_{b} U^* v^{(hb)}]\\
&= \sum_{\wbul\in S^\bullet} (-1)^{\wbul} [ z^{(w^{\bullet})^{-1}(\mu+\xi^\bullet+\rho^\bullet)-(\lad+\rhod)}] \prod_{b\in\Out(i)} (\wbul)^{-1}\left(\Omega[t_{b} U^* v^{(hb)}]\right)\\
&= \sum_{\wbul\in S^\bullet} (-1)^{\wbul} [ z^{\lad+\rhod-w^{\bullet}(\mu+\xi^\bullet+\rho^\bullet)}] \prod_{b\in\Out(i)} \wbul\left(\Omega[t_{b} U (v^{(hb)})^*]\right)\\
&= [z^{\lad+\rhod}]J^\bullet\left(z^{\mu+\xi^\bullet+\rho^\bullet}\prod_{b\in\Out(i)} \Omega[t_{b} U (v^{(hb)})^*]\right)
\end{align*}
which is equal to the coefficient of $s_\lad[U+V]$ in $s_\mu[U]\star s_{\xi^\bullet}[V]$.
\end{proof}

\begin{rem}\label{R:shuffle-act}
Consider the $\Z[t_{Q_1}^{\pm 1}]$-subalgebra $\hcS'_Q\subset \hcS_Q$ generated under $\star$ by all $R(G_{\alpha^\bullet})$ such that $\alpha^\bullet$ is supported at a single vertex. Using Proposition~\ref{P:quiver-current-shuffle} and the associativity of $\star$, one can show that the assignment $s_\mu[U]\mapsto H^{(i,a)}_\mu$ (with the former interpreted as in Proposition~\ref{P:quiver-current-shuffle}) extends to an action of $\hcS'_Q$ on $\Lambda^Q$.
\end{rem}

\comment{
\subsection{Lusztig's convolution}

\subsubsection{Definition}

Given $\bi',\ba',\mu'(\bullet)$ and $\bi'',\ba'',\mu''(\bullet)$, let $\bi=\bi'\bi''$, $\ba=\ba'\ba''$, $\mu(\bullet)=\mu'(\bullet)\mu''(\bullet)$ be their concatenations.

Let $\alpha^\bullet = \nu^\bullet(\bi',\ba')$, $\beta^\bullet=\nu^\bullet(\bi'',\ba'')$.

Let $\bV'=\bV_{\nu^\bullet(\bi',\ba')}$, $\bV''=\bV_{\nu^\bullet(\bi'',\ba'')}$, and $\bV=\bV_{\nu^\bullet(\bi,\ba)}$. Let $E'=E_{\nu^\bullet(\bi',\ba')}$, $E''=E_{\nu^\bullet(\bi'',\ba'')}$, and $E=E_{\nu^\bullet(\bi,\ba)}$. Define the groups $G',\cG'$, $G'',\cG''$, and $G,\cG$ accordingly using these data.

Following Lusztig \cite{L:conv}, we have a convolution product
\begin{align}
K^{\cG_{\alpha^\bullet}}(E_{\alpha^\bullet})\times K^{\cG_{\beta^\bullet}}(E_{\beta^\bullet}) \to K(E_{\alpha^\bullet+\beta^\bullet})
\end{align}
defined as follows (note that lack of equivariance on the right). Consider the diagram
\begin{align}
E_{\alpha^\bullet}\times E_{\beta^\bullet} \xleftarrow{p_1} Y' \xrightarrow{p_2} Y'' \xrightarrow{p_3} E_{\alpha^\bullet+\beta^\bullet}
\end{align}
consisting of the following:
\begin{itemize}

\item $Y'$ is the variety of quadruples $(x,\bW,R',R'')$ where $x\in E_{\alpha^\bullet+\beta^\bullet}$, $\bW$ is a $Q_0$-graded $x$-stable subspace of $\bV_{\alpha^\bullet+\beta^\bullet}$ of dimension $\alpha^\bullet$, and $R': \bV_{\alpha^\bullet}\to\bW$ and $R'':\bV_{\beta^\bullet} \to \bV_{\alpha^\bullet+\beta^\bullet}/\bW$ are isomorphisms.

\item $Y''$ is the variety of pairs $(x,\bW)$ such that $x\in E_{\alpha^\bullet+\beta^\bullet}$ and $\bW$ is a $Q_0$-graded $x$-stable subspace of $\bV_{\alpha^\bullet+\beta^\bullet}$ of dimension $\alpha^\bullet$.

\item $p_1$ is the projection $(x,\bW,R',R'')\mapsto (x|_{\bV_{\alpha^\bullet}},x|_{\bV_{\beta^\bullet}})$, where $x|_{\bV_{\alpha^\bullet}}=(R')^{-1}\circ x|_{\bW}\circ R'$ and $x|_{\bV_{\beta^\bullet}}=(R'')^{-1}\circ x|_{\bV_{\alpha^\bullet+\beta^\bullet}/\bW}\circ R''$. This is a smooth morphism and hence is flat, so the pullback $p_1^*$ is exact.

\item $p_2$ is the projection $(x,\bW,R',R'')\mapsto (x,\bW)$. This is a principal $(G_{\alpha^\bullet}\times G_{\beta^\bullet})$-bundle.

\item $p_3$ is the projection $(x,\bW)\mapsto x$. This map is proper because its fibers are projective.

\end{itemize}

Let $\cF'\in \mathrm{Coh}^{\cG'}(E')$ and $\cF''\in \mathrm{Coh}^{\cG''}(E'')$ be given. We can form $p_1^*(\cF'\boxtimes\cF'')$, a $G'\times G''$-equivariant sheaf on $Y'$. 

Because $p_2 : Y' \to Y''$ is a principal $G'\times G''$-bundle, there exists by ``faithfully flat descent'' a coherent sheaf $\overline{\cF}$ on $Y''$ such that $p_1^*(\cF'\boxtimes\cF'')=p_2^*(\overline{\cF})$.

With this notation, then the convolution of $\cF'$ and $\cF''$ is defined as
\begin{align}
\cF' * \cF'' = (p_3)_* [\overline{\cF}]
\end{align}

Define the natural projection $q_{\bi,\ba} : Z_{\bi,\ba}\to E$. \fixit{somewhere earlier}

Now we consider the subspace $K'$ of $K^{\cG'}(E')$ spanned by classes of the form
\begin{align}
K' = \mathrm{span}\{(q_{\bi',\ba'})_*[\cW^{\mu'(\bullet)}] \}
\end{align}
and similarly define $K''$ and $K$. We will show that $* : K'\times K'' \to K$. This is ultimately a consequence of the following:
\begin{align}
p_1^*((q_{\bi',\ba'})_*[\cW^{\mu'(\bullet)}]\boxtimes (q_{\bi'',\ba''})_*[\cW^{\mu''(\bullet)}]) = p_2^*((s_{\bi,\ba})_*[\cW^{\mu(\bullet)}])
\end{align}
where $s_{\bi,\ba}:Z_{\bi,\ba}\to Y''$ is a proper map which takes the right step in the big flag. The equality follows from base change. \fixit{Make precise and explain.} Hence we obtain
\begin{align}
(q_{\bi',\ba'})_*[\cW^{\mu'(\bullet)}] * (q_{\bi'',\ba''})_*[\cW^{\mu''(\bullet)}] &= (p_3)_*(s_{\bi,\ba})_*[\cW^{\mu(\bullet)}]\\
&= (q_{\bi,\ba})_*[\cW^{\mu(\bullet)}].\label{E:special-conv}
\end{align}
This establishes that $*$ maps $K'\times K''$ to $K$.
}

\subsection{$K$-theoretic Hall algebra}\label{S:K-HA}

Kontsevich and Soibelman \cite{KS} defined cohomological Hall algebras, which give a ring structure to the space $\oplus_{\nu^\bullet\in\Z_{\ge 0}^{Q_0}}H^*_{G_{\nu^\bullet}}(E_{\nu^\bullet})$. We adapt their definition to the setting of equivariant $K$-theory to define a product on $\oplus_{\nu^\bullet\in\Z_{\ge 0}^{Q_0}}K^{\cG_{\nu^\bullet}}(E_{\nu^\bullet})$. We call this the $K$-theoretic Hall algebra. 

\subsubsection{Definition} Fix dimension vectors $\alpha^\bullet,\beta^\bullet\in\Z_{\ge 0}^{Q_0}$. The product 
\begin{align*}
\circledast : K^{\cG_{\al^\bullet}}(E_{\alpha^\bullet}) \times K^{\cG_{\beta^\bullet}}(E_{\beta^\bullet}) \to K^{\cG_{\al^\bullet+\beta^\bullet}}(E_{\alpha^\bullet+\beta^\bullet}) 
\end{align*}
is defined as the composition
\begin{align*}
\cF_1 \circledast \cF_2 &= (q_*\boxtimes\mathrm{id})\circ \mathrm{ind}\circ i_*\circ \pi^*\circ \mathrm{for}(\cF_1\boxtimes\cF_2)
\end{align*}
of the following maps:
\begin{enumerate}
\item the outer tensor product identification:
\begin{align*}
K^{\cG_{\al^\bullet}}(E_{\alpha^\bullet}) \times K^{\cG_{\beta^\bullet}}(E_{\beta^\bullet}) \cong K^{\cG_{\al^\bullet}\times \cG_{\beta^\bullet}}(E_{\alpha^\bullet}\times E_{\beta^\bullet}).
\end{align*}

\item forgetting to the diagonal subgroup $T^{Q_1}=\Delta T^{Q_1}\subset T^{Q_1}\times T^{Q_1}$:
\begin{align*}
K^{\cG_{\al^\bullet}\times \cG_{\beta^\bullet}}(E_{\alpha^\bullet}\times E_{\beta^\bullet})\overset{\mathrm{for}}{\longrightarrow} K^{G_{\al^\bullet}\times G_{\beta^\bullet}\times T^{Q_1}}(E_{\alpha^\bullet}\times E_{\beta^\bullet}).
\end{align*}

\item pullback and pushforward along the natural maps
\begin{align*}
E_{\alpha^\bullet}\times E_{\beta^\bullet} \overset{\pi}{\longleftarrow} E_{\alpha^\bullet,\beta^\bullet} \overset{i}{\longrightarrow} E_{\alpha^\bullet+\beta^\bullet} 
\end{align*}
where $E_{\alpha^\bullet,\beta^\bullet}$ is the space of quiver representations on the space $\bV_{\alpha^\bullet}\oplus\bV_{\beta^\bullet}$ leaving $\bV_{\beta^\bullet}$ stable; here $E_{\alpha^\bullet+\beta^\bullet}$ consists of quiver representations on the space $\bV_{\alpha^\bullet}\oplus\bV_{\beta^\bullet}$, $i$ is the natural inclusion map from {\em lower} block triangular matrices, and $\pi$ is the projection to the diagonal blocks of such matrices.

\item the induction map $$K^{\cG_{\alpha^\bullet,\beta^\bullet}}(E_{\alpha^\bullet+\beta^\bullet})\overset{\mathrm{ind}}{\longrightarrow} K^{\cG_{\alpha^\bullet+\beta^\bullet}}(\cG_{\alpha^\bullet+\beta^\bullet}\times^{\cG_{\alpha^\bullet,\beta^\bullet}}E_{\alpha^\bullet+\beta^\bullet})$$
where $\cG_{\alpha^\bullet,\beta^\bullet}=G_{\al^\bullet}\times G_{\beta^\bullet}\times T^{Q_1}$.

\item the pushforward $q_*\boxtimes\mathrm{id}$ arising from the isomorphism
\begin{align*}
\cG_{\alpha^\bullet+\beta^\bullet}\times^{\cG_{\alpha^\bullet,\beta^\bullet}}E_{\alpha^\bullet+\beta^\bullet} \cong (\cG_{\alpha^\bullet+\beta^\bullet}/\cG_{\alpha^\bullet,\beta^\bullet})\times E_{\alpha^\bullet+\beta^\bullet}
\end{align*}
where $q: \cG_{\alpha^\bullet+\beta^\bullet}/\cG_{\alpha^\bullet,\beta^\bullet}\to \mathrm{pt}$.
(The isomorphism results from the fact that $E_{\alpha^\bullet+\beta^\bullet}$ carries a $\cG_{\alpha^\bullet+\beta^\bullet}$-action extending that of $\cG_{\alpha^\bullet,\beta^\bullet}$.)
\end{enumerate}

One may check using this definition that $\circledast$ is associative as in \cite[\S2.3]{KS}. Alternatively, we will compute $\circledast$ more explicitly in the next subsection and see that it is given by the shuffle product $*$.

\subsubsection{Explicit formula}
The explicit computation of $\circledast$ proceeds as follows. By the Thom isomorphism (e.g., \cite[Theorem 5.4.7]{CG}), we have
\begin{align}
K^{\cG_{\nu^\bullet}}(E_{\nu^\bullet}) \cong K^{\cG_{\nu^\bullet}}(\mathrm{pt}) \cong R(\cG_{\nu^\bullet})
\end{align}
for any dimension vector $\nu^\bullet$, via the pullback $\pi^*:K^{\cG_{\nu^\bullet}}(\mathrm{pt})\to \K^{\cG_{\nu^\bullet}}(E_{\nu^\bullet})$ where $\pi: E_{\nu^\bullet}\to \mathrm{pt}$. The inverse is given by $\pi_*([\cO_{E_{\nu^\bullet}}])^{-1}\pi_*$ due to the projection formula (\cite[(5.3.13)]{CG} or \cite[Exercise III.8.3]{Ha}):
\begin{align}
R^i\pi_*(\pi^*(\cF)\otimes \cO_{E_{\nu^\bullet}})\cong \cF\otimes R^i\pi_*\cO_{E_{\nu^\bullet}}
\end{align}
for any $\cG_{\nu^\bullet}$-equivariant locally free sheaf $\cF$ on $E_{\nu^\bullet}$. We note that $R^i\pi_*=0$ for $i>0$ and we have a natural identification $\pi_*\cO_{E_{\nu^\bullet}} \cong \Sym(E^\vee_{\nu^\bullet})$. Putting all this together, we obtain an isomorphism
\begin{align}
\psi: K^{\cG_{\nu^\bullet}}(E_{\nu^\bullet})&\to K^{\cG_{\nu^\bullet}}(\mathrm{pt}) \cong R(\cG_{\nu^\bullet})\\
[\cF]&\mapsto \frac{\chi_{\cG_{\nu^\bullet}}(E_{\nu^\bullet},[\cF])}{\gch_{\cG_{\nu^\bullet}}(\Sym(E^\vee_{\nu^\bullet}))}.\notag
\end{align}
More generally, for a vector space $V$ with an action of a linear algebraic group $G$, we have an isomorphism $\psi:K^G(V)\to R(G)$ given by $\chi_G(V,-)/\gch_G(\Sym(V^\vee))$. We abuse notation and use the symbol $\psi$ repeatedly below, with $G$ and $V$ determined by context.

\begin{prop}
For any dimension vectors $\alpha^\bullet,\beta^\bullet$, the following diagram commutes:
\begin{center}
\begin{tikzcd}
K^{\cG_{\alpha^\bullet}}(E_{\alpha^\bullet}) \arrow{d}[swap]{\psi\times\psi} \times K^{\cG_{\beta^\bullet}}(E_{\beta^\bullet}) \arrow[r,"\circledast"] & K^{\cG_{\alpha^\bullet+\beta^\bullet}}(E_{\alpha^\bullet+\beta^\bullet}) \arrow[d,"\psi"]\\
R(\cG_{\alpha^\bullet})\times R(\cG_{\beta^\bullet}) \arrow[r,"*"] & R(\cG_{\alpha^\bullet+\beta^\bullet})
\end{tikzcd}
\end{center}
where $*$ is the shuffle product \eqref{E:shuffle-ast}. 
\end{prop}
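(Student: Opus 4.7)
The plan is to compute $\psi(\cF_1\circledast\cF_2)$ directly from the definition of $\circledast$, propagating $\psi$ through each of its four constituent operations: the outer tensor with forgetful, the pullback $\pi^*$ along $\pi: E_{\alpha^\bullet,\beta^\bullet}\to E_{\alpha^\bullet}\times E_{\beta^\bullet}$, the pushforward $i_*$ along $i: E_{\alpha^\bullet,\beta^\bullet}\hookrightarrow E_{\alpha^\bullet+\beta^\bullet}$, and the composition $(q_*\boxtimes\mathrm{id})\circ\mathrm{ind}$. Steps one and two are routine: the outer tensor with forgetful sends $[\cF_1]\boxtimes[\cF_2]$ to $\psi(\cF_1)\psi(\cF_2)\in R(\cG_{\alpha^\bullet,\beta^\bullet})$, and the projection formula shows that $\pi^*$ leaves $\psi$ unchanged, since the factor $\gch\Sym(N^\vee)$ produced in the numerator (with $N$ the fiber of $\pi$) is exactly canceled by the corresponding factor in the denominator $\gch\Sym(E_{\alpha^\bullet,\beta^\bullet}^\vee)$.

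The closed immersion $i$ in the third step preserves Euler characteristics, so only the $\psi$-denominator changes, from $\gch\Sym(E_{\alpha^\bullet,\beta^\bullet}^\vee)$ to $\gch\Sym(E_{\alpha^\bullet+\beta^\bullet}^\vee)$. Their ratio is the inverse of the $K$-theoretic Euler class of the normal bundle $N'=E_{\alpha^\bullet+\beta^\bullet}/E_{\alpha^\bullet,\beta^\bullet}$, and reading off the $\mathcal{T}$-weights of $N'$ yields exactly the factor $\prod_{b\in Q_1,k,\ell}(1-t_b v_k^{(tb)}/u_\ell^{(hb)})$ appearing inside $D_{w_0^\bullet}$ in \eqref{E:shuffle-ast}. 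At this point $\psi$ of the class is $\psi(\cF_1)\psi(\cF_2)\prod_{b,k,\ell}(1-t_b v_k^{(tb)}/u_\ell^{(hb)})$, viewed as an element of $R(\cG_{\alpha^\bullet,\beta^\bullet})$.

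The main step is the fourth, where the Demazure symmetrizer $D_{w_0^\bullet}$ appears. The key observation is that the sheaf $i_*\pi^*(\cF_1\boxtimes\cF_2)$, although only $\cG_{\alpha^\bullet,\beta^\bullet}$-equivariant \textit{a priori}, extends naturally to a $P^-\times T^{Q_1}$-equivariant sheaf on $E_{\alpha^\bullet+\beta^\bullet}$, where $P^-\subset G_{\alpha^\bullet+\beta^\bullet}$ is the parabolic stabilizing $\bV_{\beta^\bullet}$: indeed $E_{\alpha^\bullet,\beta^\bullet}$ is $P^-$-stable, and conjugation by the unipotent radical $U^-$ fixes the diagonal blocks of elements of $E_{\alpha^\bullet,\beta^\bullet}$, so $\pi$-pullbacks carry a tautological $U^-$-equivariance. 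This $P^-$-equivariance allows one to replace the pushforward along the non-projective quotient $G_{\alpha^\bullet+\beta^\bullet}/(G_{\alpha^\bullet}\times G_{\beta^\bullet})$ by the pushforward along the projective partial flag variety $G_{\alpha^\bullet+\beta^\bullet}/P^-$, and Borel--Weil--Bott then identifies this pushforward with $D_{w_0^\bullet}$, exactly as in the derivation of \eqref{E:chi-D}. Combining all four steps gives the right-hand side of \eqref{E:shuffle-ast}, so the diagram commutes. The main technical obstacle is precisely step four: the careful justification of replacing the $G/L$-pushforward with the $G/P^-$-pushforward by means of the extended equivariance, and matching the resulting Borel--Weil--Bott computation with the Demazure operator used in the shuffle formula.
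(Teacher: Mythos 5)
Your proposal is correct and follows essentially the same route as the paper: the paper's proof likewise tracks $\psi$ through each stage of $\circledast$ via two commutative diagrams, obtaining the identity for $\pi^*$, the Koszul factor $\prod_{b,k,\ell}(1-t_b v^{(tb)}_k/u^{(hb)}_\ell)$ for $i_*$, and $D_{w_0^\bullet}$ for the induction-plus-pushforward step via Borel--Weil--Bott. Your explicit extension of equivariance to the parabolic $P^-$ in the final step is a careful elaboration of a point the paper treats tersely (it simply notes that $\cG_{\alpha^\bullet+\beta^\bullet}/\cG_{\alpha^\bullet,\beta^\bullet}$ is a product of partial flag varieties), but it does not change the argument.
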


\begin{proof}
We have the following commutative diagram:
\begin{center}
\begin{tikzcd}
K^{\cG_{\alpha^\bullet,\beta^\bullet}}(E_{\alpha^\bullet}\times E_{\beta^\bullet}) \arrow[d,"\psi"]\arrow[r,"\pi^*"] & K^{\cG_{\alpha^\bullet,\beta^\bullet}}(E_{\alpha^\bullet,\beta^\bullet}) \arrow[d,"\psi"]\arrow[r,"i_*"] & K^{\cG_{\alpha^\bullet,\beta^\bullet}}(E_{\alpha^\bullet+\beta^\bullet}) \arrow[d,"\psi"]\\
R(\cG_{\alpha^\bullet,\beta^\bullet}) \arrow[r,"\mathrm{id}"] & R(\cG_{\alpha^\bullet,\beta^\bullet}) \arrow[r,"\overline{i}_*"] & R(\cG_{\alpha^\bullet,\beta^\bullet})
\end{tikzcd}
\end{center}
where we compute using the Koszul complex \cite[\S5.4]{CG} that:
\begin{align*}
\overline{i}_*(f(u,v)) &= f(u,v)\prod_{\substack{b\in Q_1\\k,\ell}}(1-t_{b}v^{(tb)}_k/u^{(hb)}_\ell).
\end{align*}
As above, $1\le k\le \beta^{(tb)}$ and $1\le \ell\le \alpha^{(hb)}$ in the product. Note that this product is nothing but $1/\gch_{\cG_{\alpha^\bullet,\beta^\bullet}}\Sym(E_{\alpha^\bullet+\beta^\bullet}/E_{\alpha^\bullet,\beta^\bullet})^\vee$.

We have another commutative diagram involving the induction from $\cG_{\alpha^\bullet,\beta^\bullet}$ to $\cG_{\alpha^\bullet+\beta^\bullet}$:
\begin{center}
\begin{tikzcd}
K^{\cG_{\alpha^\bullet,\beta^\bullet}}(E_{\alpha^\bullet+\beta^\bullet}) \arrow{d}[swap]{\psi}\arrow[r,"\mathrm{ind}"] & K^{\cG_{\alpha^\bullet+\beta^\bullet}}(\cG_{\alpha^\bullet+\beta^\bullet}\times^{\cG_{\alpha^\bullet,\beta^\bullet}}E_{\alpha^\bullet+\beta^\bullet}) \arrow[r,"q_*\boxtimes \mathrm{id}"] & K^{\cG_{\alpha^\bullet+\beta^\bullet}}(E_{\alpha^\bullet+\beta^\bullet}) \arrow[d,"\psi"]\\
R(\cG_{\alpha^\bullet,\beta^\bullet}) \arrow[rr,"D_{w_0^\bullet}"] & & R(\cG_{\alpha^\bullet+\beta^\bullet})
\end{tikzcd}
\end{center}
Note that $\cG_{\alpha^\bullet+\beta^\bullet}/\cG_{\alpha^\bullet,\beta^\bullet}$ is product of partial flag varieties. This allows one to compute the map along the bottom arrow using the Borel-Weil-Bott theorem. The relevant computation is that
\begin{align*}
\chi(\cG_{\alpha^\bullet+\beta^\bullet}\times^{\cG_{\alpha^\bullet,\beta^\bullet}}E_{\alpha^\bullet+\beta^\bullet}, \cG_{\alpha^\bullet+\beta^\bullet}\times^{\cG_{\alpha^\bullet,\beta^\bullet}} V) &= (q\times\pi)_*[\cG_{\alpha^\bullet+\beta^\bullet}\times^{\cG_{\alpha^\bullet,\beta^\bullet}} V] \\
&= (q\times 1)_*(1\times\pi)_*[\cG_{\alpha^\bullet+\beta^\bullet}\times^{\cG_{\alpha^\bullet,\beta^\bullet}} V]\\
&= D_{w_0^\bullet}(\chi_{\cG_{\alpha^\bullet,\beta^\bullet}} V)
\end{align*}
for any $\cG_{\alpha^\bullet,\beta^\bullet}$-equivariant vector bundle $V$ on $E_{\alpha^\bullet+\beta^\bullet}$.
\end{proof}


\begin{rem}\label{R:lusztig-conv}
Lusztig \cite{L:conv} gives a closely related construction of a product in his geometric realization of the negative part of quantum enveloping algebras. In fact, even though Lusztig's construction uses perverse sheaves, one can apply it equally well to equivariant $K$-theory (of coherent sheaves). This direction is pursued in unpublished work of Grojnowski \cite{G}. It is not difficult to check that Lusztig's product in the $K$-theoretic setting agrees with the product $\circledast$ defined above.
\end{rem}

\subsubsection{Pushforward classes}
Fix data $(\bi,\ba,\mu(\bullet))$ and recall the second projection $\Spr_{\bi,\ba} : Z_{\bi,\ba}\to E_{\nu^\bullet(\bi,\ba)}$.
Consider the class $(\Spr_{\bi,\ba})_*[\cW^{\mu(\bullet)}]$. Its image in $R(\cG_{\nu^\bullet(\bi,\ba)})$ is given by
\begin{align}
\psi^{\bi,\ba}_{\mu(\bullet)} &= \psi((\Spr_{\bi,\ba})_*[\cW^{\mu(\bullet)}])\notag\\
&= (\gch_{\cG_{\nu^\bullet(\bi,\ba)}} \Sym(E^\vee_{\nu^\bullet(\bi,\ba)}))^{-1}\chi^{\bi,\ba}_{\mu(\bullet)}\\
&= D_{w_0^\bullet}\left(x^{\mu(\bullet)}(\gch_{\cG_{\nu^\bullet(\bi,\ba)}} \Sym(E^\vee_{\nu^\bullet(\bi,\ba)}))^{-1}B_{\bi,\ba}\right).\notag
\end{align}
Note that all of the denominators of $B_{\bi,\ba}$ are canceled in this expression, which results in $\psi^{\bi,\ba}_{\mu(\bullet)}\in R(\cG)$.

\begin{rem}
The elements $\psi^{\bi,\ba}_{\mu(\bullet)}\in R(\cG)$ are a generalization of Hall-Littlewood $R$-polynomials. In particular, for $Q$ equal to the Jordan quiver, $\ba=(1,1,\dotsc,1)$, and $\mu(\bullet)=(\mu(1),\mu(2),\dotsc,\mu(m))=:\mu$, one has that $\psi^{\bi,\ba}_{\mu(\bullet)}$ is equal to
\begin{align*}
R_\mu(u_1,\dotsc,u_m;t) = \sum_{w \in S_m} w\left(u_1^{\mu_1}\dotsm u_m^{\mu_m}\prod_{i<j}\frac{u_i-tu_j}{u_i-u_j}\right).
\end{align*}
\end{rem}

\begin{rem}
For $Q$ equal to the cyclic quiver and with data $\bi,\ba$ as in Example~\ref{X:cycle}, the $\psi^{\bi,\ba}_{\mu(\bullet)}$ are closely related but not identical to Shoji's polynomials $R_{\mu^{\bullet}}^+(z;\mathbf{t})$ \cite[(4.1.2)]{Sh3}. Taking the same data $\bi,\ba$ for the opposite cyclic quiver results in polynomials closely related to Shoji's $R_{\mu^{\bullet}}^-(z;\mathbf{t})$.
\end{rem}

The $\psi^{\bi,\ba}_{\mu(\bullet)}$ behave well with respect to the shuffle product, which one can expect by the nature of Lusztig's construction \cite{L:conv} (see Remark~\ref{R:lusztig-conv}). Given triples $(\bi',\ba',\mu'(\bullet))$ and $(\bi'',\ba'',\mu''(\bullet))$, let $\bi=\bi'\bi''$, $\ba=\ba'\ba''$, $\mu(\bullet)=\mu'(\bullet)\mu''(\bullet)$ be their concatenations. Then one can easily verify (or deduce from \eqref{E:chi-concat}) that:
\begin{align}
\psi^{\bi,\ba}_{\mu(\bullet)} &= \psi_{\mu'(\bullet)}^{\bi',\ba'} * \psi_{\mu''(\bullet)}^{\bi'',\ba''}.
\end{align}
where $\psi_{\mu'(\bullet)}^{\bi',\ba'}\in R(\cG_{\nu^\bullet(\bi',\ba')})$ and $\psi_{\mu''(\bullet)}^{\bi'',\ba''}\in R(\cG_{\nu^\bullet(\bi'',\ba'')})$

\comment{
\begin{align}
B_{\bi,\ba}(x,y) &= B_{\bi',\ba'}(x)B_{\bi'',\ba''}(y)\prod_{\substack{(i,j)\in Q_1\\k,\ell}}(1-q_{ij}x_k^{(i)}/y_\ell^{(j)})^{-1}\\
C_{\bi',\ba'}^{\bi'',\ba''}(x,y) &= 
\end{align}
}

\comment{
\begin{prop}
Lusztig's convolution between classes in $K'$ and $K''$ coincides with the shuffle product $*: R(\cG')\times R(\cG'')\to R(\cG)$. That is, we have a commutative diagram:
\begin{center}
\begin{tikzcd}
K'\arrow["\psi'\times\psi''",d] \times K'' \arrow[r,"*"] & K \arrow[d,"\psi"]\\
R(\cG')\times R(\cG'') \arrow[r,"*"] & R(\cG)
\end{tikzcd}
\end{center}
\end{prop}
}

\comment{
Let us consider special data $\bi,\ba,\mu(\bullet)$ having $\ba=(1,1,\dotsc,1)$ and $\mu(k)\ge \mu(l)$ (as integers) if $i_k=i_\ell$ and $k<\ell$. If we fix a total ordering of $Q_0=\{i_1,i_2,\dotsc\}$ and use it to arrange the entries of $\bi=(i_1,i_1,\dotsc,i_2,i_2,\dotsc)$ in order, then set of $\psi^{\bi,\ba}_{\mu(\bullet)}$ for such data reduce to the Schur basis of $\cS_Q$ at $t_{i,j}\equiv 0$. It follows that this subset of the $\psi^{\bi,\ba}_{\mu(\bullet)}$ form a $\Z[[t_{Q_1}]]$-basis of $\hcS_Q$. We obtain:

\begin{cor}\label{C:lusztig-shuffle}
Lusztig's convolution $*$ agrees with the shuffle product \eqref{E:shuffle-ast} on the entire equivariant $K$-groups, that is:
\begin{center}
\begin{tikzcd}
K^{\cG'}(E') \arrow["\psi'\times\psi''",d] \times K^{\cG''}(E'') \arrow[r,"*"] & K^{\cG}(E) \arrow[d,"\psi"]\\
R(\cG')\times R(\cG'') \arrow[r,"*"] & R(\cG)
\end{tikzcd}
\end{center}
is a commutative diagram.
\end{cor}
}

\comment{
\section{Dual elements}

Given a triple of data $(\bi,\ba,\mu(\bullet))$ as above, we have the equivariant Euler characteristic $\chi_{\mu(\bullet)}^{\bi,\ba}\in R(G^\bullet)[[t_{Q_1}]]$. In this section we consider the Hall inner product on $R(T^\bullet)[[t_{Q_1}]]$ and attempt to define elements of this space dual to $\chi_{\mu(\bullet)}^{\bi,\ba}$.

\subsection{Hall inner product}
Let
\begin{align}
\Delta^\bullet &= \prod_{\substack{i\in Q_0\\1\le k<\ell\le \nu^{(i)}}} (x_k^{(i)}-x^{(i)}_\ell).
\end{align}
Let $\pairaux{\cdot}{\cdot}$ be the pairing on $R(T^\bullet)[[t_{Q_1}]]$ with values in $\bQ[[t_{Q_1}]]$ given by
\begin{align}
\pairaux{f}{g} &= \frac{[x^0]\left(f g^* \Delta^\bullet(\Delta^\bullet)^*\right)}{|W^\bullet|}
\end{align}
where $[x^0]$ denotes the operation of taking the coefficient of $x^0$, i.e., the constant term. Here $g^*$ denotes the image of $g$ under the inovlution on $R(T^\bullet)[[t_{Q_1}]]$ sending each variable $x^{(i)}_k$ to its inverse. It is easy to see that the restriction of $\pairaux{\cdot}{\cdot}$ to $R(G^\bullet)[[t_{Q_1}]]\cong R(T^\bullet)^{W^\bullet}[[t_{Q_1}]]$ is the Hall inner product, namely:
\begin{align}
\pairaux{s_{\lad}}{s_{\mud}} = \delta_{\lad\mud}.
\end{align}

\subsection{}

\begin{align*}
|W^\bullet|\cK_{\lad,\mu(\bullet)}(t_{Q_1}) &= |W^\bullet|\pairaux{\chi_{\mu(\bullet)}^{\bi,\ba}}{s_{\lad}}\\
&=[x^0]\left(D_{w_0^\bullet}\left(x^{\mu(\bullet)}(B_{\bi,\ba}')^{-1}\right)s_\lad^*\Delta^\bullet(\Delta^\bullet)^*B_{\bi,\ba}B'_{\bi,\ba}\right)
\end{align*}

\newcommand{\ch}{\mathrm{ch}}
\newcommand{\cT}{\mathcal{T}}

\begin{align}
B_{\bi,\ba}' &= \ch_{\cT}\Sym(W')\\
W' &= \bigoplus_{\alpha\in R_{\bi,\ba}'}\gl(\bV)_\alpha
\end{align}
Recall the roots $\alpha_{p,q}(k,\ell)$ from \eqref{E:roots}.
Let $R_{\bi,\ba}'$ be the set consisting of the following roots:
\begin{enumerate}
\item $\alpha_{p,q}(k,\ell)$ such that $1\le \ell<k\le m$, $(i_k,i_\ell)\in Q_1$, $1\leq p\leq a_k$, $1\leq q\leq a_\ell$
\item $\alpha_{p,q}(k,k)$ such that $1\le k\le m$, $(i_k,i_k)\in Q_1$, and $1\leq p\neq q\leq a_k$.
\end{enumerate}
The roots of $R_{\bi,\ba}'$ in (1) are negative roots of $\gl(\bV)$, while those in (2) may be positive or negative. The set $R_{\bi,\ba}\sqcup R_{\bi,\ba}'$ is the smallest $W^\bullet$-invariant set containing $R_{\bi,\ba}$. We note the roots in (1) are obtained by reading the sequences $\bi,\ba$ in reverse. The roots in (2) are not present when:
\begin{align}\label{E:depth-assumption}
\text{$a_k=1$ for all $k$ such that $(i_k,i_k)\in Q_1$.}
\end{align}

\begin{ex}
For the data of Example \ref{X:bundle}, the roots of $R_{\bi,\ba}$ $(*)$ and $R_{\bi,\ba}'$ $(*')$ are as follows:
\begin{align*}
\begin{array}{|c||c|c|c|c|cc|}\hline 
i_k \backslash i_\ell  & 0 & 0 & 1 & 0 & \multicolumn{2}{c|}{1} \\ \hline  \hline
0 & & * & * & * & * & *  \\ \hline
0 & *' &   & * & * & * & * \\ \hline
1 & &   &   &   &   &   \\ \hline
0 & *' & *'  & *'  &   & * & * \\ \hline
\multirow{2}{*}{1}&   &   &   &   & & \\ 
 &  &  &   &  & & \\ \hline
\end{array}
\end{align*}
For the same quiver with $\bi=(0,1,0,1)$ and $\ba=(2,1,1,2)$ the roots are as follows:
\begin{align*}
\begin{array}{|c||cc|c|c|cc|}\hline 
i_k \backslash i_\ell & \multicolumn{2}{c|}{0} & 1 & 0 & \multicolumn{2}{c|}{1} \\ \hline  \hline
\multirow{2}{*}{0} & & *' & * & * & * & * \\
 & *' & & * & * & * & * \\ \hline
1 & & & & & & \\ \hline
0 & *' & *' & *' & & & \\ \hline
\multirow{2}{*}{1} & & & & & & \\
 & & & & & & \\ \hline
\end{array}
\end{align*}
\end{ex}

\begin{align}
\pairaux{f}{g}_{\bi,\ba} &= \pairaux{fB _{\bi,\ba}B'_{\bi,\ba}}{g}
= \frac{[x^0]\left(f g^* B_{\bi,\ba}B'_{\bi,\ba}\Delta^\bullet(\Delta^\bullet)^*\right)}{|W^\bullet|}
\end{align}

If we define
\begin{align}
R_{\mu(\bullet)}^{\bi,\ba} &= D_{w_0^\bullet}\left(x^{\mu(\bullet)}(B_{\bi,\ba}')^{-1}\right)
\end{align}
then
\begin{align}
\pairaux{\chi_{\mu(\bullet)}^{\bi,\ba}}{s_\lad} &= \pairaux{R_{\mu(\bullet)}^{\bi,\ba}}{s_\lad}_{\bi,\ba}.
\end{align}

Better would be:
\begin{align}\label{E:chi-R-orthogonal}
\pairaux{\chi_{\mu(\bullet)}^{\bi,\ba}}{R_{\la(\bullet)}^{\bi,\ba}} &= \delta_{\mu(\bullet),\la(\bullet)}\dotsm
\end{align}

If instead we define
\begin{align}
R_{\mu(\bullet)}^{\bi,\ba} &= D_{w_0^\bullet}\left(x^{\mu(\bullet)}(B_{\bi,\ba}^*)^{-1}\right)
\end{align}
then
\begin{align}
\pairaux{\chi_{\mu(\bullet)}^{\bi,\ba}}{R_{\lambda(\bullet)}^{\bi,\ba}} &= 
\pairaux{D_{w_0^\bullet}\left(x^{\la(\bullet)}(B_{\bi,\ba}')^*\right)}{D_{w_0^\bullet}\left(x^{\mu(\bullet)}(B_{\bi,\ba}')^{-1}\right)}.
\end{align}

Note that when \eqref{E:depth-assumption} holds, $D_{w_0^\bullet}\left(x^{\la(\bullet)}(B_{\bi,\ba}')^*\right)$ expands into Schur functions $s_{\pi^\bullet}$ such that $\pi^\bullet \dom \lad$ and $s_{\la^{(\bullet)}}$ appears with coefficient $1$.

If we can show that $D_{w_0^\bullet}\left(x^{\mu(\bullet)}(B_{\bi,\ba}')^{-1}\right)$ expands into $s_{\pi^\bullet}$ such that $\mu^{(\bullet)}\dom\pi^\bullet$, and similarly for $R^{\bi,\ba}_{\la(\bullet)}$, then we can prove \eqref{E:chi-R-orthogonal}.

Since $\la^{(\bullet)}\dom w^\bullet(\la^{(\bullet)})$ for dominant $\la^{(\bullet)}\in X^\bullet$ and any $w^\bullet\in W^\bullet$, it is sufficient to show that for any subset $S$ of either $R'_{\bi,\ba}$ or $-R_{\bi,\ba}$ and any $w^\bullet\in W^\bullet$, we have
\begin{align}\label{E:dominance-condition}
\rho^\bullet \dom w^{\bullet}\left(\rho^\bullet+\sum_{\al\in S}\al\right)
\end{align}
Unfortunately this does not hold in general, as shown by the following example.

\begin{ex}
Let $Q$ be the quiver with $Q_0=\{0,1,2\}$ and $Q_1=\{(0,1),(0,2)\}$. Let $\bi=(0,1,2,0)$ and $\ba=(1,1,1,1)$. Let $w^\bullet$ be given by $w^{(0)}=w_0=s_1=(12)\in S_2$ and $w^{(i)}=\mathrm{id}\in S_1$ for $i=1,2$. We have $\rho^\bullet-w^\bullet(\rho^\bullet)=e^{(0)}_1-e^{(0)}_2$. Let us take $-S=\{e^{(0)}_1-e^{(1)}_1,e^{(0)}_1-e^{(2)}_1\}\subset R_{\bi,\ba}$. Then
\begin{align*}
w^\bullet\left(\sum_{\al\in S}\al\right) &= -(e^{(0)}_2-e^{(1)}_1) - (e^{(0)}_2-e^{(2)}_1)\\
\rho^\bullet-w^{\bullet}\left(\rho^\bullet+\sum_{\al\in S}\al\right) &= (e^{(0)}_1-e^{(0)}_2)+(e^{(0)}_2-e^{(1)}_1) + (e^{(0)}_2-e^{(2)}_1)\\
&=  (e^{(0)}_1-e^{(1)}_1) + (e^{(0)}_2-e^{(2)}_1)\\
&= (e^{(0)}_1-e^{(1)}_1) - (e^{(2)}_1-e^{(0)}_2)
\end{align*}
which does not belong to the $\Z_{\ge 0}$-span of $R_+(\bV)$ because it is a difference of distinct simple $\mathfrak{gl}(\bV)$ roots.
\end{ex}

\begin{ex}
The same behavior is exhibited by the following example involving the single arrow quiver, but which has some $a_k>1$.

$Q_0=\{0,1\}$, $Q_1=\{(0,1)\}$, $\bi=(0,1,0)$, $\ba=(1,2,1)$

$-S=\{e^{(0)}_1-e^{(1)}_1, e^{(0)}_1-e^{(1)}_2\}$

$w^{(0)}=s_1$, $w^{(1)}=\mathrm{id}$

$\rho^\bullet-w^{\bullet}\left(\rho^\bullet+\sum_{\al\in S}\al\right)=(e^{(0)}_1-e^{(1)}_1)+(e^{(0)}_2-e^{(1)}_2)$
\end{ex}

These examples demonstrate that strong restrictions on the quiver and data $(\bi,\ba,\mu(\bullet))$ are required to prove \eqref{E:dominance-condition}. Below we will consider a situation where \eqref{E:dominance-condition} does hold.

\subsection{Special cyclic quiver case (Shoji case)}

Assume that $Q$ is a directed path or cycle with edges $(i,i+1)$ for $i\in Q_0$ (or $Q_0$ less one vertex). Let $(\bi,\ba)$ be given data such that $\bi=(i,i+1,i+2,\dotsc)$ and $\ba=(1,\dotsc,1)$.
Write $\bi =(i,\bi_{>})$. We have
\begin{align}
R_{\bi,\ba} = R_{\bi_>,\ba_>}\sqcup \{e^{(i)}_k-e^{(i+1)}_\ell \mid \text{\fixit{how to write this?}}\}
\end{align}

Let $S_>=S\cap R_{\bi_>,\ba_>}$.
\begin{align}
\rho^\bullet-w^{\bullet}\left(\rho^\bullet+\sum_{\al\in S}\al\right) &= \sum_{j\neq i} \left(\rho^{(j)}-w^{(j)}\left(\rho^{(j)}+\sum_{\al\in S_>}\al\right)\right)
\end{align}

\fixit{Triangularity does not hold even in the special Shoji case!}
}

\subsection{$(q,t)$-currents and shuffles}\label{SS:qt-preprojective}

Given a quiver $Q$, let $\overline{Q}$ be the doubled quiver obtained by adding the opposite of each arrow in $Q$. Let $Q_1$ denote the original arrows in $Q$ and $Q_1'$ their opposites in $\overline{Q}$. We assume that all arrow variables $t_{b}\equiv t$ from $Q_1$ are equal and we let $q$ be an arrow variable for the opposite arrows. We will define a double version $\overline{H}^{(i,a)}(U)$ of the quiver currents, which act on quiver symmetric functions $\Lambda^{Q}$ over the ring $\Z[q^{\pm 1},t^{\pm 1}]$. When $q=0$, the double currents reduce to our original quiver currents $H^{(i,a)}(U)$ at $t_{b}\equiv t$.

We use the notation of Section~\ref{SS:quiver-sym-fun}. For any $i\in Q_0$ and $a\in\Z_{>0}$,  the $(q,t)$-quiver current $\overline{H}^{(i,a)}(U)=\sum_{\beta\in\bZ^a} u^\beta \overline{H}^{(i,a)}_{\beta}$ is defined by the following formula:
\begin{align*}
R(U) \Omega[UX^{(i)}] \Omega[-U^*X^{(i)}(1+qt)]^\perp
    \prod_{a\in\Out'(i)}\Omega[q U^*X^{(ha)}]^\perp
    \prod_{b\in\Out(i)}\Omega[t U^*X^{(hb)}]^\perp
\end{align*}
where $\Out(i)=\{b\in Q_1 \mid tb=i\}$ as before and $\Out'(i)=\{a\in Q_1' \mid ta=i \}$.

\begin{rem}
When $Q$ is the Jordan quiver and $a=1$, these are the symmetric function operators $D_k$ of \cite{GHT}, up to a plethystic minus sign. In the case of a cyclic quiver with $r\ge 2$ vertices, the $(q,t)$-currents with $a=1$ arise naturally in the vertex representation \cite{Sa} of quantum toroidal $\fsl_r$ (see Appendix~\ref{S:toroidal}).
\end{rem}

Recall the conventions of \S\ref{SS:current-shuffle}. Following \cite[Corollary 3.6]{YZ}, we define for any dimension vectors $\alpha^\bullet$, $\beta^\bullet$, the shuffle product:
\begin{align}
\label{E:qt-shuffle-star}
f \ \overline{\star}\ g &= D_{w_0^\bullet}\left(f(u)g(v)\frac{\prod_{\substack{i\in Q_0\\k,\ell}}(1-qt u_k^{(i)}/v_\ell^{(i)})}{\prod_{\substack{a \in Q_1'\\k,\ell}}(1-q u^{(ta)}_k/v^{(ha)}_\ell)\prod_{\substack{b\in Q_1\\k,\ell}}(1-tu^{(tb)}_k/v^{(hb)}_\ell)}\right).
\end{align}
This belongs to $R(G_{\alpha^\bullet+\beta^\bullet})\otimes_{\Z}\Z((q,t))$. The geometrically-defined algebra associated with this shuffle product (or rather, its $\ast$-variant as in \eqref{E:shuffle-ast}) is the {\em preprojective} $K$-theoretic Hall algebra, as developed in \cite[\S3.2 and \S4.1]{YZ}.

The proof of Proposition~\ref{P:quiver-current-shuffle} carries through in this setting to show that the $(q,t)$-currents $\overline{H}^{(i,a)}(U)$ provide the same kind of symmetric function lifting for the shuffle product $\overline{\star}$:
\begin{prop}
\label{P:qt-quiver-current-shuffle}
Assume the setup of Proposition~\ref{P:quiver-current-shuffle}. For any $\xi^\bullet\in \bY^{Q_0}_{\beta^\bullet}$ and $\lambda^\bullet\in\bY^{Q_0}_{\alpha^\bullet+\beta^\bullet}$, the coefficient of $s_\lad[X^\bullet]$ in the Schur expansion of
\begin{align}
\overline{H}^{(i,a)}_\mu \cdot s_{\xi^\bullet}[X^\bullet]
\end{align}
is equal to the coefficient of $s_\lad[U+V]$ in the Schur expansion of the shuffle product $s_\mu[U]\ \overline{\star}\ s_{\xi^\bullet}[V]$.
\end{prop}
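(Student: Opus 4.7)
The plan is to adapt the proof of Proposition~\ref{P:quiver-current-shuffle}, tracking each additional factor present in $\overline{H}^{(i,a)}(U)$ through the computation until it appears as a corresponding factor in the kernel of the shuffle product $\overline{\star}$.

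First, I would apply $\overline{H}^{(i,a)}(U)$ to the Cauchy-type generating function $\prod_{j\in Q_0}\Omega[v^{(j)}X^{(j)}]$, as in the proof of Proposition~\ref{P:quiver-current-shuffle}. Using the identity $f[X^{(j)}]^\perp \,\Omega[v^{(j)}X^{(j)}] = f[v^{(j)}]\,\Omega[v^{(j)}X^{(j)}]$, the three skewing operators appearing in $\overline{H}^{(i,a)}(U)$ --- namely $\Omega[-U^*X^{(i)}(1+qt)]^\perp$, $\Omega[qU^*X^{(ha)}]^\perp$ for $a\in\Out'(i)$, and $\Omega[tU^*X^{(hb)}]^\perp$ for $b\in\Out(i)$ --- push past the generating function, contributing respectively $\Omega[-U^*v^{(i)}(1+qt)]$, $\Omega[qU^*v^{(ha)}]$, and $\Omega[tU^*v^{(hb)}]$. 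Pairing with $s_{\lad}[X^\bullet]$ then collapses $\Omega[UX^{(i)}]\prod_j\Omega[v^{(j)}X^{(j)}]$ to $s_{\lad}[U+V]$ via the Cauchy identity.

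Next, extracting the coefficient of $u^\mu s_{\xi^\bullet}[V]$ proceeds exactly as in the proof of Proposition~\ref{P:quiver-current-shuffle}: the factor $R(U)$ supplied by $\overline{H}^{(i,a)}$ combines with an $R(V)$, arising from the Weyl-denominator form of $s_{\xi^\bullet}[V]$, and with $\Omega[-U^*v^{(i)}]$, the $q=t=0$ specialization of $\Omega[-U^*v^{(i)}(1+qt)]$, to assemble $R(Z)$ where $Z$ is the ordered union of $U$ and $V$. Applying $J^\bullet$ and the $(w^\bullet)^{-1}$-inversion of variables then produces an expression of the form
\[
[z^{\lad+\rhod}]\, J^\bullet\!\left(z^{\mu+\xi^\bullet+\rhod}\, \mathcal{F}(Z)\right),
\]
where $\mathcal{F}(Z)$ collects the remaining $\Omega$-factors with the roles of $u$ and $v$ exchanged.

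The final step is to verify that $\mathcal{F}(Z)$ equals the rational kernel of $\overline{\star}$ appearing in \eqref{E:qt-shuffle-star}. The residual piece $\Omega[-qtU^*v^{(i)}]$ of $\Omega[-U^*v^{(i)}(1+qt)]$ becomes, after inversion, $\prod_{k,\ell}(1-qt u_k^{(i)}/v_\ell^{(i)})$, which is the vertex-$i$ contribution to the numerator of $\overline{\star}$; contributions from $j\neq i$ vanish trivially since $U$ is supported only at $i$. The $\Omega[qU^*v^{(ha)}]$ factors yield the $Q_1'$-denominators $\prod_{k,\ell}(1-q u_k^{(ta)}/v_\ell^{(ha)})^{-1}$ with $ta=i$, and similarly the $\Omega[tU^*v^{(hb)}]$ factors produce the $Q_1$-denominators with $tb=i$. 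No essential obstacle is expected: this is a direct bookkeeping extension of Proposition~\ref{P:quiver-current-shuffle}, and the only delicate point is the clean separation of the $R(Z)$-producing part of $\Omega[-U^*v^{(i)}(1+qt)]$ from the numerator-producing $qt$-part.
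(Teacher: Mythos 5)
Your proposal is correct and follows exactly the route the paper intends: the paper's own justification is the single sentence that the proof of Proposition~\ref{P:quiver-current-shuffle} carries through, and your bookkeeping of the extra factors --- splitting $\Omega[-U^*v^{(i)}(1+qt)]$ into the $R(Z)$-producing piece $\Omega[-U^*v^{(i)}]$ and the numerator-producing piece $\Omega[-qtU^*v^{(i)}]$, and matching the $\Out'(i)$ and $\Out(i)$ factors to the $Q_1'$ and $Q_1$ denominators after variable inversion --- is precisely the right way to carry it through.
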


\begin{rem}
Similarly as in Remark~\ref{R:shuffle-act}, one can show using Proposition~\ref{P:qt-quiver-current-shuffle} that the assignment $s_\mu[U]\mapsto \overline{H}^{(i,a)}_\mu(U)$ extends to an action on $\Lambda^Q$ by the algebra generated under $\overline{\star}$ by the $R(G_{\alpha_\bullet})$ with $\alpha^\bullet$ supported at a single vertex.
\end{rem}

\appendix
\section{Vertex representation of quantum toroidal $\fsl_r$}\label{S:toroidal}

In this appendix we assume that $Q$ is the cyclic quiver with vertices $Q_0=\Z/r\Z$ and arrows $Q_1=\{(i,i+1) \mid i\in Q_0\}$, where $r\ge 2$. Our aim is to relate the $(q,t)$-quiver currents $\overline{H}^{(i,1)}(u)$ in one variable ($a=1$) to the vertex representation~\cite{Sa} of the quantum toroidal algebra $\Utor$. These currents are given explicity as
\begin{align}
\overline{H}^{(i,1)}(u) &= \Omega\left[uX^{(i)}\right]\Omega\left[-u^{-1}\left(X^{(i)}-tX^{(i+1)}-qX^{(i-1)}+qtX^{(i)}\right)\right]^\perp.\label{E:qt-current}
\end{align}

\subsection{Notation}
Let $\{\al_i \mid i\in\Z/r\Z\}$ and $\{\al_i^\vee \mid i\in\Z/r\Z\}$ be the standard simple roots and simple coroots for $\widehat{\fsl_r}$. Let $\pair{\cdot}{\cdot}$ be the canonical pairing between coroots and roots. Set $\alb_i = \al_i$ and $\alb_i^\vee = \al_i^\vee$ for $i\neq 0$ and
\begin{align*}
\alb_0 &=-\sum_{i\neq 0}\alb_i,\qquad
\alb_0^\vee =-\sum_{i\neq 0}\alb_i^\vee.
\end{align*}
Let $P$ and $L$ be the weight lattice and root lattice of $\widehat{\fsl_r}$, and let $\Pb,\Lb$ be the corresponding lattices for $\fsl_r$.

\subsection{Quantum toroidal $\fsl_{r}$}

Quantum toroidal algebras were introduced in \cite{GKV} for any semisimple Lie algebra, where they were defined using a presentation inspired by the Drinfeld presentation of quantum affine algebras~\cite{Be}. We will consider only the quantum toroidal algebra $\Utor$ of type $\fsl_{r}$, which is the associative algebra over $\mathbb{K}=\C(q^{\frac{1}{2}},t)$ generated by
\begin{align*}
&E^{(i)}_{k}\, ,\, F^{(i)}_{k}&&\text{for $i\in \Z/r\Z$ and $k\in\Z$}\\
&K^{(i)}_{\pm,k} &&\text{for $i\in \Z/r\Z$, $k\in\Z_{\ge 0}$, and $\pm=+,-$}
\end{align*}
subject to certain relations. To state the relations, we collect the generators into currents:
\begin{align}
E^{(i)}(u) &= \sum_{k\in\Z}E^{(i)}_{k}u^{-k}\\
F^{(i)}(u) &= \sum_{k\in\Z}F^{(i)}_{k}u^{-k}\\
K^{(i)}_\pm(u) &= \sum_{k\ge 0}K^{(i)}_{\pm,k}u^{\mp k}.
\end{align}
(For the purposes of our discussion, we ignore the elements $q^{d_1},q^{d_2}$ and we specialize the central element $q^{\frac{1}{2}c}\mapsto q^{\frac{1}{2}}$.)

For any $m\in\Z$, let
\begin{align}
\theta_m(u) &= \frac{q^m u-1}{u-q^m}
\end{align}
and let $M=(m_{ij})_{i,j\in\Z/r\Z}$ be the matrix given by:
\begin{align}\label{E:M}
m_{ij} = \begin{cases}\pm 1&\text{if $i=j\pm 1$}\\0 &\text{otherwise.}\end{cases}
\end{align}
Then the defining relations of $\Utor$ are as follows for all $i,j\in\Z/r\Z$:
\begin{align}
&K^{(i)}_{+,0} K^{(i)}_{-,0} = K^{(i)}_{-,0} K^{(i)}_{+,0} = 1\\
&K^{(i)}_\pm(u)K^{(j)}_\pm(v) = K^{(j)}_\pm(v)K^{(i)}_\pm(u)\label{E:K-same}\\
&\theta_{-\pair{\al_i^\vee}{\al_j}}(q^{-1}t^{m_{ij}}u/v)K^{(i)}_-(u)K^{(j)}_+(v) = \theta_{-\pair{\al_i^\vee}{\al_j}}(q t^{m_{ij}}u/v)K^{(j)}_+(v)K^{(i)}_-(u)\label{E:K-opp}\\
&K^{(i)}_\pm(u)E^{(j)}(v) = \theta_{\mp\pair{\al_i^\vee}{\al_j}}(q^{-\frac{1}{2}}(t^{-m_{ij}}v/u)^{\pm 1})E^{(j)}(v)K^{(i)}_\pm(u)\\
%
%
&K^{(i)}_\pm(u)F^{(j)}(v) = \theta_{\pm\pair{\al_i^\vee}{\al_j}}(q^{\frac{1}{2}}(t^{-m_{ij}}v/u)^{\pm 1})F^{(j)}(v)K^{(i)}_\pm(u)\\
%
%
&(t^{m_{ij}}u-q^{\pair{\al_i^\vee}{\al_j}}v)E^{(i)}(u)E^{(j)}(v) = (q^{\pair{\al_i^\vee}{\al_j}}t^{m_{ij}}u-v)E^{(j)}(v)E^{(i)}(u)\\
&(t^{m_{ij}}u-q^{-\pair{\al_i^\vee}{\al_j}}v)F^{(i)}(u)F^{(j)}(v) = (q^{-\pair{\al_i^\vee}{\al_j}}t^{m_{ij}}u-v)F^{(j)}(v)F^{(i)}(u)\\
&[E^{(i)}(u),F^{(j)}(v)] = \frac{\delta_{ij}}{q-q^{-1}}(\delta(qv/u)K^{(i)}_+(q^{\frac{1}{2}}v)-\delta(qu/v)K^{(i)}_-(q^{\frac{1}{2}}u))
\end{align}
where $\delta(x)=\sum_{n\in\Z} x^n$, and the following quantum Serre relations:
\begin{align}
\sum_{\sigma\in S_2} \Big(&E^{(i)}(u_{\sigma(1)})E^{(i)}(u_{\sigma(2)})E^{(i\pm 1)}(v)
-(q+q^{-1})E^{(i)}(u_{\sigma(1)})E^{(i\pm 1)}(v)E^{(i)}(u_{\sigma(2)})\\
&\quad +E^{(i\pm 1)}(v)E^{(i)}(u_{\sigma(1)})E^{(i)}(u_{\sigma(2)})\Big)=0\notag\\
\sum_{\sigma\in S_2}\Big(&F^{(i)}(u_{\sigma(1)})F^{(i)}(u_{\sigma(2)})F^{(i\pm 1)}(v)-(q+q^{-1})F^{(i)}(u_{\sigma(1)})F^{(i\pm 1)}(v)F^{(i)}(u_{\sigma(2)})\\
&\quad +F^{(i\pm 1)}(v)F^{(i)}(u_{\sigma(1)})F^{(i)}(u_{\sigma(2)})\Big)=0\notag.
\end{align}

\subsection{Heisenberg algebra}

We can replace the generators $K^{(i)}_{\pm,k}$ of $\Utor$ by other generators $P^{(i)}_{\pm k}$ for $i\in\Z/r\Z$ and $k>0$ by imposing the following equality of formal series:
\begin{align}
K^{(i)}_{\pm}(u) &= K^{(i)}_{\pm,0}\exp\left(\pm(q-q^{-1})\sum_{k\ge 1} P^{(i)}_{\pm k} u^{\mp k}\right).
\end{align}
Then relations \eqref{E:K-same} and \eqref{E:K-opp} are then equivalent to the following Heisenberg-type relations for the $P^{(i)}_{\pm k}$:
\begin{align}\label{E:heis1}
[P^{(i)}_{k},P^{(i)}_{-k}] &= \frac{1}{k}\frac{(q^k-q^{-k})(q^{2k}-q^{-2k})}{(q-q^{-1})^2}\\
[P^{(i)}_{k},P^{(i+1)}_{-k}] &= \frac{t^{-k}}{k}\frac{(q^k-q^{-k})^2}{(q-q^{-1})^2}\label{E:heis2}\\
[P^{(i)}_{k},P^{(i-1)}_{-k}] &= -\frac{t^k}{k}\frac{(q^k-q^{-k})^2}{(q-q^{-1})^2}\label{E:heis3}
\end{align}
and all other $[P^{(i)}_{k},P^{(j)}_{l}]=0$.

The Fock space representation of the Lie algebra spanned by the $P^{(i)}_{\pm k}$ is multisymmetric functions $\Lambda^{Q}=\otimes_{i\in \Z/r\Z}\Lambda^{(i)}$, where:
\begin{align}
\label{E:heis-act+}
P^{(i)}_{k} &\mapsto \frac{p_k[(X^{(i)}(q+q^{-1})-X^{(i+1)}t^{-1}-X^{(i)}t)(q-q^{-1})]^\perp}{k(q-q^{-1})}\\
\label{E:heis-act-}
P^{(i)}_{-k} &\mapsto \frac{p_k[X^{(i)}(q-q^{-1})]}{k(q-q^{-1})}.
\end{align}
This is verified using the following relation between operators on $\Lambda$:
\begin{align}
[p_k^\perp,p_k] = k.
\end{align}
In other words, $p_k^\perp = k\frac{\del}{\del p_k}$.

\subsection{Skew group algebra}
The definition of the vertex representation of $\Utor$ requires an auxiliary algebra $\K\{\Lb\}$, which is a skew-version of the group algebra $\K[\Lb]$. 

\newcommand{\es}{\mathsf{e}}

The weight lattice $\Pb$ of $\fsl_r$ has a $\Z$-basis given by the fundamental weights $\{\Lambdab_i \mid i\neq 0\}$. Another basis of $\Pb$ is $\{\alb_2,\dotsc,\alb_{r-1},\Lambdab_{r-1}\}$. Define $\K\{\Pb\}$ to be the $\K$-algebra generated by $\es^{\pm \alb_2},\dotsc,\es^{\pm\alb_{r-1}},\es^{\pm\Lambdab_{r-1}}$ subject to the relations:
\begin{align}
\es^{\alb_i}\es^{-\alb_i} &= \es^{-\alb_i}\es^{\alb_i}=1\\
\es^{\Lambdab_{r-1}}\es^{-\Lambdab_{r-1}} &= \es^{-\Lambdab_{r-1}}\es^{\Lambdab_{r-1}} = 1\\
\es^{\alb_i}\es^{\alb_j} &= (-1)^{\pair{\al_i^\vee}{\al_j}}\es^{\alb_j}\es^{\alb_j}\\
\es^{\alb_i}\es^{\Lambdab_{r-1}} &= (-1)^{\pair{\alb_i^\vee}{\Lambdab_{r-1}}}\es^{\Lambdab_{r-1}}\es^{\alb_i}
\end{align}
where $i,j\neq 0,1$. For any element $\overline{\beta}=m_2\alb_2+\dotsm+m_{r-1}\alb_{r-1}+m\Lambdab_{r-1}\in\Pb$, we define a monomial
\begin{align}\label{E:skew-monomial}
\es^{\overline{\beta}}=(\es^{\alb_2})^{m_2}\dotsm (\es^{\alb_{r-1}})^{m_{r-1}} (\es^{\Lambdab_{r-1}})^m.
\end{align}
These monomials form a $\K$-basis for $\K\{\Pb\}$.

\comment{
We note that
\begin{align*}
\alb_1 &= -2\alb_2-\dotsm-(r-2)\alb_{r-2}-(r-1)\alb_{r-1}+r\Lambdab_{r-1}
\end{align*}
and hence
\begin{align*}
\es^{\alb_1}\es^{\alb_j} &= (-1)^{\pair{\alb_1^\vee}{\alb_j}}\es^{\alb_j}\es^{\alb_1},\qquad 
(\es^{\alb_1})^{-1} = \es^{-\alb_1}.
\end{align*}
}

Let $\K\{\Lb\}$ be the subalgebra of $\K\{\Pb\}$ generated by $\es^{\pm\alb_i}$ for $i\neq 0$.
The defining relations of $\K\{\Lb\}$ with respect to these generators are for all $i,j\neq 0$:
\begin{align}
\es^{\alb_i}\es^{-\alb_i} &= \es^{-\alb_i}\es^{\alb_i} = 1\\
\es^{\alb_i}\es^{\alb_j} &= (-1)^{\pair{\alb_i^\vee}{\alb_j}}\es^{\alb_j}\es^{\alb_i}.
\end{align}
However, we stress that all monomials $\es^{\overline{\beta}}$ in $\K\{\Pb\}$ and $\K\{\Lb\}$ are defined via \eqref{E:skew-monomial}.

\comment{
\begin{ex}
Suppose that $r=4$. Then $\alb_1 = -2\alb_2-3\alb_3+4\Lambdab_3$ and
\begin{align*}
\es^{\alb_1}\es^{\alb_2} &= \es^{-2\alb_2}\es^{-3\alb_3}\es^{4\Lambdab_3}\es^{\alb_2}\\
&=-\es^{-\alb_2}\es^{-3\alb_3}\es^{4\Lambdab_3}\\
&= -\es^{\alb_1+\alb_2}.
\end{align*}
Because of such sign differences, we choose to follow \cite{Sa} and use \eqref{E:skew-monomial} to define all monomials.
\end{ex}
}

We regard elements of $\K\{\Lb\}$ as operators on $\K\{\Lb\}$ acting by left multiplication. We introduce additional operators $u^{P^{(i)}_0}$ for $i\in\Z/r\Z$ acting from $\K\{\Lb\}$ to $\K\{\Lb\}[u^{\pm 1}]$ as follows:
\begin{align}
u^{P^{(i)}_0}\cdot \es^{\overline{\beta}} &= u^{\pair{\alb_i^\vee}{\overline{\beta}}}t^{\frac{1}{2}\sum_{j=1}^n \pair{\alb_i^\vee}{m_j\alb_j}M_{ij}}\es^{\overline{\beta}}
\end{align}
where $\overline{\beta}=\sum_{j=1}^{r-1} m_j\alb_j$ and $M$ is the matrix \eqref{E:M}. Equivalently, to compute the action of $u^{P^{(i)}_0}$, one can use its commutation relations with multiplication operators:
\begin{align}
u^{P^{(i)}_0} \es^{\alb_j} &= \es^{\alb_j}u^{P^{(i)}_0}\times \begin{cases}u^{2}&\text{if $i=j$}\\u^{-1}t^{\mp\frac{1}{2}} &\text{if $i=j\pm 1$}\\1 &\text{otherwise}\end{cases}
\end{align}
where $j\neq 0$, together with its action on $1=\es^0$:
\begin{align}
u^{P^{(i)}_0}\cdot 1 = 1.
\end{align}

Finally, we define operators $q^{\del_{\alb_i}}$ for $i\in\Z/r\Z$ acting on $\K\{\Lb\}$ as follows:
\begin{align}
q^{\del_{\alb_i}}\cdot \es^{\overline{\beta}} &= q^{\pair{\alb_i^\vee}{\overline{\beta}}}\es^{\overline{\beta}}.
\end{align}

\comment{
\begin{align}
q^{\al_i} \es^{\al_i} &= q^{2}\es^{\al_j}q^{\al_i} \\
q^{\al_i} \es^{\al_{i+1}} &= q^{-1} \es^{\al_{i+1}}q^{\al_i}\\
q^{\al_i} \es^{\al_{i-1}} &= q^{-1} \es^{\al_{i-1}}q^{\al_i}
\end{align}
}

\subsection{Vertex representation}

Let $[k]_q = \frac{q^k-q^{-k}}{q-q^{-1}}$ for any $k\in\Z$.

\begin{prop}{\rm ($p=0$ case of \cite[Proposition 3.2.2]{Sa})}\label{P:toroidal-vertex}
The following assignment gives rise to an action of $\Utor$ on $\Lambda^{Q}\otimes \K\{\Lb\}$:
\begin{align}
E^{(i)}(u) &\mapsto \exp\left(\sum_{k\ge 1}\frac{P^{(i)}_{-k}}{[k]_q}(q^{-\frac{1}{2}}u)^k\right)\exp\left(-\sum_{k\ge 1}\frac{P^{(i)}_k}{[k]_q}(q^{\frac{1}{2}}u)^{-k}\right) \otimes \es^{\alb_i}u^{1+P^{(i)}_0}\\
F^{(i)}(u) &\mapsto \exp\left(-\sum_{k\ge 1}\frac{P^{(i)}_{-k}}{[k]_q}(q^{\frac{1}{2}}u)^k\right)\exp\left(\sum_{k\ge 1}\frac{P^{(i)}_k}{[k]_q}(q^{-\frac{1}{2}}u)^{-k}\right)\otimes \es^{-\alb_i}u^{1-P^{(i)}_0}\\
K^{(i)}_\pm(u) &\mapsto \exp\left(\pm(q-q^{-1})\sum_{k\ge 1} P^{(i)}_{\pm k} u^{\mp k}\right) \otimes q^{\pm\del_{\alb_i}}
\end{align}
for any $i\in\Z/r\Z$, where the $P^{(i)}_{\pm k}$ act on $\Lambda^Q$ according to \eqref{E:heis-act+} and \eqref{E:heis-act-}.
\end{prop}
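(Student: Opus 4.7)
Since the statement is precisely the $p=0$ case of Saito's result \cite[Proposition 3.2.2]{Sa}, the plan is to verify that our explicit realization on $\Lambda^Q\otimes\K\{\Lb\}$ matches (a slight rescaling of) Saito's Fock space construction, and that the defining relations can be checked by the standard vertex-operator calculus. The action factors through a tensor product: the modes $P^{(i)}_{\pm k}$ act only on $\Lambda^Q$ while the lattice-type operators $\es^{\pm\alb_i}$, $u^{P^{(i)}_0}$, $q^{\pm\del_{\alb_i}}$ act only on $\K\{\Lb\}$. Thus the key translation to be checked is that the plethystic formulas \eqref{E:heis-act+}--\eqref{E:heis-act-} on $\Lambda^Q$ realize the correct Heisenberg algebra; once this is done, Saito's relation-by-relation verification carries over verbatim.

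First, I would verify the Heisenberg relations \eqref{E:heis1}--\eqref{E:heis3} directly from \eqref{E:heis-act+}--\eqref{E:heis-act-}. Using the canonical commutator $[p_k^\perp,p_\ell]=k\delta_{k\ell}$ on $\Lambda$ and the fact that $p_k[\alpha X+\beta Y]=\alpha p_k[X]+\beta p_k[Y]$ plethystically, the bracket $[P^{(i)}_k,P^{(j)}_{-k}]$ reduces to extracting the coefficient of $p_k[X^{(i)}]p_k[X^{(j)}]^\perp$ from the product, which yields exactly the prescribed scalars: the factor $(q+q^{-1})-(t+t^{-1})\delta_{\text{\rm adjacent}}$ arising from the plethystic argument in \eqref{E:heis-act+} produces $(q^k-q^{-k})(q^{2k}-q^{-2k})/k(q-q^{-1})^2$ when $i=j$ and $\pm t^{\mp k}(q^k-q^{-k})^2/k(q-q^{-1})^2$ when $j=i\pm 1$. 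All other commutators vanish because the plethystic argument has zero support at the relevant vertex. This identifies $\Lambda^Q$ with the polynomial Fock space of the Heisenberg subalgebra of $\Utor$ generated by the $P^{(i)}_{\pm k}$.

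Second, I would verify the remaining relations by the standard vertex-operator identities. The $K$-$K$ relations \eqref{E:K-same}--\eqref{E:K-opp} follow immediately from the Heisenberg relations via $e^A e^B=e^{[A,B]}e^B e^A$ for central $[A,B]$, combined with the commuting $q^{\pm\del_{\alb_i}}$ factors. The $K$-$E$ and $K$-$F$ relations reduce to computing $[P^{(i)}_k,E^{(j)}(v)]$ and $[P^{(i)}_k,F^{(j)}(v)]$, which produce geometric series in $v/u$ that resum to the $\theta$-functions on the right-hand side; the shifts by $t^{\mp m_{ij}}$ come from the $t$-dependent terms in \eqref{E:heis-act+}. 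The $E$-$E$, $F$-$F$ and quantum Serre relations are checked using the Wick-type normal ordering
\[
\exp\!\Big({\textstyle\sum_{k\ge 1}}\tfrac{P^{(i)}_{-k}}{[k]_q}(q^{-\frac{1}{2}}u)^k\Big)\exp\!\Big({-\textstyle\sum_{k\ge 1}}\tfrac{P^{(j)}_k}{[k]_q}(q^{\frac{1}{2}}v)^{-k}\Big) = C_{ij}(u,v)\;:\cdots:,
\]
where the scalar contraction $C_{ij}(u,v)$ is computed from \eqref{E:heis1}--\eqref{E:heis3}, and then combined with the sign $(-1)^{\pair{\alb_i^\vee}{\alb_j}}$ from the lattice multiplication and the monomial $u^{\pm\pair{\alb_i^\vee}{\alb_j}}t^{\mp\frac{1}{2}m_{ij}}$ from $u^{P^{(i)}_0}\es^{\pm\alb_j}$; the prefactor $(t^{m_{ij}}u-q^{\pm\pair{\al_i^\vee}{\al_j}}v)$ in the quadratic relations appears exactly as the reciprocal of the singular part of $C_{ij}$. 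The $[E,F]$ relation follows from computing both $E^{(i)}(u)F^{(j)}(v)$ and $F^{(j)}(v)E^{(i)}(u)$, which differ only at the pole $u=q^{\pm 1}v$ so their commutator is supported on $\delta(q^{\pm 1}v/u)$ with residue equal to $K^{(i)}_\pm(q^{\frac{1}{2}}\cdot)$ times $\delta_{ij}/(q-q^{-1})$.

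The main obstacle is not mathematical but bookkeeping: tracking the interplay between (i) the $t$-dependent plethystic shifts in \eqref{E:heis-act+}, which are absent in Saito's original formulation over a symmetric polynomial ring, (ii) the cocycle signs carried by the monomials $\es^{\overline{\beta}}$ defined via \eqref{E:skew-monomial}, and (iii) the $t^{\pm\frac{1}{2}}$ factors in the action of $u^{P^{(i)}_0}$ on $\es^{\alb_j}$ when $j=i\pm 1$. These three contributions must combine to produce exactly the shifts $t^{m_{ij}}$ appearing in the relations; verifying this consistency at each relation is the only nontrivial point beyond Saito's computation. Once performed, the proposition follows.
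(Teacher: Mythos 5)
The paper does not actually prove Proposition~\ref{P:toroidal-vertex}: it is stated as the $p=0$ case of Saito's result, and the only verification the paper offers is the remark that the Heisenberg realization \eqref{E:heis-act+}--\eqref{E:heis-act-} is checked using $[p_k^\perp,p_k]=k$. Your proposal instead sketches a direct verification. Structurally this is the right and standard argument --- check the Heisenberg relations plethystically, then establish the quadratic, Serre, $K$--$E$, $K$--$F$ and $[E,F]$ relations by normal ordering, contraction functions, and delta-function supported commutators --- and it is essentially the computation underlying Saito's proof, so the two routes coincide in content. What your route buys is independence from the translation into Saito's conventions (cf.\ the parameter substitution \eqref{E:change-params} and the extra $t$-dependence in \eqref{E:heis-act+} absent from Saito's Fock space); what it costs is that every relation must actually be computed.

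As written, however, your text is a plan rather than a proof: the contractions $C_{ij}(u,v)$, the residue computation at $u=q^{\pm1}v$ in the $[E,F]$ relation (where one must also check that the lattice factor $\es^{\alb_i}\es^{-\alb_i}u^{1+P^{(i)}_0}v^{1-P^{(i)}_0}$ collapses to $q^{\pm\del_{\alb_i}}$ on that locus), and the consistency of the three sources of $t$-powers you list are asserted, not carried out. One concrete point where care is needed: expanding \eqref{E:heis-act+} literally gives $[P^{(i)}_k,P^{(i\pm1)}_{-k}]=\mp t^{\mp k}(q^k-q^{-k})^2/\bigl(k(q-q^{-1})^2\bigr)$, and reconciling these signs (and the $X^{(i)}t$ versus $X^{(i-1)}t$ term in \eqref{E:heis-act+}) with \eqref{E:heis2}--\eqref{E:heis3} is exactly the kind of bookkeeping where a verification of this sort can silently fail; your phrase ``$(q+q^{-1})-(t+t^{-1})\delta_{\mathrm{adjacent}}$'' glosses over it. So the approach is sound and consistent with the paper's implicit reliance on Saito, but the proposition is not yet established by what you have written.
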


\comment{
\begin{align}
E_i(z) &= \exp\left(\sum_{k\ge 1}H_{i,k} \frac{q-q^{-1}}{q^k-q^{-k}}(q^{-\frac{1}{2}}z)^k\right)\exp\left(-\sum_{k\ge 1}H_{i,-k}\frac{q-q^{-1}}{q^k-q^{-k}}(q^{\frac{1}{2}}z)^{-k}\right)^\perp \es^{\al_i}z^{1+\al_i}\\
\exp\left(\sum_{k\ge 1}\frac{p_k[X^{(i)}]}{k}(q^{-\frac{1}{2}}z)^k\right)\exp\left(-\sum_{k\ge 1}\frac{p_k[(q+q^{-1})X^{(i)}-t^{-1}X^{(i+1)}-tX^{(i)}]}{k}(q^{\frac{1}{2}}z)^{-k}\right)^\perp \es^{\al_i}z^{1+\al_i}
\end{align}
}

\subsection{Connection to quiver currents}
Using \eqref{E:heis-act+} and \eqref{E:heis-act-} we can express the action of $E^{(i)}(q^{\frac{1}{2}}u)$ in Proposition~\ref{P:toroidal-vertex} as follows:
\begin{align*}
&E^{(i)}(q^{\frac{1}{2}}u) \mapsto\\
&\qquad\notag\Omega[uX^{(i)}]\Omega[(1+q^{-2})X^{(i)}-q^{-1}t^{-1}X^{(i+1)}-q^{-1}tX^{(i-1)}]^\perp \otimes \es^{\alb_i}(q^{\frac{1}{2}}u)^{1+P^{(i)}_{0}}.
%
\end{align*}
The connection to the $(q,t)$-quiver currents \eqref{E:qt-current} is then achieved by a simple change of parameters
\begin{align}\label{E:change-params}
\varkappa :\quad q\mapsto q^{-1}t,\quad t\mapsto q^{-1}t^{-1}.
\end{align}
Explicitly, we have
\begin{align}
&\kappa(\overline{H}^{(i,1)}(u)) = (q^{\frac{1}{2}}u)^{-1-P^{(i)}_{0}}\es^{-\alb_i}E^{(i)}(q^{\frac{1}{2}}u).
\end{align}

\comment{
\begin{rem}
Vertex representations of quantum toroidal algebras of type $ADE$ (the corresponding quiver is affine $ADE$), generalizing that of Saito \cite{Sa}, were constructed in \cite{FJW} in the context of representation theory of wreath product groups. See also \cite{CL} for a categorical version of these representations.
\end{rem}
}

\comment{
\begin{align}
F_i(z) &= 
\exp\left(-\sum_{k\ge 1}\frac{p_k[X^{(i)}]}{k}(q^{\frac{1}{2}}z)^k\right)\exp\left(\sum_{k\ge 1}\frac{p_k[(q+q^{-1})X^{(i)}-d^{-1}X^{(i+1)}-dX^{(i)}]}{k}(q^{-\frac{1}{2}}z)^{-k}\right)^\perp \es^{-\al_i}(q^{-\frac{1}{2}}z)^{1-\al_i}
\end{align}
}

\comment{
\begin{align}
K_i^+(z) &= 
\exp\left(\sum_{k\ge 1}\frac{p_k[((q+q^{-1})X^{(i)}-d^{-1}X^{(i+1)}-dX^{(i-1)})(q-q^{-1})]}{k}z^{-k}\right)^\perp q^{-\al_i}\\
&= \Omega[z^{-1}((q+q^{-1})X^{(i)}-d^{-1}X^{(i+1)}-dX^{(i-1)})(q-q^{-1})]^\perp q^{-\al_i}\\
&= \Omega[z^{-1}(q^2-1)((1+q^{-2})X^{(i)}-q^{-1}d^{-1}X^{(i+1)}-q^{-1}dX^{(i-1)})]^\perp q^{-\al_i}
\end{align}

\begin{align}
K_i^-(z) &= \exp\left(-(q-q^{-1})\sum_{k\ge 1} H_{i,-k}z^k\right) q^{\al_i}\\
&= \exp\left(-\sum_{k\ge 1}\frac{p_k[X^{(i)}(q-q^{-1})]}{k}z^k\right) q^{\al_i}\\
&= \Omega[-zX^{(i)}(q-q^{-1})]q^{\al_i}
\end{align}
}

\comment{

\section{Misc.}

\subsection{Shoji's difference operator}

\begin{align}
T_{q,k}^\pm\cdot f &= f|_{x^{(i)}_{k_i}\mapsto q x^{(i\mp 1)}_{k_{i\mp 1}}}
\end{align}

\begin{align}
D^1_\pm(q,t) &= \sum_{\substack{k:Q_0\to[n]}}\left(\prod_{\substack{i\in Q_0\\l\neq k_i}} \frac{tx^{(i\mp 1)}_{k_{i\mp 1}}-x^{(i)}_l}{x^{(i)}_{k_i}-x^{(i)}_l}\right)\prod_{i\in Q_0}T_{q,k}^\pm
\end{align}

\subsection{Questions}

\begin{itemize}
\item Shoji's \cite{Sh} Macdonald functions?
\item Can we identify Shoji's \cite{Sa} Macdonald-like $(q,t)$-difference operator with $\displaystyle\prod_i^{\rightarrow} [z^0]E_i(z)$?
\item In the geometric version of the level $(1,0)$ module, what do natural bases such as fixed point classes correspond to?
\item Are there formulas of Negut, or Nagao \cite{Na}, that help us compute Kostka-Shoji's or something related to Shoji functions?
\item Do single vertex currents help us understand cascading catabolism?
\end{itemize}

\subsection{Quantum toroidal algebra (old)}

Now we assume that $q_{i,i+1}\equiv q$ and $t_{i+1,i}\equiv t$.

\subsubsection{Generating currents}

\begin{align}
E_i(z) &= \Omega\left[zX^{(i)}\right]\Omega\left[-z^{-1}\left(X^{(i)}-qX^{(i+1)}-tX^{(i-1)}+qtX^{(i)}\right)\right]^\perp \es^{\al_i}Z_iz\label{E:def-E}\\
F_i(z) &= \Omega\left[-zX^{(i)}\right]\Omega\left[z^{-1}q^{-1}t^{-1}\left(X^{(i)}-qX^{(i+1)}-tX^{(i-1)}+qtX^{(i)}\right)\right]^\perp \es^{-\al_i}P_iZ_i^{-1}z\\
K_i^-(z) &= \Omega\left[zX^{(i)}(1-(qt)^{-1})\right]\Pb_i\\
K_i^+(z) &= \Omega\left[-z^{-1}(1-(qt)^{-1})(X^{(i)}-qX^{(i+1)}-tX^{(i-1)}+qtX^{(i)})\right]^\perp P_i
\end{align}

\subsubsection{Relations}

\begin{align}
E_i(w)K_{i+1}^-(z) = \Omega\left[\frac{z}{w}q(1-(qt)^{-1})\right]t K_{i+1}^-(z)E_i(w)\\
(zq-w)E_i(w)K_{i+1}^-(z) = (z-wt)K_{i+1}^-(z)E_i(w)
\end{align}

\begin{align}
E_{i+1}(w)K_i^-(z) = \Omega\left[\frac{z}{w}t(1-(qt)^{-1})\right] q K_i^-(z)E_{i+1}(w)\\
(w-tz)E_{i+1}(w)K_i^-(z) =  (wq-z)K_i^-(z)E_{i+1}(w)
\end{align}

\begin{align}
E_i(w)K_i^-(z) = \Omega\left[-\frac{z}{w}(1+qt)(1-(qt)^{-1})\right](qt)^{-1} K_i^-(z)E_i(w)\\
(z-wqt) E_i(w) K_i^-(z) = (zqt-w)K_i^-(z)E_i(w)
\end{align}

\begin{align}
K_i^+(z)E_i(w)qt = \Omega\left[-\frac{w}{z}(1-(qt)^{-1})(1+qt)\right]E_i(w)K_i^+(z)\\
(zqt-w)K_i^+(z)E_i(w) = (z-wqt)E_i(w)K_i^+(z)
\end{align}

\begin{align}
K_i^+(z)E_{i+1}(w)t^{-1} = \Omega\left[\frac{w}{z}(1-(qt)^{-1})q\right]E_{i+1}(w)K_i^+(z)\\
(z-wq)K_i^+(z)E_{i+1}(w) = (zt-w)E_{i+1}(w)K_i^+(z)
\end{align}

\begin{align}
K_{i+1}^+(z)E_i(w)q^{-1} = \Omega\left[\frac{w}{z}(1-(qt)^{-1})t\right] E_i(w)K_{i+1}^+(z)\\
(z-tw)K_{i+1}^+(z)E_i(w) = (qz-w)E_i(w)K_{i+1}^+(z)
\end{align}

\begin{align}
K_i^+(w)K_i^-(z) = \Omega\left[-\frac{z}{w}(1-(qt)^{-1})(1-(qt)^{-1})(1+qt)\right]K_i^-(z)K_i^+(w)\\
(w-z)(qtw-z)K_i^+(w)K_i^-(z) = (w-qtz)(qtw-(qt)^{-1}z)K_i^-(z)K_i^+(w)
\end{align}

\begin{align}
K_{i+1}^+(w)K_i^-(z) &= \Omega\left[\frac{z}{w}t(1-(qt)^{-1})^2\right]K_i^-(z)K_{i+1}^+(w)\\
\frac{1-q^{-2}t^{-1}\frac{z}{w}}{t^{-1}-q^{-1}t^{-1}\frac{z}{w}}K_{i+1}^+(w)K_i^-(z) &= \frac{1-q^{-1}\frac{z}{w}}{t^{-1}-\frac{z}{w}}K_i^-(z)K_{i+1}^+(w)
\end{align}

\begin{align}
K_i^+(w)K_{i+1}^-(z) &= \Omega\left[\frac{z}{w}q(1-(qt)^{-2})^2\right]K_{i+1}^-(z)K_i^+(w)\\
\frac{1-q^{-1}t^{-2}\frac{z}{w}}{q^{-1}-q^{-1}t^{-1}\frac{z}{w}}K_i^+(w)K_{i+1}^-(z) &= \frac{1-t^{-1}\frac{z}{w}}{q^{-1}-\frac{z}{w}}K_{i+1}^-(z)K_i^+(w)
\end{align}

\begin{align}
K_i^\pm(z)K_j^\pm(w) = K_j^\pm(w)K_i^\pm(w)
\end{align}

\begin{align}
K_i^+(w)K_j^-(z) &= K_j^-(z)K_i^+(w) \quad\text{if $|i-j|>1$}
\end{align}

\begin{align}
\Omega\left[\frac{w}{z}q^{-1}t^{-1}(1+qt)\right] F_i(z)F_i(w)z^{-2}(qt)^{-1} &= \Omega\left[\frac{z}{w}q^{-1}t^{-1}(1+qt)\right]F_i(w)F_i(z)w^{-2}(qt)^{-1}\\
(qtw-z)F_i(z)F_i(w) &= (w-qtz)F_i(w)F_i(z)
\end{align}

\begin{align}
\Omega\left[\frac{w}{z}q^{-1}t^{-1}(-q)\right]F_i(z)F_{i+1}(w)zt &= -\Omega\left[\frac{z}{w}q^{-1}t^{-1}(-t)\right]F_{i+1}(w)F_i(z)wq\\
(zt-w)F_i(z)F_{i+1}(w) &= (z-wq)F_{i+1}(w)F_i(z)
\end{align}

\begin{align}
F_i(w)K_i^-(z) = \Omega\left[\frac{z}{w}(1-(qt)^{-1})q^{-1}t^{-1}(1+qt)\right]qtK_i^-(z)F_i(w)\\
(w-z)F_i(w)K_i^-(z) = (qtw-q^{-1}t^{-1}z)K_i^-(z)F_i(w)
\end{align}

\begin{align}
F_i(w)K_{i+1}^-(z) = \Omega\left[-\frac{z}{w}q^{-1}t^{-1}q(1-q^{-1}t^{-1})\right]t^{-1}K_{i+1}^-(z)F_i(w)\\
(tw-q^{-1}t^{-1}z)F_i(w)K_{i+1}^-(z) = (w-t^{-1}z)K_{i+1}^-(z)F_i(w)
\end{align}

\begin{align}
F_{i+1}(w)K_i^-(z) = \Omega\left[-\frac{z}{w}q^{-1}t^{-1}t(1-q^{-1}t^{-1})\right]q^{-1}K_i^-(z)F_{i+1}(w)\\
(qw-q^{-1}t^{-1}z)F_{i+1}(w)K_i^-(z) = (w-q^{-1}z)K_i^-(z)F_{i+1}(w)
\end{align}

\begin{align}
\Omega\left[\frac{w}{z}(1-(qt)^{-1})(1+qt)\right]F_i(w)K_i^+(z) = (qt)^{-1}K_i^+(z)F_i(w)\\
(qtz-w)F_i(w)K_i^+(z) = (z-qtw)K_i^+(z)F_i(w)
\end{align}

\begin{align}
\Omega\left[-\frac{w}{z}(1-q^{-1}t^{-1})t\right]F_i(w)K_{i+1}^+(z) = q K_{i+1}^+(z)F_i(w)\\
(z-tw)F_i(w)K_{i+1}^+(z) = (qz-w)K_{i+1}^+(z)F_i(w)
\end{align}

\begin{align}
\Omega\left[-\frac{w}{z}(1-q^{-1}t^{-1})q\right]F_{i+1}(w)K_i^+(z) = t K_i^+(z)F_{i+1}(w)\\
(z-qw)F_{i+1}(w)K_i^+(z) = (tz-w)K_i^+(z)F_{i+1}(w)
\end{align}

\begin{align}
[E_i(z),F_j(w)] &= 0 \quad\text{for $i\neq j$}
\end{align}
This is clear for $|i-j|>1$. The case $j=i+1$ goes as follows:
\begin{align*}
&E_i(z)F_{i+1}(w) \\
&=\Omega\left[-\frac{w}{z}q\right]\Omega\left[zX^{(i)}-wX^{(i+1)}\right]\Omega\left[-z^{-1}(X^{(i)}-qX^{(i+1)}-\dotsm)+w^{-1}(qt)^{-1}\dotsm\right]^\perp\\
&\qquad\times ze^{\al_i}e^{-\al_{i+1}}Z_i P_{i+1}W_{i+1}^{-1}zw\\
&=(z-qw)\dotsm\\
&F_{i+1}(w)E_i(z)\\
&=\Omega\left[\frac{z}{w}q^{-1}t^{-1}(-t)\right]\left[zX^{(i)}-wX^{(i+1)}\right]\Omega\left[-z^{-1}(X^{(i)}-qX^{(i+1)}-\dotsm)+w^{-1}(qt)^{-1}\dotsm\right]^\perp\\
&\qquad\times e^{-\al_{i+1}}P_{i+1}W_{i+1}^{-1}e^{\al_i}Z_iwz\\
&=(z-qw)\dotsm
\end{align*}
since
\begin{align*}
e^{-\al_{i+1}}P_{i+1}W_{i+1}^{-1}e^{\al_i}Z_i &= -qwe^{\al_i}e^{-\al_{i+1}} Z_iP_{i+1}W_{i+1}^{-1}.
\end{align*}

Let $\delta(z)=\sum_{n\in\Z}z^n$.
\begin{align}
(1-qt)[E_i(z),F_i(w)] &= \delta(w/z)K_i^+(w)-\delta(z/(qtw))K_i^-(z)
\end{align}
\begin{align*}
&E_i(z)F_i(w)\\
&= \Omega\left[\frac{w}{z}(1+qt)\right]\Omega\left[(z-w)X^{(i)}\right]\Omega\left[-z^{-1}(X^{(i)}-qX^{(i+1)}-\dotsm)+w^{-1}(qt)^{-1}\dotsm\right]^\perp\\
&\qquad\times e^{\al_i}e^{-\al_i}Z_i P_i W_i^{-1}z^{-2}zw\\
&= \frac{\frac{w}{z}}{(1-\frac{w}{z})(1-qt\frac{w}{z})}\Omega\left[(z-w)X^{(i)}\right]\Omega\left[-z^{-1}(X^{(i)}-qX^{(i+1)}-\dotsm)+w^{-1}(qt)^{-1}\dotsm\right]^\perp Z_i P_i W_i^{-1}\\
&F_i(w)E_i(z)\\
&= \Omega\left[\frac{z}{w}q^{-1}t^{-1}(1+qt)\right]\Omega\left[(z-w)X^{(i)}\right]\Omega\left[-(z^{-1}-w^{-1}(qt)^{-1})(X^{(i)}-qX^{(i+1)}-\dotsm)\right]^\perp\\
&\qquad\times e^{-\al_i}e^{\al_i}P_iW_i^{-1}Z_iw^{-2}q^{-1}t^{-1}zw\\
&=\frac{q^{-1}t^{-1}\frac{z}{w}}{(1-\frac{z}{w})(1-\frac{z}{w}q^{-1}t^{-1})}\Omega\left[(z-w)X^{(i)}\right]\Omega\left[-z^{-1}(X^{(i)}-qX^{(i+1)}-\dotsm)+w^{-1}(qt)^{-1}\dotsm\right]^\perp Z_iP_iW_i^{-1}
\end{align*}

Using the partial fraction expansions
\begin{align*}
\frac{\frac{w}{z}}{(1-\frac{w}{z})(1-qt\frac{w}{z})} &= \frac{(1-qt)^{-1}\frac{w}{z}}{1-\frac{w}{z}}+\frac{(1-(qt)^{-1})^{-1}\frac{w}{z}}{1-qt\frac{w}{z}}\\
\frac{-q^{-1}t^{-1}\frac{z}{w}}{(1-\frac{z}{w})(1-q^{-1}t^{-1}\frac{z}{w})} &= \frac{(1-qt)^{-1}\frac{z}{w}}{1-\frac{z}{w}}+\frac{-q^{-1}t^{-1}(1-qt)^{-1}\frac{z}{w}}{1-q^{-1}t^{-1}\frac{z}{w}}
\end{align*}
we obtain the following equality of formal distributions:
\begin{align*}
\frac{\frac{w}{z}}{(1-\frac{w}{z})(1-qt\frac{w}{z})}-\frac{q^{-1}t^{-1}\frac{z}{w}}{(1-\frac{z}{w})(1-q^{-1}t^{-1}\frac{z}{w})}=\frac{1}{1-qt}(\delta(w/z)-\delta(z/(qtw))).
\end{align*}
Hence
\begin{align*}
&(1-qt)[E_i(z),F_i(w)]\\
&= (\delta(w/z)-\delta(z/(qtw)))\Omega\left[(z-w)X^{(i)}\right]\Omega\left[-z^{-1}(X^{(i)}-qX^{(i+1)}-\dotsm)+w^{-1}(qt)^{-1}\dotsm\right]^\perp Z_iP_iW_i^{-1}\\
&= \delta(w/z)K_i^+(w)-\delta(z/(qtw))K_i^-(z)
\end{align*}
since $Z_iP_iW_i^{-1}|_{z=qtw}=\Pb_i$. \hfill ({\tt Need to take a representation of $\auxalg$})

\begin{align}
E_i(z_1)E_i(z_2)E_{i+1}(w) &= \Omega\left[-\frac{z_2}{z_1}(1+qt)\right]\Omega\left[\frac{w}{z_1}q\right]\Omega\left[\frac{w}{z_2}q\right]\dotsm\\
\Omega\left[-\frac{z_2}{z_1}(1+qt)\right]\Omega\left[\frac{w}{z_1}q\right]\Omega\left[\frac{w}{z_2}q\right]&=\frac{(1-z_2/z_1)(1-qtz_2/z_1)}{(1-wq/z_1)(1-wq/z_2)}\frac{z_1}{z_2}=\frac{(z_1-z_2)(z_1-qtz_2)}{(z_1-qw)(z_2-qw)}\notag\\
E_i(z_1)E_{i+1}(w)E_i(z_2) &= -\Omega\left[\frac{w}{z_1}q\right]\Omega\left[-\frac{z_2}{z_1}(1+qt)\right]\Omega\left[\frac{z_2}{w}t\right]\dotsm\\
-\Omega\left[\frac{w}{z_1}q\right]\Omega\left[-\frac{z_2}{z_1}(1+qt)\right]\Omega\left[\frac{z_2}{w}t\right]&=-\frac{(1-z_2/z_1)(1-qtz_2/z_1)}{(1-qw/z_1)(1-tz_2/w)}\frac{z_1}{w}=-\frac{(z_1-z_2)(z_1-qtz_2)}{(z_1-qw)(w-tz_2)}\notag\\
E_{i+1}(w)E_i(z_1)E_i(z_2) &= \Omega\left[\frac{z_1}{w}t\right]\Omega\left[\frac{z_2}{w}t\right]\Omega\left[-\frac{z_2}{z_1}(1+qt)\right]\dotsm\\
\Omega\left[\frac{z_1}{w}t\right]\Omega\left[\frac{z_2}{w}t\right]\Omega\left[-\frac{z_2}{z_1}(1+qt)\right] &=\frac{(1-z_2/z_1)(1-qtz_2/z_1)}{(1-tz_1/w)(1-tz_2/w)}\frac{z_1^2}{w^2}=\frac{(z_1-z_2)(z_1-qtz_2)}{(w-tz_1)(w-tz_2)}\notag
\end{align}

Using this one verifies the Serre relations:
\begin{align*}
  & q E_i(z_1)E_i(z_2)E_{i+1}(w) - (qt+1)E_i(z_1)E_{i+1}(w)E_i(z_2) + t E_{i+1}(w)E_i(z_1)E_i(z_2) \\
+ & q E_i(z_2)E_i(z_1)E_{i+1}(w) - (qt+1)E_i(z_2)E_{i+1}(w)E_i(z_1) + t E_{i+1}(w)E_i(z_2)E_i(z_1)=0
\end{align*}

\comment{
\subsection{Quiver Heisenberg algebra}
Let $[k]=(q^k-q^{-k})/(q-q^{-1})$.
\begin{align}
[H_{i,k},H_{j,l}] &= \delta_{k,-l}\frac{[k]\cdot[\pair{\al_i^\vee}{\al_j}k]}{k}
\end{align}

\begin{align}
\ps_k^{(i)} &= \begin{cases}p_{-k}[X^{(i)}]&\text{if $k<0$}\\p_k[X^{(i)}-q_{i,i+1}X^{(i+1)}-t_{i,i-1}X^{(i-1)}+q_{i,i+1}t_{i+1,i}X^{(i)}]^\perp&\text{if $k>0$}\end{cases}
\end{align}

\begin{align}
[p_k^\perp, p_k] = k
\end{align}

For $k>0$,
\begin{align}
\left[\ps_k^{(i)},\ps_{-k}^{(j)}\right] &= \begin{cases}k(1+(q_{i,i+1}t_{i+1,i})^k) &\text{if $i=j$}\\-kt_{i,i-1}^k&\text{if $i-1=j$}\\-kq_{i,i+1}^k&\text{if $i+1=j$}\end{cases}
\end{align}

At $q_{i,i+1}=t_{i-1,i}\equiv q^{-1}$ this is equal to
\begin{align}
\left[\ps_k^{(i)},\ps_{-k}^{(j)}\right]|_{q^{-1}} &= k q^{-k}\frac{[\pair{\al_i^\vee}{\al_j}k]}{[k]}
\end{align}

For $k>0$,
\begin{align}
H_{i,-k} &= \frac{p_k[X^{(i)}(t-q^{-1})]}{k(t-q^{-1})}\\
H_{i,k} &= \frac{p_k[(t-q^{-1})(qX^{(i)}-X^{(i+1)}-qt^{-1}X^{(i-1)}+t^{-1}X^{(i)})]^\perp}{k(t-q^{-1})}
\end{align}

\begin{align}
[H_{i,k},H_{i,-k}] &= \frac{1}{k}\frac{t^k-q^{-k}}{t-q^{-1}}\frac{(qt)^k-(qt)^{-k}}{t-q^{-1}}\\
[H_{i,k},H_{i+1,-k}] &= \frac{1}{k}\frac{t^k-q^{-k}}{t-q^{-1}}\frac{q^{-k}-t^k}{t-q^{-1}}\\
[H_{i,k},H_{i-1,-k}] &= \frac{1}{k}\frac{t^k-q^{-k}}{t-q^{-1}}\frac{t^{-k}-q^k}{t-q^{-1}}
\end{align}

\begin{align}
K_i^-(z)^* &= \exp\left(-(t-q^{-1})\sum_{k\ge 1} H_{i,-k}q^k z^k\right)\dotsm\\
K_i^+(z)^* &= \exp\left((t-q^{-1})\sum_{k\ge 1} H_{i,k}z^{-k}\right)\dotsm\\
E_i(z)^* &= \exp\left(\sum_{k\ge 1} \frac{t-q^{-1}}{t^k-q^{-k}}H_{i,-k}z^k\right)\exp\left(-\sum_{k\ge 1}\frac{t-q^{-1}}{t^k-q^{-k}}H_{i,k}q^{-k}z^{-k}\right)e^{\al_i}z^{\del_i}\\
F_i(z)^* &= \exp\left(-\sum_{k\ge 1}\frac{t-q^{-1}}{t^k-q^{-k}}H_{i,-k}z^k\right)\exp\left(\sum_{k\ge 1}\frac{t-q^{-1}}{t^k-q^{-k}}H_{i,k}t^k z^{-k}\right)e^{-\al_i}z^{-\del_i}
\end{align}
}

\subsection{Matching with Saito}
The following identifications hold at $q=t$, where $q^*=q^{-1}$, $t^*=t^{-1}$:

\begin{center}
\begin{tabular}{|c|c|}
\hline
ours & Saito's \\
\hline
$E_i(z)^*$ & $E_i(z)$\\
\hline
$F_i(z)^*$ & $qF_i(q^{-1}z)$\\
\hline
$K_i^-(z)^*$ & $K_i^-(q^{1/2}z)$\\
\hline
$K_i^+(z)^*$ & $K_i^+(q^{-1/2}z)$\\
\hline
\end{tabular}
\end{center}

\subsection{Quiver $(q,t)$-currents}

\newcommand{\op}{\mathrm{op}}

Let $q_{Q_1}=\{q_{ij} \mid (i,j)\in Q_1\}$ and $t_{Q_1^\op} = \{t_{ji} \mid (j,i)\in Q_1\}$ be two sets of edge parameters, one set for the original quiver and one for its opposite. Equivalently, we can double the original quiver by adding the opposite of each edge.

Let $\K=\Q(q_{Q_1},t_{Q_1^\op})$. Let $A=A(q)$ be the adjacency matrix of $Q$ with $q$-parameters and let $A^\op=A^\op(t)$ be its transpose with $t$-parameters.

Define the $(q,t)$-currents $H_i(z;q,t)\in \End_\K(\Lambda_Q)[[z^{\pm 1}]]$ for $i\in Q_0$ as follows:
\begin{align*}
H_i(z;q,t) &= H_i^\times(z;q,t)\circ H_i^\perp(z;q,t)\\
H_i^\times(z;q,t) &= \Omega\left[zX^{(i)}\right]\\
H_i^\perp(z;q,t) &= \Omega\left[-z^{-1}\left(X^{(i)}-\sum_{j\in Q_0}(A_{ij}+A_{ij}^\op)X^{(j)}+\sum_{j,k\in Q_0} A_{ij}A_{jk}^\op X^{(k)}\right)\right]^\perp.
\end{align*}
We also write $H_i(z)=H_i^\times(z)\circ H_i^\perp(z)$ for brevity. In matrix notation, we have
\begin{align*}
H_i(z) &= \Omega\left[zX^{(i)}\right]\Omega\left[-z^{-1}\mathbf{e}^{(i)}(I-A)(I-A^\op) X^{\bullet}\right]^\perp
\end{align*}
where we regard $X^\bullet=(X^{(i)})_{i\in Q_0}$ as a column vector and we let $\mathbf{e}^{(i)}=(\delta_{ij})_{j\in Q_0}$ denote the $i$-th standard basis row vector. Observe that
\begin{align*}
H_i(z;q)=H_i(z;q,0).
\end{align*}

\subsection{Relations between $(q,t)$-currents}
For any $i\in Q_0$,
\begin{align}\label{E:qt-ii}
&\Omega\left[\frac{w}{z}\left(1-A_{ii}-A_{ii}^\op+\sum_{j\in Q_0}A_{ij}A_{ji}^\op\right)\right]H_i(z;q,t)H_i(w;q,t)\\
&=\Omega\left[\frac{z}{w}\left(1-A_{ii}-A_{ii}^\op+\sum_{j\in Q_0}A_{ij}A_{ji}^\op\right)\right]H_i(w;q,t)H_i(z;q,t).\notag
\end{align}
For any $i,j\in Q_0$ with $i\neq j$,
\begin{align}\label{E:qt-ij}
&\Omega\left[-\frac{w}{z}\left(A_{ij}+A_{ij}^\op-\sum_{k\in Q_0}A_{ik}A^\op_{kj}\right)\right]H_i(z;q,t)H_j(w;q,t)\\
&=\Omega\left[-\frac{z}{w}\left(A_{ji}+A_{ji}^\op-\sum_{k\in Q_0}A_{jk}A^\op_{ki}\right)\right]H_j(w;q,t)H_i(z;q,t).\notag
\end{align}

\section{Cyclic quiver}

We now restrict the cyclic quiver $Q_0=\bZ/r\bZ$, $Q_1=\{(i,i+1) \mid i\in Q_0\}$ for $r\ge 2$ (?). The acyclic subquiver is taken to be $\hat{Q}_1 = Q_1\setminus\{(r-1,0)\}$.

\subsection{Relations between $(q,t)$-currents for the cyclic quiver}
In this case relation \eqref{E:qt-ii} for the $(q,t)$-currents becomes
\begin{align}
&\Omega\left[\frac{w}{z}\left(1+q_{i,i+1}t_{i+1,i}\right)\right]H_i(z)H_i(w)=\Omega\left[\frac{z}{w}\left(1+q_{i,i+1}t_{i+1,i}\right)\right]H_i(w)H_i(z).\notag
\end{align}
or simply
\begin{align}\label{E:qt-ii-cyc}
&z^2(w-q_{i,i+1}t_{i+1,i}z)H_i(z)H_i(w)=-w^2(z-q_{i,i+1}t_{i+1,i}w)H_i(w)H_i(z).
\end{align}

Relation \eqref{E:qt-ij} produces
\begin{align*}
\Omega\left[-\frac{w}{z}q_{i,i+1}\right]H_i(z)H_{i+1}(w) &= \Omega\left[-\frac{z}{w}t_{i+1,i}\right]H_{i+1}(w)H_i(z)
\end{align*}
or simply
\begin{align}\label{E:qt-ij-cyc}
w\left(z-q_{i,i+1}w\right)H_i(z)H_{i+1}(w) &= z\left(w-t_{i+1,i}z\right)H_{i+1}(w)H_i(z).
\end{align}

\subsection{Auxiliary algebra}
Let $\auxalg$ be the $\K[z^{\pm 1}]$-algebra generated by $\K\{Q_\aff\}$ and pairwise mutually commuting elements $Z_i^{\pm 1}$, $P_i^{\pm 1}$, $\Pb_i^{\pm 1}$ for $i\in\Z/r\Z$ satisfying the relations for all $i,j\in\Z/r\Z$:
\begin{align}
Z_iZ_i^{-1} &= Z_i^{-1}Z_i = 1\\
P_i P_i^{-1} &= P_i^{-1} P_i = 1\\
\Pb_i \Pb_i^{-1} &= \Pb_i^{-1} \Pb_i = 1\\
Z_i e^{\al_j} &= z^{\pair{\al_j^\vee}{\al_i}}e^{\al_j}Z_i\label{E:Ze-no-d}\\
P_i e^{\al_j} &= P_{ij}e^{\al_j}P_i\\
\Pb_i e^{\al_j} &= \Pb_{ij}e^{\al_j}\Pb_i.
\end{align}
where $P$ and $\Pb$ are the following matrices:
\begin{align}
P_{ij} &= \begin{cases}(qt)^{-1}&\text{if $i=j$}\\t&\text{if $j=i+1$}\\q&\text{if $j=i-1$}\\0&\text{otherwise} \end{cases}\\
\Pb_{ij} &= P_{ji}^*
\end{align}
where $q^*=q^{-1}$ and $t^*=t^{-1}$. We call $\auxalg$ the {\em auxiliary algebra}.

\subsection{Modified currents}

Define $E_i(z)\in \End_\K(\Lambda_Q)[[z^{\pm 1}]]\otimes_{\K[z^{\pm 1}]} \auxalg$ as follows:
\begin{align}
E_i(z) &= E_i(z;q,t) = H_i(z;q,t)e^{\al_i}Z_iz
\end{align}

Then
\begin{align*}
E_i(z)E_{i+1}(w) &= H_i(z) e^{\al_i} z^{\del_i}H_{i+1}(w) e^{\al_{i+1}}w^{\del_i}\\
&= e^{\al_i}z^{\del_i}e^{\al_{i+1}}w^{\del_{i+1}}H_i(z)H_{i+1}(w)\\
&= e^{\al_i}e^{\al_{i+1}}z^{\del_i}w^{\del_{i+1}}z^{-1}H_i(z)H_{i+1}(w)
\end{align*}
and
\begin{align*}
E_{i+1}(w)E_i(z) &= H_{i+1}(w) e^{\al_{i+1}}w^{\del_{i+1}} H_i(z) e^{\al_i}z^{\del_i}\\
&= e^{\al_{i+1}}w^{\del_{i+1}}e^{\al_i}z^{\del_i}H_{i+1}(w)H_i(z)\\
&= -e^{\al_i}e^{\al_{i+1}}z^{\del_i}w^{\del_{i+1}}w^{-1}H_{i+1}(w)H_i(z)
\end{align*}
Therefore, relation \eqref{E:qt-ij-cyc} can be rewritten as
\begin{align}
\left(z-q_{i,i+1}w\right)z^{-1}H_i(z)H_{i+1}(w) &= \left(w-t_{i+1,i}z\right)w^{-1}F_{i+1}(w)F_i(z)\notag\\
\left(z-q_{i,i+1}w\right)E_i(z)E_{i+1}(w) &= \left(t_{i+1,i}z-w\right)E_{i+1}(w)E_i(z)
\end{align}

Now consider
\begin{align*}
E_i(z)E_i(w) &= H_i(z) e^{\al_i} z^{\del_i}H_i(w) e^{\al_i}w^{\del_i}\\
&= e^{\al_i}e^{\al_i}z^{\del_i}w^{\del_i}z^2 H_i(z)H_i(w).
\end{align*}
We obtain the following version of \eqref{E:qt-ii-cyc}:
\begin{align}
(w-q_{i,i+1}t_{i+1,i}z)z^2 H_i(z)H_i(w)&=(q_{i,i+1}t_{i+1,i}w-z)w^2 H_i(w)H_i(z)\notag\\
(w-q_{i,i+1}t_{i+1,i}z)E_i(z)E_i(w)&=(q_{i,i+1}t_{i+1,i}w-z)E_i(w)E_i(z).
\end{align}

These relations for the currents $E_i(z)$ match \cite[A.1]{Sa}, where the parameters $q,\kappa$ relate to ours as follows: $q_{i,i+1}\equiv t_{i+1,i}\equiv q^{-1}$ and $\kappa=1$. It matches \cite[\S1]{VV} in the following way: $q_{i,i+1}\equiv q$, $t_{i+1,i}\equiv t$, and $x_i^+(z)=E_i(z^{-1})$.

}


\begin{thebibliography}{XXXX}

\bibitem[ADK]{ADK}
S. Abeasis, A. Del Fra, and H. Kraft.
The geometry of representations of $A_m$. 
Math. Ann. {256} (1981), no. 3, 401--418.

\bibitem[AH]{AH} P. Achar and A. Henderson. Orbit closures in the enhanced nilpotent cone. Adv. Math. {219} (2008), no. 1, 27--62. 

\bibitem[Be]{Be}
J. Beck.
Braid group action and quantum affine algebras.
Comm. Math. Phys. 165 (1994), no. 3, 555--568.

\bibitem[BGLX]{BGLX}
F. Bergeron, A. Garsia, E. Leven, and G. Xin.
Some remarkable new plethystic operators in the theory of Macdonald polynomials.
J. Comb. 7 (2016), no. 4, 671--714.

\bibitem[BF]{BF}
R. Bezrukavnikov, and M. Finkelberg.
Wreath Macdonald polynomials and the categorical McKay correspondence. 
With an appendix by Vadim Vologodsky. 
Camb. J. Math. 2 (2014), no. 2, 163--190. 

\bibitem[Bro]{Bro}
B. Broer.
Line bundles on the cotangent bundle of the flag variety. 
Invent. Math. 113 (1993), no. 1, 1--20.

\bibitem[Bry]{B}
R. Brylinski. Limits of weight spaces, Lusztig's $q$-analogs, and fiberings of adjoint orbits.
J. Amer. Math. Soc. 2 (1989), no. 3, 517--533.


\bibitem[CG]{CG} N. Chriss and V. Ginzburg. Representation theory and complex geometry. 
Reprint of the 1997 edition. Modern Birkh\"{a}user Classics. Birkh\"{a}user Boston, Inc., Boston, MA, 2010. 

\bibitem[Cr]{Cr} W. Craig. Pictures for A2 Kostka-Shoji numbers.
Undergraduate Thesis---Virginia Tech (2018).

\bibitem[FJMM]{FJMM}
B. Feigin, M. Jimbo, T. Miwa, and E. Mukhin.
Representations of quantum toroidal $\mathfrak{gl}_n$.
J. Algebra 380 (2013), 78--108. 

\bibitem[FO]{FO}
B. L. Feigin and A. V. Odesskii.
Quantized moduli spaces of the bundles on the elliptic curve and their applications. Integrable structures of exactly solvable two-dimensional models of quantum field theory (Kiev, 2000), 123--137, 
NATO Sci. Ser. II Math. Phys. Chem., 35, Kluwer Acad. Publ., Dordrecht, 2001.

\bibitem[FT]{FT}
B. L. Feigin and A. I. Tsymbaliuk.
Equivariant $K$-theory of Hilbert schemes via shuffle algebra.
Kyoto J. Math. 51 (2011), no. 4, 831--854. 

\bibitem[FGT]{FGT}
M. Finkelberg, V. Ginzburg, and R. Travkin. Mirabolic affine Grassmannian and character sheaves. Selecta Math. (N.S.) 14 (2009), no. 3-4, 607--628. 

\bibitem[FI]{FI} M. Finkelberg and A. Ionov. Kostka-Shoji polynomials and Lusztig's convolution diagram. Bulletin of the Institute of Mathematics Academia Sinica (New Series) 13 (2018), no.~1, 31--42.


\bibitem[G]{G} 
A. Garsia. Orthogonality of Milne's polynomials and raising operators. 
Discrete Math. {99} (1992), no. 1-3, 247--264.

\bibitem[GHT]{GHT}
A. M. Garsia, M. Haiman, and G. Tesler.
Explicit plethystic formulas for Macdonald $q,t$-Kostka coefficients.
The Andrews Festschrift (Maratea, 1998). 
Sém. Lothar. Combin. 42 (1999), Art. B42m, 45 pp

\bibitem[GKV]{GKV}
V. Ginzburg, M. Kapranov, and E. Vasserot.
Langlands reciprocity for algebraic surfaces. 
Math. Res. Lett. 2 (1995), no. 2, 147--160.

\bibitem[GN]{GN}
E. Gorsky and A. Negut.
Refined knot invariants and Hilbert schemes.
J. Math. Pures Appl. (9) 104 (2015), no. 3, 403--435. 

\bibitem[Gr]{Gr}
I. Grojnowski.
Affinizing quantum algebras: from $D$-modules to $K$-theory.
Unpublished manuscript (1994).  

\bibitem[Hai]{Hai}
M. Haiman.
Combinatorics, symmetric functions, and Hilbert schemes.
Current developments in mathematics, 2002, 39--111, Int. Press, Somerville, MA, 2003.

\bibitem[Har]{Ha}
R. Hartshorne.
Algebraic geometry. 
Graduate Texts in Mathematics, No. 52. Springer-Verlag, New York-Heidelberg, 1977.

\bibitem[Hu]{H}
Y. Hu. 
Higher cohomology vanishing of line bundles on generalized Springer's resolution.
arXiv:1704.07947 

\bibitem[J]{J} N. Jing. Vertex operators and Hall-Littlewood symmetric functions. 
Adv. Math. {87} (1991), no. 2, 226--248.

\bibitem[KS]{KS} 
M. Kontsevich and Y. Soibelman.
Cohomological Hall algebra, exponential Hodge structures and motivic Donaldson-Thomas invariants. 
Commun. Number Theory Phys. 5 (2011), no. 2, 231--352. 

\bibitem[KSS]{KSS} A. N. Kirillov, A. Schilling, and M. Shimozono.
A bijection between Littlewood-Richardson tableaux and rigged configurations. Selecta Math. (N.S.) 8 (2002), no. 1, 67--135.

\bibitem[KSh]{KiSh} A. Kirillov and M. Shimozono. 
A generalization of the Kostka-Foulkes polynomials. 
J. Algebraic Combin. 15 (2002), no. 1, 27--69. 

\bibitem[LLM]{LLM} L. Lapointe, A. Lascoux, and J. Morse. 
Tableau atoms and a new Macdonald positivity conjecture. Duke Math. J. 116 (2003), no. 1, 103--146.

\bibitem[LM]{LM} L. Lapointe and J. Morse.
Schur function identities, their $t$-analogs, and $k$-Schur irreducibility. Adv. Math. 180 (2003), no. 1, 222--247.

\bibitem[Las]{Las} A. Lascoux.
Cyclic permutations on words, tableaux and harmonic polynomials, 
in: Proc. of the Hyderabad Conference on Algebraic Groups, 1989, Manoj Prakashan, Madras, 1991, 323--347.

\bibitem[LS]{LS}
A. Lascoux and M.-P. Sch\"{u}tzenberger.
Sur une conjecture de H. O. Foulkes.
C. R. Acad. Sci. Paris Sér. A-B {286} (1978), no. 7, A323--A324. 

\bibitem[LSh]{ShL} S. Liu and T. Shoji. Double Kostka polynomials and Hall bimodule. Tokyo J. Math. 39 (2017), no. 3, 743--776. 

\bibitem[LR]{LR} N. Loehr and J. B. Remmel.
A computational and combinatorial exposé of plethystic calculus.
J. Algebraic Combin. {33} (2011), no. 2, 163--198. 

\bibitem[L1]{L:nullcone}
G. Lusztig. 
Green polynomials and singularities of unipotent classes.
Adv. Math. 42 (1981) 169--178.

\bibitem[L2]{L:conv}
G. Lusztig. Quivers, perverse sheaves, and quantized enveloping algebras. J. Amer. Math. Soc. {4} (1991), no. 2, 365--421. 

\bibitem[Mac]{Mac}
I. G. Macdonald. Symmetric functions and Hall polynomials. Second edition. With contributions by A. Zelevinsky. Oxford Mathematical Monographs. Oxford Science Publications. The Clarendon Press, Oxford University Press, New York, 1995.

\bibitem[Nag]{Na}
K. Nagao.
$K$-theory of quiver varieties, $q$-Fock space and nonsymmetric Macdonald polynomials.
Osaka J. Math. 46 (2009), no. 3, 877--907.

\bibitem[Nak]{N}
H. Nakajima. 
Quiver varieties and finite-dimensional representations of quantum affine algebras. 
J. Amer. Math. Soc. 14 (2001), no. 1, 145--238.

\bibitem[Ne1]{Ne-rev}
A. Negut.
The shuffle algebra revisited.
Int. Math. Res. Not. IMRN 2014, no. 22, 6242--6275. 

\bibitem[Ne2]{Ne}
A. Negut.
Quantum Algebras and Cyclic Quiver Varieties. 
Thesis (Ph.D.)--Columbia University. 2015.

\bibitem[OS]{OS:unpublished}
D. Orr and M. Shimozono.
Quiver Hall-Littlewood functions and Kostka-Shoji polynomials. arXiv:1704.05178v1

\bibitem[P]{P}
D. I. Panyushev. 
Generalised Kostka-Foulkes polynomials and cohomology of line bundles on homogeneous vector bundles.
Selecta Math. (N.S.) 16 (2010), no. 2, 315--342.

\bibitem[R]{R}
M. Reineke.
Quivers, desingularizations and canonical bases.
Studies in memory of Issai Schur (Chevaleret/Rehovot, 2000), 325--344, 
Progr. Math., {210}, Birkhäuser Boston, Boston, MA, 2003.

\bibitem[Sa]{Sa}
Y. Saito. Quantum toroidal algebras and their vertex representations.
Publ. Res. Inst. Math. Sci. 34 (1998), no. 2, 155--177. 

\bibitem[Sch]{Sch}
O. Schiffmann.
Quivers of type $A$, flag varieties and representation theory.
Representations of finite dimensional algebras and related topics in Lie theory and geometry, 453--479, 
Fields Inst. Commun., {40}, Amer. Math. Soc., Providence, RI, 2004.

\bibitem[SchV]{SV}
O. Schiffmann and E. Vasserot.
The elliptic Hall algebra and the $K$-theory of the Hilbert scheme of $\mathbb{A}^2$.
Duke Math. J. 162 (2013), no. 2, 279--366.

\bibitem[ScW]{ScW} 
A. Schilling and S. O. Warnaar.
Inhomogeneous lattice paths, generalized Kostka polynomials and $A_{n-1}$ supernomials. Comm. Math. Phys. 202 (1999), no. 2, 359--401. 

\bibitem[S1]{S:poset} M. Shimozono. A cyclage poset structure for Littlewood-Richardson tableaux. European J. Combin. 22 (2001), no. 3, 365--393.

\bibitem[S2]{S:multi} M. Shimozono. Multi-atoms and monotonicity of generalized Kostka polynomials. European J. Combin. 22 (2001), no. 3, 395--414.

\bibitem[S3]{S:affine} M. Shimozono. Affine type A crystal structure on tensor products of rectangles, Demazure characters, and nilpotent varieties. J. Algebraic Combin. 15 (2002), no. 2, 151--187. 

\bibitem[SW]{SW} M. Shimozono and J. Weyman. Graded characters of modules supported in the closure of a nilpotent conjugacy class.  European J. Combin. 21 (2000), no. 2, 257--288. 

\bibitem[SZ1]{SZ} M. Shimozono and M. Zabrocki.
Hall-Littlewood vertex operators and generalized Kostka polynomials.
Adv. Math. {158} (2001), no. 1, 66--85. 

\bibitem[SZ2]{SZ2} M. Shimozono and M. Zabrocki.
Deformed universal characters for classical and affine algebras.
J. Algebra {299} (2006), no. 1, 33--61. 

\bibitem[Sh1]{Sh} T. Shoji. Green Functions Associated to Complex Reflection Groups. J. Algebra {245} (2001), 650--694.

\bibitem[Sh2]{Sh-mac}
T. Shoji. Macdonald functions associated to complex reflection groups. Special issue celebrating the 80th birthday of Robert Steinberg. J. Algebra 260 (2003), no. 1, 426--448. 

\bibitem[Sh3]{Sh2} T. Shoji. Green functions attached to limit symbols. Representation theory of algebraic groups and quantum groups, 443--467, 
Adv. Stud. Pure Math., {40}, Math. Soc. Japan, Tokyo, 2004. 

\bibitem[Sh4]{Sh3} T. Shoji. Kostka functions associated to complex reflection groups and a conjecture of Finkelberg-Ionov. Sci. China Math. 61 (2018), no. 2, 353--384.

\bibitem[T]{T}
A. Tsymbaliuk.
Several realizations of Fock modules for toroidal $\ddot{U}_{q,d}(\mathfrak{sl}_n)$. arXiv:1603.08915

\bibitem[VV]{VV}
M. Varagnolo and E. Vasserot.
On the $K$-theory of the cyclic quiver variety.
Internat. Math. Res. Notices 1999, no. 18, 1005--1028.

\bibitem[W]{W} J. Weyman.
The equations of conjugacy classes of nilpotent matrices.
Invent. Math., 98 (1989), 229--245.

\bibitem[YZ]{YZ}
Y. Yang and G. Zhao.
The cohomological Hall algebra of a preprojective algebra.
Proc. Lond. Math. Soc., to appear. arXiv:1407.7994


\end{thebibliography}
\end{document}